%% file: prelim_final.tex
\documentclass[11pt,reqno]{amsart}

\usepackage[dvipsnames]{xcolor}
\usepackage{comment}

\newtheorem{theorem}{Theorem}[section]
\newtheorem{proposition}[theorem]{Proposition}
\newtheorem{lemma}[theorem]{Lemma}
\newtheorem{corollary}[theorem]{Corollary}

\newtheorem{assumption}[theorem]{Assumption}
\theoremstyle{definition}
\newtheorem{definition}[theorem]{Definition}
\newtheorem{remark}[theorem]{Remark}
\newtheorem{example}[theorem]{Example}

\numberwithin{equation}{section}

\definecolor{ReferenceGreen}{HTML}{1F6B4F}
\definecolor{CitationRed}{HTML}{A23B3B}
\definecolor{URLBlue}{HTML}{315E85}

\usepackage[
  colorlinks=true,
  linkcolor=ReferenceGreen, 
  citecolor=CitationRed,    
  urlcolor=URLBlue,
  pdfborder={0 0 0}
]{hyperref}

\usepackage[
  nameinlink,
  capitalize,
  noabbrev
]{cleveref}

\usepackage[
  left=27mm,
  right=27mm,
  top=30mm,
  bottom=32mm
]{geometry}

\usepackage{newtxtext,newtxmath} 

\usepackage{parskip}
\input{commands.tex}

\title{Conditioned Brownian motion and local equivalence of path ensembles}
\author{Tobias Schmidt}
\address{Department of Mathematics, TU Darmstadt, Germany.}
\email{tobias.schmidt@tu-darmstadt.de}
\date{\today}

\begin{document}

\begin{abstract}
    We study Brownian motion in $\mathbb R^d$ conditioned so that the time average of a continuous confining potential remains below a fixed level. On every fixed initial time interval, we prove that the conditioned process converges in total variation to the ground-state diffusion associated with a suitable Schrödinger operator. We also obtain sharp asymptotics for the probability of the conditioning event, including bounded perturbations of the constraint. The proof is based on a local limit theorem for the corresponding Feynman-Kac measures. Our results extend the previously known one-dimensional quadratic case to arbitrary finite dimension and a broad class of confining potentials, therefore resolving a conjecture of Aurzada, Lifshits and Schickentanz. 
    The presented approach also works when Brownian motion is replaced by suitable reversible Markov processes, including multidimensional Ornstein-Uhlenbeck processes, CIR processes and continuous-time Markov chains.
\end{abstract}
\maketitle
\section{Introduction and Main Results}

Fix $d\in\mathbb N$, and let $W=(W_t)_{t\geq0}$ be standard Brownian motion in $\mathbb R^d$ with generator $\Delta/2$, started at $x\in\mathbb R^d$. The corresponding probability measure on $C([0,\infty);\mathbb R^d)$ is denoted by $\mathbb P_x$. In case $x=0$ we abbreviate $\mathbb P=\mathbb P_0$. We denote the filtration of the corresponding Brownian motion by $(\mathcal F_t)_{t\ge0}$. Put
\[
 A_T =\int_0^T \de s \, Q(W_s),
\]
where one should intuitively think of $Q\ge 0$ as a confining potential growing at infinity.
In this paper, we are interested in studying Brownian motion conditioned on the event $A_T \le \theta T$ for some $\theta >0$; the corresponding law will be denoted by $\cL_x\bigl((W_t)_{0\leq t\leq S}\mid A_T\leq\theta T\bigr)$. 
Note that the event we condition on is rare.  Indeed, unconditioned Brownian motion explores spatial
scales of order $\sqrt T$, whereas the constraint forces its time-averaged
potential energy to remain of order one.  Since the constraint depends on the
whole path up to time $T$, the conditioned path measure is not Markovian.  Nevertheless, we prove that on every fixed initial time window, a Markovian limit emerges.

 In \cite{ALS}, the authors derive a limit for $\cL_x\bigl((W_t)_{0\leq t\leq S}\mid A_T\leq\theta T\bigr)$ for the specific case $Q(x) = x^2$ in $d=1$.  Using exact small-deviation formulae for
Gaussian quadratic functionals, they showed that the conditioned process
converges weakly to an Ornstein-Uhlenbeck process.  They further conjectured
that, for general confining potentials, the limit is the ground-state
diffusion associated with a suitable Schr\"odinger operator; see
\cite[Conjecture~12]{ALS}.
We prove this conjecture for a broad class of continuous
confining potentials and also strengthen the proposed conclusion in three
directions.  First, on every fixed time window, the convergence holds in total
variation.  Second, we obtain a sharp asymptotic formula for the probability of the conditioning event. Third, we show that the conjecture holds in arbitrary dimensions.

There is also a statistical-mechanical motivation for studying the conditional path law just introduced.  The conditioning
$A_T\leq\theta T$ is a one-sided microcanonical constraint, while
\[
    \frac{\mathrm d\widehat{\mathbb P}^{\beta,T}_x}
         {\mathrm d\mathbb P_x}
    =\frac{1}{Z_T(\beta,x)}\mathrm e^{-\beta A_T},
    \qquad
    Z_T(\beta,x)=\mathbb E^{\prob_x}[\mathrm e^{-\beta A_T}], \footnote{In case $T$ is clear from the context, we will abbreviate $Z(\beta,x) = Z_T (\beta,x)$.} \qquad \beta >0
\]
is the corresponding canonical ensemble.   The two are linked by the exact identity (obtained via the layer-cake representation)
\[
  \mathbb E^{\prob_x}\!\left[\mathrm e^{-\beta A_T}\mathbf 1_F\right]
  =\int_0^\infty \mathrm dy \,\beta \mathrm e^{-\beta y}
      \mathbb P_x(F,A_T\leq y),
      \qquad F\in\mathcal F_T.
\]
Consequently, the canonical law is a mixture of hard-conditioned laws.  We will find that at
large $T$, the mixing measure concentrates around
$y=\lambda'(\beta)T$ on a window of order $\sqrt T$. Here, $\lambda(\beta)$ is the smallest eigenvalue of an associated Schrödinger operator.  This observation alone only yields an averaged form of ensemble equivalence: convergence of the
mixture does not identify the conditional law at the single value
$y=\theta T$.  The local limit theorem proved in this work (see Theorem \ref{thm:llt}) will allow us to undo the mixture and obtain precise information at $y=\theta T$.

The same distinction arises in nonequilibrium statistical mechanics. Broadly speaking, this area studies systems that are not in thermodynamic equilibrium. As an example, consider systems driven by external forces, coupled to reservoirs at different temperatures, transporting particles or energy, or relaxing towards equilibrium \cite{Zwanzig2001,Jack2020}. In stochastic models of such systems, one often asks for the behavior of paths on which a quantity accumulated over time, such as the number of transitions, an integrated current, entropy production, or the time-integrated potential  $A_T \ge 0 $ takes an atypical value \cite{Touchette2018,Jack2020}.
Direct conditioning on an event such as  $\{ A_T\leq\theta T \} $ is generally difficult to analyze. A standard alternative is to replace the original path law by the probability measure obtained by assigning to each path a weight proportional to  $\mathrm e^{-\beta A_T} $, which we call in this paper Feynman-Kac weight. In the physics literature, such measures are called exponentially biased trajectory ensembles, or  $s $-ensembles \cite{LecomteEtAl2007,JackSollich2010,Touchette2018}. 
Equivalence of ensembles is the principle that, for parameters  $\beta $ and  $\theta $ related in the appropriate way, the exponentially biased and the hard-conditioned laws select the same typical long-time behavior \cite{ChetriteTouchettePRL,ChetriteTouchetteAHP,Touchette2011}. This idea has been used to study systems under shear and the slow relaxation of glasses, where transitions may occur between paths with many and few changes of configuration \cite{GarrahanEtAl2007,HedgesEtAl2009,JackSollich2010}. Although the Brownian setting considered here is simpler than the interacting, driven systems typically studied in nonequilibrium statistical mechanics, we obtain precise local equivalence of path ensembles. Our argument extends to a class of reversible Markov processes; see Section \ref{sec:examples}.

The canonical law is useful for another reason: it naturally suggests the Markov process that should describe the conditioned paths. Indeed, the Feynman-Kac semigroup
\[
    f\longmapsto
    \mathbb{E}^{\bbP_x}\!\left[\mathrm e^{-\beta A_t}f(W_t)\right]
\]
loses mass and is therefore not itself a Markov semigroup. Its positive ground state  $\phi_\beta $, however, can be used to transform it into a conservative Markov semigroup. This construction is called the ground-state transform, or the generalized Doob transform in the physics literature \cite{ChetriteTouchettePRL,Garrahan2016}. Under the transformed process, the paths favored by the weight  $\mathrm e^{-\beta A_T} $ become typical in the long-time limit. Consequently, its modified drift or transition rates describe the change in the original dynamics required to sustain the unusual value of  $A_T $ \cite{JackSollich2010,JackSollich2015}. The transformed process also provides a dynamics from which such paths can be simulated without waiting for the rare event to occur under the original law \cite{CauserEtAl2021}.

In systems with many interacting components, the limiting principal eigenvalue or its eigenstate can change abruptly as the bias parameter varies; see e.g. \cite{BeSchSe24}. This signals a qualitative change in the typical biased paths, for example, from a highly active regime to an almost inactive one, and is called a dynamical phase transition \cite{LecomteEtAl2007,GarrahanEtAl2007,Jack2020}. In some models, the tilted operator can also be rewritten as a (quantum) Hamiltonian, so that this phenomenon is related to a ground-state phase transition of the associated quantum system \cite{JackSollich2010}. In our Brownian setting, the transformed process is precisely the diffusion with drift  $\nabla\log\phi_\beta $ and invariant density  $\phi_\beta^2 $, and Theorem~\ref{thm:main} shows that it also describes the fixed-window limit under the hard constraint.

We focus on confining potentials  $Q $, meaning that
 
 $Q(x)\to\infty $ as  $\lvert x\rvert\to\infty $ in $\mathbb R^d$. The constraint
 $A_T\leq \theta T $ then forces the Brownian path to spend an
atypically large proportion of time in a bounded region. At the
logarithmic scale, this localization can be described using the
Donsker-Varadhan theory of large deviations for occupation measures
\cite{DV75a,DV75b}. 
Large deviations theory alone is, however, not sufficiently precise
for the questions considered here. 
It therefore gives neither the  $T^{-1/2} $ prefactor
in Theorem \ref{thm:main} nor the effect of replacing  $\theta T $ by
 $\theta T+z $ with  $z=O(1) $. More importantly, it does not provide
the relative asymptotics of probabilities started from different
points that are needed to identify the conditional law on a fixed
time window. Obtaining this finer information is precisely the role
of the local limit theorem proved below.
We now state our precise assumptions on the potential $Q$.

\begin{assumption}\label{def:classQ}
We say that $Q$ belongs to $\cQ$ if
\begin{enumerate}
  
 \item $Q:\mathbb R^d \to [0,\infty)$ is continuous and is not constant;
 \item for some $p,c,R>0$, \label{bp:ass_2}
       \[
       Q(x)\geq c|x|^p \qquad \text{for $|x| \ge R$}.
       \]
\end{enumerate}
\end{assumption}
Note that $Q$ is locally bounded by continuity and thus locally integrable over $\mathbb R^d$. Define $q_* = \min_{x\in\mathbb R^d}Q(x)$. Moreover, let $\phi_\beta$ be the normalized positive $L^2(\mathbb R^d)$-function that solves
\[
    -\frac12\Delta\phi_\beta+\beta Q\phi_\beta
    =\lambda(\beta)\phi_\beta,
\]
where $\lambda(\beta)$ is the smallest eigenvalue of $H_\beta=-\Delta/2+\beta Q$. All of these objects will be introduced again in detail in Section \ref{sec:main_sec}.

The rate function governing the lower tail of  $A_T/T $ is
\[
  I_Q(\theta)
  =
  \sup_{\beta\geq 0}
  \bigl\{
    \lambda(\beta)-\beta\theta
  \bigr\},
  \qquad \theta>q_*,
\]
where we set  $\lambda(0)=0 $. Equivalently, the
Donsker-Varadhan and Rayleigh-Ritz variational formulae give
\[
  I_Q(\theta)
  =
  \inf
  \left\{
    \frac{1}{2}
    \int_{\mathbb{R}^d} \de x \,
      \lvert\nabla f(x)\rvert^2
    :
      f\in H^1(\mathbb{R}^d), \lVert f\rVert_2=1,
      \int_{\mathbb{R}^d} \de x \, Q(x)|f(x)|^2\leq\theta
  \right\};
\]
see \cite{DV75a,DV75c}.
As shown in Lemma~\ref{lem:range-constraint-coupling}, for every  $\theta>q_* $ the supremum is
attained at the unique  $\beta_\theta>0 $ satisfying
$\lambda'(\beta_\theta)=\theta$.
The corresponding variational optimizer is
 $\phi_{\beta_\theta} $ 
and
\[
  I_Q(\theta)
  =
  \lambda(\beta_\theta)-\beta_\theta\theta
  =
  \frac{1}{2}
    \int_{\mathbb{R}^d} \de x \,
    \lvert\nabla\phi_{\beta_\theta}(x)\rvert^2.
\]
Although this identifies the typical occupation under the
rare event, it does not by itself determine the local conditional
dynamics. For the canonical Feynman-Kac tilt, however, the appropriate
dynamics are suggested through its ground-state transform: it is the reversible
diffusion with invariant probability measure  $\phi_{\beta_\theta}^2(x) \, \de x $, namely
\[
  \de X_t
  =
  \de B_t
  +\nabla\log\phi_{\beta_\theta}(X_t)\,\de t.
\]
Theorem~\ref{thm:main} shows that this canonical candidate is indeed the
fixed-window limit of the microcanonically conditioned Brownian
motion. The fact that the asymptotic is locally uniform is not relevant for the conclusion that the conditioned process asymptotically solves the given SDE. 
\begin{theorem}
\label{thm:main}
Let $Q\in\cQ$, $x\in\mathbb R^d$, and $\theta>q_*$.  Let
$\beta=\beta_\theta$ be the unique solution of
$\lambda'(\beta)=\theta$.  Then $\sigma_\theta^2 = - \lambda''(\beta_\theta)>0$, and for every fixed
$z\in\R$,
\begin{equation}
\label{equ:precise_asymptotics}
 \prob_x(A_T\leq \theta T+z)
 \sim
 \frac{\phi_{\beta_\theta}(x)\Vert\phi_{\beta_\theta}\Vert_1}
      {\beta_\theta\sqrt{2\pi\sigma_\theta^2T}}
 \exp(-I_Q(\theta)T+\beta_\theta z).
\end{equation}
The asymptotic is locally uniform in $(x,z)$.
In particular, for every $S<\infty$,
\begin{equation}
    \nonumber
 \cL_x\bigl((W_t)_{0\leq t\leq S}\mid A_T\leq\theta T\bigr)
 \longrightarrow
 \cL_x\bigl((X_t)_{0\leq t\leq S}\bigr),
\end{equation}
in total variation (TV), where
\[
 \de X_t=\de B_t+\nabla\log\phi_{\beta_\theta}(X_t)\,\de t,
 \qquad X_0=x.
\]
Here the SDE is understood in the weak sense through the
ground-state transform described in
Proposition~\ref{prop:ground-state-transform-rigorous}.
\end{theorem}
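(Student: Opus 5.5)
The proof goes through an exponential change of measure (Cramér/Bahadur–Rao type) combined with a local limit theorem for $A_T$ under the Feynman--Kac tilt, and then a Markov-property argument for the fixed-window statement.

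\textbf{Step 1: spectral input.} Since $Q\in\cQ$ is confining, $H_\beta=-\Delta/2+\beta Q$ has compact resolvent. Its ground state $\lambda(\beta)$ is simple with a spectral gap, and $\phi_\beta>0$. Analytic perturbation theory (Kato) makes $\beta\mapsto\lambda(\beta)$ real-analytic and concave on $(0,\infty)$, with $\lambda'(\beta)=\langle\phi_\beta,Q\phi_\beta\rangle$. Strict concavity, i.e.\ $\sigma_\theta^2>0$, follows from the second-order perturbation formula
\[
\lambda''(\beta)=-2\sum_{k\ge1}\frac{|\langle\phi_k,Q\phi_\beta\rangle|^2}{\lambda_k-\lambda}.
\]
This sum vanishes only if $Q\phi_\beta=c\phi_\beta$, i.e.\ only if $Q$ is constant, which is excluded. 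Together with Lemma~\ref{lem:range-constraint-coupling} this gives $\beta_\theta$.

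\textbf{Step 2: tilted law of $A_T$.} The semigroup expansion gives
\[
Z_T(\beta,x)=\mathbb E^{\mathbb P_x}[e^{-\beta A_T}]=e^{-\lambda(\beta)T}\phi_\beta(x)\|\phi_\beta\|_1\,(1+O(e^{-\gamma T})),
\]
using that $\phi_\beta\in L^1$ by Agmon-type decay. I extend this to complex parameters $\beta+is$. For $|s|$ small, perturbation theory gives an isolated eigenvalue $\lambda(\beta+is)$. For $|s|\ge\delta$, I need $\|e^{-tH_{\beta+is}}\|<e^{-(\lambda(\beta)+\eta)t}$, uniformly on compacts in $s$. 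Under the tilted measure $\widehat{\mathbb P}^{\beta,T}_x$, the variable $A_T$ then has characteristic function
\[
\exp\bigl(-T[\lambda(\beta+is)-\lambda(\beta)]\bigr)\cdot(\text{bounded analytic factor}).
\]

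\textbf{Step 3: local limit theorem (Theorem~\ref{thm:llt}).} This step gives uniform asymptotics for $\widehat{\mathbb P}(A_T-\theta T\in[a,b])$, with Gaussian density of variance $\sigma_\theta^2T$. Writing
\[
\mathbb P_x(A_T\le\theta T+z)=Z_T\,\widehat{\mathbb E}\bigl[e^{\beta(A_T)}\mathbf 1\{A_T\le\theta T+z\}\bigr]
\]
and integrating the local density against $e^{\beta y}$ for $y\le\theta T+z$, I get the factor $e^{\beta z}/(\beta\sqrt{2\pi\sigma^2T})$. Combining with the asymptotics of $Z_T$ and $\lambda(\beta)-\beta\theta=I_Q(\theta)$ yields \eqref{equ:precise_asymptotics}. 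Local uniformity in $(x,z)$ follows from the uniform error terms.

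\textbf{Main obstacle.} I expect the hard part to be the non-lattice estimate for $|s|\ge\delta$, and also uniformity up to $|s|\to\infty$ if a smoothing argument is used. The plan is to show that $H_{\beta+is}$ has no spectrum with real part $\le\lambda(\beta)$. I would argue from $|\mathbb E[e^{-(\beta+is)A_t}f(W_t)]|\le\mathbb E[e^{-\beta A_t}|f|(W_t)]$, with equality forcing $sA_t$ to be a.s.\ constant modulo $2\pi$ on paths. This is impossible since $Q$ is nonconstant and continuous. A compactness and quasi-compactness argument then gives a strict gap on compact $s$-sets. I would then use Stone's smoothing-kernel version of the LLT, which needs only compact $s$-ranges.

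\textbf{Step 4: fixed-window TV convergence.} For $F\in\mathcal F_S$, the Markov property gives
\[
\mathbb P_x(F\mid A_T\le\theta T)=\frac{\mathbb E_x\bigl[\mathbf 1_F\, g_{T-S}(W_S,\theta T-A_S)\bigr]}{g_T(x,\theta T)},\qquad g_t(y,r)=\mathbb P_y(A_t\le r).
\]
Apply \eqref{equ:precise_asymptotics} with horizon $T-S$ and $z=\theta S-A_S$. The density ratio then converges pointwise, on compacts in $(W_S,A_S)$, to
\[
\frac{\phi_\beta(W_S)}{\phi_\beta(x)}\exp\bigl(\lambda(\beta)S-\beta A_S\bigr),
\]
which is exactly the ground-state martingale density of Proposition~\ref{prop:ground-state-transform-rigorous} defining $X$. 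Both the density ratios and the limit have mean one, so Scheffé's lemma upgrades pointwise convergence to $L^1(\mathbb P_x|_{\mathcal F_S})$ convergence, i.e.\ TV convergence. Tightness of $(W_S,A_S)$ handles the complement of compacts.
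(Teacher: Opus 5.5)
Your overall architecture is the paper's: an analytic branch near zero frequency, a uniform gap on compact frequency annuli, Stone smoothing, untilting with $h(y)=\mathrm e^{\beta y}\mathbf 1_{\{y\le 0\}}$, then the Markov property and Scheff\'e. Your second-order sum for $\lambda''$ is just the spectral form of the paper's identity $\lambda''(\beta)=-2b_\beta[u_\beta,u_\beta]$.

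The genuine deviation is the strict gap for $s\neq0$, and there the equality case is misstated. Suppose $H_{\beta+is}\psi=\zeta\psi$ with $\Re\zeta=\lambda(\beta)$. Pairing $\mathrm e^{-t\lambda(\beta)}|\psi|\le \mathrm e^{-tH_\beta}|\psi|$ with $\phi_\beta>0$ does give $|\psi|=\phi_\beta$. But equality in the triangle inequality, combined with the eigenvalue equation, only yields
\[
 sA_t\equiv\alpha(W_t)-\alpha(x)+t\,\Im\zeta \pmod{2\pi}\quad\mathbb P_x\text{-a.s.},\qquad \mathrm e^{i\alpha}=\psi/\phi_\beta ,
\]
not that $sA_t$ is a.s.\ constant modulo $2\pi$. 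The phase of the non-self-adjoint eigenfunction at the endpoint enters. So a non-lattice property of the law of $A_t$ alone gives no contradiction; you need information on $A_t$ given the endpoint.

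One way to close this is as follows. First, $\psi/\phi_\beta$ is continuous, since the Feynman--Kac semigroup maps $L^2$ into continuous functions. Hence $\mathrm e^{i(sA_t-\alpha(W_t))}$ is a continuous functional on path space that is a.s.\ constant, and by full support of Wiener measure it is constant on every continuous path started at $x$. On paths from $x$ to a fixed $y$, $A_t$ fills a nondegenerate interval because $Q$ is nonconstant, which contradicts $s\neq0$.

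The paper sidesteps all of this analytically. By the diamagnetic inequality and Rayleigh--Ritz, equality forces $\int\phi_\beta^2|\nabla q|^2=0$ for $q=\psi/\phi_\beta$. So the phase is constant, and the eigenvalue equation then forces $Q$ to be constant. Your repaired route is the path-space dual of this argument (semigroup domination is equivalent to the diamagnetic inequality), but it needs continuity of eigenfunctions and a support theorem. The paper's route is shorter and is the one reused in Section~\ref{sec:examples} via $\mathcal E(|f|,|f|)\le\mathcal E(f,f)$.

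A smaller omission: the expansion $Z_T(\beta,x)=\mathrm e^{-\lambda(\beta)T}\phi_\beta(x)\|\phi_\beta\|_1(1+O(\mathrm e^{-\gamma T}))$ does not follow directly from the $L^2$ semigroup expansion. The reason is that $\mathbf 1\notin L^2(\mathbb R^d)$ and point evaluation is not $L^2$-bounded. The paper splits $[0,T]$ into pieces of length $\tau,T-2\tau,\tau$ (Lemma~\ref{lem:real-boundary-smoothing}). Confinement puts $g_\tau$ into $L^2$, and heat-kernel domination bounds $f\mapsto(\mathrm e^{-\tau H_\beta}f)(x)$. The same device, with complex Feynman--Kac domination, is needed for $Z_T(\beta+is,x)$.

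Finally, the tightness remark in Step~4 is unnecessary: the density ratio converges for every fixed path, which is all Scheff\'e's lemma requires.
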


\begin{remark}
    The proposed limiting diffusion is  also known as the ground state transform, or $P(\phi)_1$ process associated with $H_{\beta_\theta}$ \cite{Simon79,LHB20}. Moreover, the fact that there exists $\beta_\theta >0$ such that $\lambda' (\beta_\theta) = \theta$ is part of the claim; see also Lemma \ref{lem:range-constraint-coupling}.
\end{remark}
The resulting quantities in Theorem \ref{thm:main} can be computed somewhat explicitly in special cases.
\begin{example}
    Let $Q(x)=|x|^p$ for $p>0$, $x\in\mathbb R^d$, and let $e_{p,d}$ be the smallest eigenvalue of $-\frac12\Delta+|x|^p$.
Scaling the system with the unitary operator 
\[U_a : L^2(\bbR^d)\to L^2(\bbR^d), \qquad (U_af)(x) = a^{d/2}f(ax)\]
gives $\lambda(\beta)=e_{p,d}\beta^{2/(p+2)}$.
For every $\theta>0$, there is a unique
\begin{equation}
 \beta_\theta
 =\left(\frac{2e_{p,d}}{(p+2)\theta}\right)^{(p+2)/p}
  \nonumber
\end{equation}
such that $\lambda'(\beta_\theta)=\theta$.  Moreover,
\begin{equation}
 I_p(\theta)
 =\frac{p}{p+2}e_{p,d}
  \left(\frac{2e_{p,d}}{(p+2)\theta}\right)^{2/p}
  \nonumber
\end{equation}
and
\begin{equation}
 \sigma_{p,\theta}^2
 =\frac{2p}{(p+2)^2}e_{p,d}
  \beta_\theta^{-2(p+1)/(p+2)}.
  \nonumber
\end{equation}
If $\phi_1$ is the normalized ground state at coupling one, then
\begin{align}
    \nonumber
 \phi_\beta(x)
 &=\beta^{d/(2(p+2))}
   \phi_1\bigl(\beta^{1/(p+2)}x\bigr),
\end{align}
and so
\[
 \nabla\log\phi_\beta(x)
 =\beta^{1/(p+2)}
   \nabla\log\phi_1\bigl(\beta^{1/(p+2)}x\bigr).
\]
Inserting $\beta_\theta^{1/(p+2)} = (2e_{p,d}/ [(p+2)\theta])^{1/p}$ yields that the resulting diffusion is
\[\de X_t = \de B_t + a_\theta \nabla \log \phi_1(a_\theta X_t) \de t, \qquad a_\theta = \left( \frac{2e_{p,d}}{(p+2)\theta}\right)^{1/p}.\]
\end{example}
 Theorem \ref{thm:main} solves the conjecture raised in \cite{ALS}, which was stated in $d=1$. Moreover, Theorem \ref{thm:main} also verifies that the microcanonical ensemble, the canonical ensemble and the ground state transform locally and asymptotically describe the same process, which yields an equivalence of ensembles.
 
 The locally uniform tail asymptotics of Theorem~\ref{thm:main} can also be used to treat fixed-width microcanonical windows by
taking differences.  Although the leading probability depends on the location and width of the window, this dependence cancels from the conditional path law, so every fixed window produces the same ground-state diffusion. This is another verification that fixing a specific `energy' for the system corresponds locally and asymptotically to the ground state transform/the Gibbsian reweighting.
\begin{corollary}
\label{cor:fixed-width-window}
Let $Q\in\mathcal Q$, $x\in\mathbb R^d$, and $\theta>q_*$, and use the
notation of Theorem~\ref{thm:main}.  For fixed $a<b$ we find
\[
\mathbb P_x(\theta T+a<A_T\leq \theta T+b)
\sim
\frac{\phi_{\beta_\theta}(x)\|\phi_{\beta_\theta}\|_1}
     {\sqrt{2\pi\sigma_\theta^2T}}\,
\mathrm e^{-I_Q(\theta)T}
\int_a^b \de y \, \mathrm e^{\,\beta_\theta y} .
\]
The asymptotic is locally uniform in
\[
(x,a,b)\in
\bbR^d \times\bigl\{(a,b)\in\mathbb R^2:a<b\bigr\}.
\]
Moreover, for every $S<\infty$,
\[
\mathcal L_x\bigl((W_t)_{0\leq t\leq S}\mid \theta T+a<A_T\leq \theta T+b\bigr)
\xrightarrow[T\to\infty]{\mathrm{TV}}
\mathcal L_x\bigl((X_t)_{0\leq t\leq S}\bigr),
\]
where $X$ is the ground-state diffusion as in Theorem~\ref{thm:main}.  In particular,
the limiting process is independent of the choice of $a<b$.
\end{corollary}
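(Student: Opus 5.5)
The plan is to deduce everything from the locally uniform tail asymptotics \eqref{equ:precise_asymptotics} of Theorem~\ref{thm:main} by taking differences, and then to rerun the Markov-property argument of Theorem~\ref{thm:main} with the window in place of the one-sided event.

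\textbf{Window asymptotics.} Write $C_T(x)=\phi_{\beta_\theta}(x)\|\phi_{\beta_\theta}\|_1/\sqrt{2\pi\sigma_\theta^2T}\,\mathrm e^{-I_Q(\theta)T}$. Then
\[
\mathbb P_x(\theta T+a<A_T\le\theta T+b)=\mathbb P_x(A_T\le\theta T+b)-\mathbb P_x(A_T\le\theta T+a).
\]
By Theorem~\ref{thm:main}, the two terms equal $C_T(x)\beta_\theta^{-1}\mathrm e^{\beta_\theta b}(1+o(1))$ and $C_T(x)\beta_\theta^{-1}\mathrm e^{\beta_\theta a}(1+o(1))$, with $o(1)$ uniform on compacts in $(x,z)$. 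The difference is therefore
\[
C_T(x)\beta_\theta^{-1}\bigl(\mathrm e^{\beta_\theta b}-\mathrm e^{\beta_\theta a}\bigr)+o\bigl(C_T(x)\mathrm e^{\beta_\theta b}\bigr).
\]
Note that $\beta_\theta^{-1}(\mathrm e^{\beta_\theta b}-\mathrm e^{\beta_\theta a})=\int_a^b\mathrm e^{\beta_\theta y}\,\mathrm dy$, which is the claimed main term.

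The difficulty is local uniformity as $b-a\downarrow0$. The error is $o(\mathrm e^{\beta_\theta b})$, while the main term is of order $(b-a)\mathrm e^{\beta_\theta b}$, so the error is not relatively small when $b-a$ is tiny. On compact subsets of $\{a<b\}$, however, $b-a$ is bounded below, so the ratio of error to main term is $o(1)/(1-\mathrm e^{-\beta_\theta(b-a)})\to0$ uniformly. This settles local uniformity in the open set $\{a<b\}$. If the author intends uniformity up to the diagonal, one would instead invoke the local limit theorem (Theorem~\ref{thm:llt}) directly for the density of $A_T$. I will rely only on the difference argument.

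\textbf{TV convergence.} Fix $S$ and $F\in\mathcal F_S$. By the Markov property at time $S$,
\[
\mathbb P_x(F,\ \theta T+a<A_T\le\theta T+b)=\mathbb E_x\Bigl[\mathbf 1_F\,g_T\bigl(W_S,A_S\bigr)\Bigr],
\]
where
\[
g_T(w,s)=\mathbb P_w\bigl(\theta(T-S)+a+\theta S-s<A_{T-S}\le\theta(T-S)+b+\theta S-s\bigr).
\]
Apply the window asymptotics with $T$ replaced by $T-S$ and shifted endpoints $a',b'$ satisfying $b'-a'=b-a$. Using $\sqrt{T-S}\sim\sqrt T$, this gives, locally uniformly in $(w,s)$,
\[
\frac{g_T(w,s)}{\mathbb P_x(\theta T+a<A_T\le\theta T+b)}\to\frac{\phi_{\beta_\theta}(w)}{\phi_{\beta_\theta}(x)}\,\mathrm e^{\lambda(\beta_\theta)S-\beta_\theta s}=:M_S.
\]
The factor $\int_{a'}^{b'}\mathrm e^{\beta_\theta y}\,\mathrm dy=\mathrm e^{\beta_\theta(\theta S-s)}\int_a^b\mathrm e^{\beta_\theta y}\,\mathrm dy$ cancels in the ratio, and the exponent combines as $I_Q(\theta)S+\beta_\theta\theta S=\lambda(\beta_\theta)S$. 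Here $M_S$, evaluated at $(W_S,A_S)$, is exactly the ground-state martingale density from Proposition~\ref{prop:ground-state-transform-rigorous}, so its expectation is one and it defines $\mathcal L_x(X|_{[0,S]})$.

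\textbf{Main obstacle.} Since the densities converge pointwise, Scheffé's lemma gives TV convergence once the normalized densities $D_T$ converge in $L^1(\mathbb P_x)$, and each $D_T$ has expectation one. Pointwise convergence holds because $(W_S,A_S)$ lies in a compact set with probability close to one, and there the convergence is uniform. Scheffé then yields $L^1$ convergence, because $\mathbb E_x D_T=1=\mathbb E_x M_S$ and $D_T\ge0$. So no separate tightness bound is needed, and the same reduction as in Theorem~\ref{thm:main} applies verbatim. Independence of the limit from $(a,b)$ is immediate, since $M_S$ does not involve $a$ or $b$.
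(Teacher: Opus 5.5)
Your proposal is correct and follows essentially the same route as the paper: take differences of the locally uniform tail asymptotics from Theorem~\ref{thm:main}, then apply the Markov property at time $S$ with horizon $T-S$ and shifted endpoints $a+\theta S-A_S$, $b+\theta S-A_S$, and conclude with Scheff\'e's lemma against the ground-state density $M_S$. Your bound on the relative error by $o(1)\bigl(1+\mathrm e^{-\beta_\theta(b-a)}\bigr)/\bigl(1-\mathrm e^{-\beta_\theta(b-a)}\bigr)$ on compact subsets of $\{a<b\}$ makes explicit what the paper condenses into ``locally uniform as long as $a<b$''. Note also that pointwise convergence of the densities holds for every fixed path directly, since $(W_S,A_S)$ is then a fixed point, so the remark about a compact set of high probability is not needed.
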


As noted earlier, we derive Theorem \ref{thm:main} by verifying a local limit theorem (LLT) for the family of path measures
\begin{equation}
    \label{equ:FK_path_measures}
    \widehat\bbP_x ^{\beta,T} (\de W)= \frac{1}{Z(\beta,x)} \exp \lt( -\beta \int_0 ^T \de s \,Q(W_s)\rt)\bbP_x (\de W).
\end{equation}

\begin{theorem}[Local limit theorem]
\label{thm:llt}
Fix $\beta >0$.
Let $Y_T=A_T-\theta_\beta T$ with $\theta_\beta = \lambda'(\beta)$ and $\sigma^2_\beta = - \lambda''(\beta)$.  For every directly Riemann-integrable
$h:\mathbb R\to\mathbb R$, every compact $K\subset\mathbb R^d$, and every $M<\infty$,
\begin{equation}
    \nonumber
 \sup_{\substack{x\in K\\|a|\leq M}}
 \left|
  \sqrt T\,\bbE^{\bbPh^{\beta,T}_x}[h(Y_T-a)]
  -\frac1{\sqrt{2\pi\sigma_\beta^2}}
   \int_\R \de y \, h(y)
 \right|
 \longrightarrow0.
\end{equation}
In particular,
\begin{equation}
\nonumber
 \sqrt T\bbE^{\bbPh^{\beta,T}_x}[h(Y_T)]
 \longrightarrow
 \frac1{\sqrt{2\pi\sigma_\beta^2}}
 \int_\bbR \de y \, h(y),
\end{equation}
locally uniformly in $x$.
\end{theorem}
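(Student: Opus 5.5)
We prove the local limit theorem by Fourier analysis of the characteristic function of $Y_T$ under $\widehat{\mathbb P}^{\beta,T}_x$. We first reduce to nice test functions and then control the characteristic function with spectral perturbation theory for the complex-coupled Schrödinger operators $H_{\beta-i u}=-\Delta/2+(\beta-iu)Q$.

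\textbf{Reduction.} By the standard Stone/Breiman argument, it suffices to prove the claim for $h$ in a class that sandwiches indicators of intervals. A convenient class consists of $h\in L^1(\mathbb R)$ whose Fourier transform $\widehat h$ is continuous with compact support, for instance translates and modulations of Fejér kernels. For such $h$, Fourier inversion gives
\[
\sqrt T\,\mathbb E^{\widehat{\mathbb P}^{\beta,T}_x}[h(Y_T-a)]
=\frac{\sqrt T}{2\pi}\int_{\mathbb R}\mathrm du\,\widehat h(u)\,\mathrm e^{-iua}\,\mathrm e^{-iu\theta_\beta T}\,\frac{Z_T(\beta-iu,x)}{Z_T(\beta,x)},
\]
where $Z_T(\beta-iu,x)=\mathbb E^{\mathbb P_x}[\mathrm e^{-(\beta-iu)A_T}]$. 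Passing from this class to directly Riemann-integrable $h$ is routine once the claim holds uniformly in the shift $a$: approximate indicators of small intervals from above and below, then use Riemann sums. Only local uniformity in $a$ is needed, which is why the statement includes $|a|\le M$.

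\textbf{Spectral input.} Since $Q\ge c|x|^p$ at infinity, $H_\beta$ has compact resolvent and a simple isolated ground state $\lambda(\beta)$ with spectral gap $g>0$. The family $u\mapsto H_{\beta-iu}$ is a holomorphic family of type (B) in the form sense, because $Q$ is relatively form-bounded with relative bound $0$ with respect to $-\Delta/2+\beta Q$. Kato perturbation theory therefore gives, for $|u|\le\delta$, a simple eigenvalue $\lambda(\beta-iu)$ analytic in $u$ and a rank-one projection $P_u$. The Feynman-Kac representation then yields
\[
Z_T(\beta-iu,x)=\mathrm e^{-\lambda(\beta-iu)T}\bigl(\phi_{\beta-iu}(x)\langle \tilde\phi_{\beta-iu},1\rangle+O(\mathrm e^{-gT/2})\bigr),
\]
locally uniformly in $x$. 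Because $\phi_\beta\in L^1$ (Gaussian-type decay), $1$ may be paired with $\phi_\beta$. The pointwise evaluation at $x$ is handled by writing $\mathrm e^{-tH}=\mathrm e^{-H}\mathrm e^{-(t-2)H}\mathrm e^{-H}$, which uses $L^1\to L^2\to L^\infty$ smoothing of the Feynman-Kac semigroup at unit time. Taylor expansion gives
\[
\lambda(\beta-iu)=\lambda(\beta)-iu\theta_\beta-\tfrac{u^2}{2}\sigma_\beta^2+O(u^3),
\]
with $\sigma_\beta^2=-\lambda''(\beta)>0$. Strict concavity of $\lambda$ comes from the second-order perturbation formula $\lambda''=-2\langle Q\phi,(H-\lambda)^{-1}(Q-\langle Q\phi,\phi\rangle)\phi\rangle$, which is strictly negative because $Q$ is nonconstant. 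Substituting $u=v/\sqrt T$, the region $|u|\le\delta$ contributes $\frac{1}{2\pi}\widehat h(0)\int\mathrm e^{-\sigma_\beta^2v^2/2}\,\mathrm dv=\widehat h(0)/\sqrt{2\pi\sigma_\beta^2}$ by dominated convergence. The phase $\mathrm e^{-iua}$ tends to $1$ uniformly for $|a|\le M$, and the ratio of prefactors tends to $1$ uniformly on $K$.

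\textbf{Main obstacle: large frequencies.} The hardest step is the region $\delta\le|u|\le U$, where $U$ bounds the support of $\widehat h$. There we need
\[
|Z_T(\beta-iu,x)|\le C\,\mathrm e^{-(\lambda(\beta)+\eta)T}
\]
for some $\eta>0$, so that after multiplication by $\sqrt T$ the contribution vanishes. The plan is to show that the spectral radius of the contraction $\mathrm e^{-H_{\beta-iu}}$ is strictly smaller than $\mathrm e^{-\lambda(\beta)}$ for $u\ne0$. The pointwise domination $|\mathrm e^{-tH_{\beta-iu}}f|\le\mathrm e^{-tH_\beta}|f|$ gives the bound $\le\mathrm e^{-\lambda(\beta)}$. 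Equality would force an eigenfunction $\psi$ with $|\psi|=\phi_\beta$ and equality in the triangle inequality in the Feynman-Kac integral. That in turn forces $\mathrm e^{iu\int_0^1 Q(W_s)\,\mathrm ds}$ to be a.s. of the form $\mathrm e^{ic}\,\overline{\omega(W_0)}\,\omega(W_1)$ for a phase function $\omega$. Since $Q$ is continuous and nonconstant, such a cocycle relation is impossible for Brownian paths, for example by comparing two paths with the same endpoints but different occupation of a region where $Q$ differs, both of which have positive probability in every neighbourhood. The operator $\mathrm e^{-H_{\beta-iu}}$ is compact, so its spectral radius is attained at an eigenvalue, and the strict inequality follows. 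Continuity of the spectral radius in $u$ on the compact set $[\delta,U]$ then gives a uniform gap $\eta>0$. Combining the two frequency regions proves the uniform statement, and the "in particular" claim is the case $a=0$.
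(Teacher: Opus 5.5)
Your overall architecture matches the paper's. Both arguments use Fourier inversion on band-limited test functions, Kato perturbation of the simple ground state near $u=0$ with the $\tau,\,T-2\tau,\,\tau$ endpoint splitting, exponential decay on compact frequency annuli, and the Stone sandwich with a Riemann-sum passage (Lemma~\ref{lem:fourier-estimates} fed into Lemma~\ref{lem:stone-smoothing}).

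The genuine difference is how you exclude nonzero Fourier modes. The paper (Lemma~\ref{lemma:strict_spectral_gap}) works with eigenpairs of the generator. It shows $\Re\zeta=\mathfrak h_\beta[\psi]\ge\mathfrak h_\beta[|\psi|]\ge\lambda(\beta)$ by the diamagnetic inequality and Rayleigh--Ritz. Equality forces $|\psi|=\phi_\beta$ and, through $|\nabla\psi|^2=|\nabla\phi_\beta|^2+\phi_\beta^2|\nabla q|^2$, a constant phase. Uniformity on annuli then comes from compactness of $\mathcal D\hookrightarrow L^2$, and semigroup bounds from spectral mapping plus Lemma~\ref{lem:spectral-to-semigroup}.

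You instead work with the compact time-one operator $S_u(1)$. Feynman--Kac domination gives $r(S_u(1))\le\mathrm e^{-\lambda(\beta)}$. Pairing a peripheral eigenvector with $\phi_\beta$ gives $|\psi|=c\phi_\beta$. Equality in $|\mathbb E Z|\le\mathbb E|Z|$ then makes $\mathrm e^{iuA_1}$ a function of the endpoints, which path-space support rules out. This is the classical aperiodicity argument for Fourier kernels. It avoids the diamagnetic inequality, the $H^1_{\mathrm{loc}}$ phase calculus and the spectral mapping theorem. Its rigidity step needs only positivity of the kernel, a positive Perron vector and a support property of paths, not a variational principle. The price is reliance on the pointwise complex Feynman--Kac formula (Theorem~\ref{thm:complex-fk-app}) and on path-space topology. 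The paper's form-level argument, by contrast, uses only $\mathcal E(|f|,|f|)\le\mathcal E(f,f)$ and its equality case, which is why Section~\ref{sec:examples} can reuse it almost verbatim.

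\textbf{Points to tighten.}

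(i) Two paths with different occupation do not by themselves contradict $u\int_0^1Q(W_s)\,\de s\in c+2\pi\mathbb Z$, since their values may differ by a multiple of $2\pi/u$. Use instead that $\gamma\mapsto\int_0^1Q(\gamma_s)\,\de s$ is continuous on the path-connected set of continuous paths from $x$ to $y$ (the support of the bridge). It must therefore be constant there, and paths lingering near an arbitrary point then force $Q$ to be constant.

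(ii) Use upper semicontinuity of the spectral radius along the norm-continuous family $u\mapsto S_u(1)$; recall $\norm{S_u(1)-S_v(1)}_{2\to2}\le|u-v|/(\mathrm e\beta)$. This gives a uniform bound on $r(S_u(1))$. However, a bound $\norm{S_u(t)}_{2\to2}\le C\mathrm e^{-(\lambda(\beta)+\eta)t}$ with $C$ independent of $u$ still needs the finite-cover and block-iteration argument of Lemma~\ref{lem:spectral-to-semigroup}. The same argument is needed near $u=0$ for $S_u(t)(I-P_u)$, which Kato's theorem alone does not give.

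(iii) The right endpoint factor must send $\mathbf 1\in L^\infty$ into $L^2$. This holds because $|\mathbb E^{\bbP_x}[\mathrm e^{-(\beta-iu)A_1}]|\le\mathbb E^{\bbP_x}[\mathrm e^{-\beta A_1}]$ decays by the growth of $Q$ (Lemma~\ref{lem:real-boundary-smoothing}). It does not follow from an $L^1\to L^2$ bound, since $\mathbf 1\notin L^1$. This route also spares you from giving meaning to $\langle\tilde\phi_{\beta-iu},\mathbf 1\rangle$.

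(iv) The Riemann-sum tails need $\sup_{b\in\mathbb R}\sqrt T\,\bbPh^{\beta,T}_x(Y_T\in[b,b+d])<\infty$ over all shifts, not only $|a|\le M$. Your Fourier bounds supply this, because a shift contributes only a unimodular phase.

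Finally, $Q$ has relative form bound $1/\beta$, not $0$, with respect to $H_\beta$. That is still enough for type (a).
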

\subsection{Proof overview}
\label{sec:proof_idea}
We now give a high-level overview of our proposed approach and how a LLT for the family $\bbPh^{\beta,T}_x$ yields Theorem \ref{thm:main}. Compared to the proof in \cite{ALS}, we are not in the situation where small ball probabilities are given in closed-form, and explicit calculations are not possible. Instead, we rely on spectral theory for Schrödinger operators. As a first step, we replace the microcanonical conditioning on the set $\{ A_T \le \theta T \}$ by a soft constraint. Indeed, we study the probability measures $\bbPh^{\beta,T}_x$ introduced in equation \eqref{equ:FK_path_measures}. This is very convenient, because this measure is the path measure corresponding to the Schrödinger operator $H_\beta = - \Delta/2 + \beta Q$. By our standing assumptions introduced in Assumption \ref{def:classQ}, $H_\beta$ has discrete spectrum due to the confining nature of $Q$. This allows us to study the Fourier transform of the random variable $A_T$ under $\bbPh^{\beta,T}$ by means of perturbation theory. Indeed, note the identity 
\begin{equation}
    \nonumber
    \Theta_{\beta,T}(u) = \bbE^{\bbPh^{\beta,T}}[\exp(iuA_T)] = \frac{Z(\beta-iu,0)}{Z(\beta,0)}.
\end{equation}
Let $\lambda(\beta) = \inf \spec H_\beta$ and assume that  $H_{\beta-iu}= -\Delta/2 + (\beta-iu)Q$ corresponds in an appropriate sense to the respective (complex) Feynman-Kac weight.
If one believes in the heuristic that $Z(\beta-iu,x)=[\exp(-TH_{\beta - iu}) \mathbf 1](x)$ behaves asymptotically as $\exp(-T\lambda(\beta - iu))$ and that $z \to \lambda(z)$ is analytic around a small neighborhood of $\beta$, then a simple Taylor expansion yields
\[\Theta_{\beta,T}(u) = \exp \Big(  T \big( iu \lambda'(\beta) + \frac{u^2}{2} \lambda''(\beta)+O(|u|^3) \big) \Big).\]
After shifting $A_T$ to be centered at leading order, meaning that we  study the random variable  
\[Y_T = A_T - \lambda'(\beta)T\]
and putting $u = v/ \sqrt{T}$, we see that a CLT is to be expected for $Y_T$. This is, however, insufficient for our purposes. First, the CLT is derived for the tilted measure, which is not our original interest. Second, having a CLT does not provide enough information to determine ratios of half-ray probabilities
$\bbP_x( A_T \le \theta T + z)$, which is what determines the limiting process. We solve this problem by verifying a LLT for $Y_T$ under $\bbPh^{\beta_\theta,T}$ (see Theorem \ref{thm:llt}), where $\beta_\theta>0$ is chosen such that $A_T - \theta T$ is centered at leading order (i.e. $\lambda '(\beta_\theta)= \theta$). By applying the LLT (Theorem \ref{thm:llt}) with the function
\[h(y) = \mathbf 1_{y \le z }\exp(\beta y),\] 
 we can `untilt' $\bbPh^{\beta,T}$ in order to verify \eqref{equ:precise_asymptotics}. 
A local limit theorem is natural here for the same reason that it is natural
for additive functionals of 
a continuous, stationary
process.  Under the canonical
tilt, the bulk of a long path is governed by the positive recurrent
ground-state diffusion, and $A_T$ is an additive functional of that process.
One therefore expects Gaussian fluctuations, together with a local limit
theorem provided that no nonzero Fourier mode remains coherent.
The spectral proof provided in this paper makes this intuition precise.  Near $u=0$, another expansion of $\lambda$ at $\beta$ gives
\[
  \lambda(\beta-iu)
  =\lambda(\beta)-iu\lambda'(\beta)
     +\frac12\sigma_\beta^2u^2+O(|u|^3),
  \qquad \sigma_\beta^2=-\lambda''(\beta)>0.
\]
After centering, the linear term cancels, and the ground-state mode therefore
converges to the Gaussian Fourier transform on the scale
$u=O(T^{-1/2})$.  The ordinary spectral gap of $H_\beta$ suppresses the
remaining spectral subspace near zero frequency.  For a local limit theorem,
however, this ordinary gap is not sufficient: one must also exclude coherent
nonzero Fourier modes.  One main ingredient, Lemma \ref{lemma:strict_spectral_gap}, provides exactly this input by showing that, for every $u\neq0$,
\[
   \inf\{\Re\zeta:
             \zeta\in\operatorname{spec}(H_{\beta-iu})\}
       >\lambda(\beta),
\]
with a uniform gap when $u$ ranges over a compact annulus
$\delta\leq|u|\leq R$.
There is a simple physical interpretation of this strict inequality.
The imaginary potential $-iuQ$ rotates the phase at a rate depending on the
position.  Equality of the real spectral energy with $\lambda(\beta)$ would,
by the diamagnetic and Rayleigh-Ritz inequalities, force the modulus of the
corresponding eigenfunction to equal $\phi_\beta$ and its phase to be
constant, as position-dependent phase shifts cost kinetic energy.  The eigenvalue equation would then force $Q$ to be constant,
contrary to our assumptions. Fourier modes close to zero can be controlled by analytic perturbation theory (cf. Proposition \ref{prop:analytic-band}).  Thus, every nonzero Fourier mode decays exponentially.  Fourier
inversion then leaves only the
Gaussian zero-frequency mode and yields Theorem~\ref{thm:llt}.

\subsection{Related work}

Our starting point is the work of Aurzada, Lifshits, and Schickentanz
\cite{ALS}.  In one dimension, for $Q(x)=x^2$, they combine exact
Gaussian small-deviation asymptotics with a disintegration argument to obtain
the Ornstein-Uhlenbeck limit.  Their proof exploits formulae special to
quadratic functionals.  The present work proves their general-potential
conjecture, along with a sharp half-line asymptotic.

Sharp large-deviation asymptotics beyond the logarithmic scale have been
studied in several general frameworks.  In
\cite{ChagantySethuraman}, the authors established `strong' large-deviation and local limit
theorems for general sequences of random variables, while  \cite{KontoyiannisMeyn} developed spectral exact large-deviation
asymptotics for additive functionals of geometrically ergodic Markov
processes; see also \cite{KusuokaTamura} for precise
estimates of Donsker-Varadhan type in case the Markov process under consideration has an invariant probability measure.  
Most directly related to our setting are works of Fatalov. In \cite{Fatalov}, the author obtains, for multidimensional
Brownian motion under a polynomial-coercivity assumption essentially
comparable to Assumption~\ref{def:classQ}, an exact lower-tail asymptotic
corresponding to the  $z=0 $ case of our sharp half-line asymptotic.
Related exact asymptotics for  $L^p $-functionals of
Ornstein-Uhlenbeck processes were obtained in
\cite{FatalovOU}; these overlap with special cases of the sharp-tail
calculations in Section~\ref{sec:examples}.
In the proof of \cite{Fatalov}, to the best of our understanding, the passage to the
 $T^{-1/2} $ prefactor uses a central limit theorem inside a
 $T $-dependent Laplace integral whose mass is concentrated in an
 $O(T^{-1/2}) $ neighborhood of zero. This requires additional control, which does not follow from the stated central
limit theorem alone (see \cite[Lemma $4.8$]{Fatalov}, and in particular equation (4.50)). The local limit theorem proved below supplies
this control under Assumption~\ref{def:classQ} and, in addition, yields the locally
uniform  $O(1) $-shift asymptotics needed for the fixed-window
total-variation limit.

Exponential weighting of Wiener measure is classical. Limits of Brownian motion penalized by normalized exponential weights were derived in considerable generality in \cite{RoynetteValloisYor}; see also
\cite{LHB20}.  On the operator side, the same limiting object
is the $P(\phi)_1$ process obtained by a ground-state transform of a
Schr\"odinger semigroup; see  \cite{Simon79,RosenSimon}.  These works always study the canonical ensemble, but do not by themselves justify replacing the exponential weight by the hard event $\{ A_T\leq\theta T\}$: the Laplace-mixture identity above loses the information required to isolate one energy density.

The microcanonical-canonical terminology belongs to the broader theory of
ensembles of trajectories.  Chetrite and Touchette
\cite{ChetriteTouchettePRL,ChetriteTouchetteAHP} formulate microcanonical and
canonical path ensembles for Markov processes and construct the associated
driven process by a generalized Doob transform.  Under convexity assumptions
they prove equivalence at the logarithmic large-deviation level, which implies
equality of typical macroscopic behavior.  Their framework correctly predicts
the ground-state diffusion appearing here.  Its notion of equivalence is,
however, weaker than the fixed-window total-variation convergence and sharp
$T^{-1/2}$ asymptotics proved in the present paper; in particular,
logarithmic equivalence alone does not give the fixed $O(1)$ shift ratios used
in our Markov property argument.

The Fourier proof presented in this work belongs to the spectral approach to limit theorems for
additive functionals of Markov processes.  The method goes back to Nagaev and
was developed through perturbation theory for Fourier kernels; 
references include \cite{HennionHerve,HervePene,FerreHerveLedoux}.
These works isolate the two ingredients beyond a
central limit theorem: perturbative control near zero frequency and sufficiently fast decay away from zero.  Our proof realizes the
Fourier kernels directly as the complex Schr\"odinger semigroups
$\mathrm e^{-tH_{\beta-iu}}$ and treats the endpoint weights present in the
finite-horizon Feynman-Kac law.  The final step then follows classical ideas similarly to \cite{StoneLLT,StoneRatio}. See also \cite{vershynin2026friendly} for a very nice introduction to some of those ideas.
We also want to mention the work \cite{RoynetteYorLLT}, whose title concerns
local limit theorems for Brownian additive functionals.  Their regime is
different from the one considered here: they study unpenalized additive
functionals generated by integrable or one-sided potentials and obtain a
$\sqrt T$-scaled vague limit on the fixed-value scale.  In contrast, our local
limit theorem concerns Gaussian fluctuations around the extensive value
$\lambda'(\beta)T$ under a confining Feynman-Kac tilt and is used to derive a
sharp rare-event asymptotic for the original Brownian law.

Finally, the quadratic case is naturally connected with the theory of small
deviations for Gaussian processes; see Lifshits \cite{Lif} and
the survey of Nazarov and Petrova \cite{NazarovPetrova}.  The spectral method
used here retains the sharp information needed for conditioning without
requiring $A_T$ to be a quadratic Gaussian functional.

\subsection{Organization and nomenclature}
We define $\Re z$ to be the real part of $z\in\mathbb C$; the imaginary part is denoted by $\Im z$. By $\lambda(\beta)$ we denote the smallest eigenvalue of $H_\beta$, $\beta>0$. The norm $\|\cdot\|_p$ refers to the norm on $L^p(\mathbb R^d)$; in case no subscript is present, the inner product $\langle\cdot,\cdot\rangle$ refers to the inner product on $L^2(\mathbb R^d)$. The norm $\|\cdot\|_{2\to2}$ is the operator norm on $L^2(\mathbb R^d)$. The symbols $u$ and $v$ used as Fourier variables remain real-valued.

The remaining part of the paper is organized as follows: in Section \ref{sec:main_sec} we provide the main argument by first proving the universal spectral gap. This, together with some other technicalities, then allows us to apply Lemma \ref{lem:stone-smoothing} in order to obtain the LLT. Section \ref{sec:examples} is devoted to generalizing our method to other stochastic processes, including multidimensional Ornstein-Uhlenbeck processes.
Because quite a lot is already known on spectral theory for Schrödinger operators, some results are already known to readers familiar with the topic. We decided to postpone most proofs of simple, or already well-known results in that direction into the Appendix. Our reasoning for including them is twofold: first, most probabilists might not be very familiar with the theory and terminology, and therefore we deemed it best to make the paper as self-contained as possible. Second, we were unable to locate some results as we needed them, for example the complex Feynman-Kac formula obtained in Theorem \ref{thm:complex-fk-app}. It therefore seems like a worthwhile endeavor to collect such statements in our Appendix.

\section{Proof of the Main Results}
\label{sec:main_sec}
Let $\beta >0$. In all that follows we define
\[
 H_\beta=-\frac12\Delta+\beta Q
 \quad\text{on }L^2(\mathbb R^d),
\]
where we use the common form domain 
\[
  \mathcal D
  =H^1(\mathbb R^d)\cap L^2(\mathbb R^d,Q(x)\,\de x).
\]
We will also be interested in studying $H_z$ with $z \in \bbC$ and $\Re z > 0$. The resulting operator is of course not self-adjoint, but can be defined via sectorial forms; for the details we refer to Appendix \ref{sec:sectorial-background}. We will usually write any complex number used to define a Schrödinger operator as $z = \beta - i u$ with  $\beta > 0$ and $u \in \R$. In that case, $\exp(-tH_z)$ is not defined via the functional calculus as it is for $u = 0$, but instead 
as
the semigroup associated 
to the sectorial form 
\begin{equation}
    \nonumber
  \mathfrak h_z[f,g]
  =\frac12\int_{\mathbb R^d} \de x \, \nabla f(x)\cdot\overline{\nabla g(x)}
     +z\int_{\mathbb R^d} \de x \, Q(x)f(x)\overline{g(x)}.
\end{equation}
If $\lambda(\beta)=\inf\spec H_\beta$ and
\begin{equation}
    \nonumber
    H_\beta\phi_\beta=\lambda(\beta)\phi_\beta,
 \qquad \phi_\beta>0,\qquad \norm{\phi_\beta}_2=1,
\end{equation}
then $\phi_\beta$ is called the ground state of $H_\beta$. The fact that $\phi_\beta$ is unique and can be chosen strictly positive is a result from Perron-Frobenius theory; we  refer to Proposition \ref{prop:ground-state-spectral-theory} for the details.
The following lemma allows us to control the characteristic function of $Y_T$ bounded strictly away from zero via \eqref{eq:uniform-gap}.
\begin{lemma}
\label{lemma:strict_spectral_gap}
If $u\neq0$, then every $\zeta\in\spec(H_{\beta-iu})$ satisfies
\begin{equation}
 \Re\zeta>\lambda(\beta).
 \label{eq:strict-gap}
\end{equation}
In particular, for every $0<\delta < R <\infty$,
\begin{equation}
    \label{equ:fourier_annulus_uniform}
 \inf_{\delta\leq|u|\leq R}
 \inf_{\zeta\in\spec(H_{\beta-iu})}\Re\zeta>\lambda (\beta).
\end{equation}
Consequently, for every $0<\delta<R<\infty$, there exist $C,\eta>0$ such
that
\begin{equation}
 \sup_{\delta\leq|u|\leq R}
 \norm{\mathrm e^{-tH_{\beta-iu}}}_{2 \to 2}
 \leq C \mathrm e^{-(\lambda(\beta)+\eta)t},
 \qquad t\geq1.
 \label{eq:uniform-gap}
\end{equation}
\end{lemma}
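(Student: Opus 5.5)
We first show that $H_{\beta-iu}$ has compact resolvent, so that its spectrum consists only of isolated eigenvalues. Then we prove the strict inequality \eqref{eq:strict-gap} by a variational argument, obtain the uniformity \eqref{equ:fourier_annulus_uniform} by continuity and compactness, and finally deduce the semigroup bound \eqref{eq:uniform-gap}.

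\emph{Compact resolvent.} The form $\mathfrak h_z$ with $z=\beta-iu$ has real part
\[
\Re\mathfrak h_z[f,f]=\tfrac12\|\nabla f\|_2^2+\beta\int_{\mathbb R^d}\de x\,Q|f|^2 .
\]
This real part is the form of $H_\beta$. Since $Q$ is confining, the form domain $\mathcal D$ embeds compactly into $L^2(\mathbb R^d)$. Hence $H_z$ is an m-sectorial operator with compact resolvent (Appendix \ref{sec:sectorial-background}), and its spectrum is a discrete set of eigenvalues.

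\emph{Strict gap.} Let $H_z f=\zeta f$ with $\|f\|_2=1$. Pairing the equation with $f$ gives
\[
\zeta=\mathfrak h_z[f,f],\qquad \Re\zeta=\Re\mathfrak h_z[f,f]\ge \tfrac12\|\nabla|f|\|_2^2+\beta\int_{\mathbb R^d}\de x\,Q|f|^2\ge\lambda(\beta).
\]
The first inequality is the diamagnetic inequality $|\nabla|f||\le|\nabla f|$; the second is Rayleigh--Ritz.

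Suppose equality holds throughout. Equality in Rayleigh--Ritz makes $|f|$ a minimizer, so $|f|=\phi_\beta>0$ by uniqueness of the ground state (Proposition \ref{prop:ground-state-spectral-theory}). Write $f=\phi_\beta w$ with $|w|=1$. Equality in the diamagnetic inequality gives
\[
\|\nabla f\|_2^2=\|\nabla\phi_\beta\|_2^2+\int_{\mathbb R^d}\de x\,\phi_\beta^2|\nabla w|^2 ,
\]
using $\Re(\bar w\nabla w)=0$. Hence $\nabla w=0$ a.e., and since $\phi_\beta>0$ we get $w\equiv c$. So $f=c\phi_\beta$. Inserting this into the eigenvalue equation and subtracting $H_\beta\phi_\beta=\lambda(\beta)\phi_\beta$ yields
\[
-iuQ\phi_\beta=(\zeta-\lambda(\beta))\phi_\beta .
\]
Thus $Q\equiv(\lambda(\beta)-\zeta)/(iu)$ is constant, which contradicts Assumption \ref{def:classQ}. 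The care needed here is only regularity: $w\in H^1_{\rm loc}$ since $\phi_\beta$ is continuous and positive, which justifies these manipulations.

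\emph{Uniformity on the annulus.} This is the main obstacle: we need the bottom of the real spectrum to depend lower semicontinuously on $u$. I would argue by contradiction. Suppose $u_n\to u_*\ne0$ and there are normalized eigenfunctions $f_n$ with eigenvalues $\zeta_n$ satisfying $\Re\zeta_n\to\lambda(\beta)$. The identity $\Re\zeta_n=\Re\mathfrak h_{z_n}[f_n,f_n]$ bounds $f_n$ in $\mathcal D$. Since $|\Im\zeta_n|\le R\int Q|f_n|^2$, the $\zeta_n$ are bounded too. By the compact embedding we pass to a subsequence with $f_n\to f$ in $L^2$ (so $\|f\|_2=1$), $f_n\rightharpoonup f$ in $\mathcal D$, and $\zeta_n\to\zeta$. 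Passing to the limit in the form equation gives $H_{\beta-iu_*}f=\zeta f$, and lower semicontinuity of the form gives $\Re\zeta\le\lambda(\beta)$. This contradicts \eqref{eq:strict-gap}. Hence there is $2\eta>0$ with $\Re\zeta\ge\lambda(\beta)+2\eta$ for all $\zeta\in\spec(H_{\beta-iu})$ and all $\delta\le|u|\le R$.

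\emph{Semigroup bound.} For non-normal operators a spectral bound does not immediately control the semigroup norm. Since $H_z$ has compact resolvent, the semigroup $\mathrm e^{-tH_z}$ is eventually compact for $t>0$. For eventually compact semigroups the growth bound equals the spectral bound. Uniformity in $u$ then follows by a compactness argument, using that the semigroups depend norm-continuously on $u$ for fixed $t$, which follows from resolvent continuity. Concretely, pick $t_0$ with
\[
\|\mathrm e^{-t_0H_{\beta-iu}}\|_{2\to2}\le \tfrac12\,\mathrm e^{-(\lambda(\beta)+\eta)t_0}
\]
near each $u$, using the spectral radius formula for the compact operator $\mathrm e^{-t_0H_z}$ together with continuity. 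Cover the annulus by finitely many such neighbourhoods and iterate via the semigroup property. Between multiples of $t_0$ we use the trivial bound $\|\mathrm e^{-sH_z}\|_{2\to2}\le\mathrm e^{-s\lambda(\beta)}$, which comes from accretivity of $H_z-\lambda(\beta)$. Together these give \eqref{eq:uniform-gap}.
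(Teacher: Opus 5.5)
Your proposal is correct and follows the paper's proof essentially step for step: the same diamagnetic/Rayleigh--Ritz rigidity argument forcing $|f|=\phi_\beta$ and constant phase, the same compactness-by-contradiction argument on the annulus, and the same spectral-radius/finite-cover/block-iteration argument for the semigroup bound, which the paper packages as Lemma~\ref{lem:spectral-to-semigroup}. The differences are minor. The paper gets norm continuity in $u$ directly from the Feynman--Kac bound $\|S_u(t)-S_v(t)\|_{2\to2}\le |u-v|/(\mathrm e\beta)$ rather than from resolvent continuity. Also, you should attribute immediate compactness of $\mathrm e^{-tH_z}$ to analyticity of the sectorial semigroup together with compact resolvent, since compact resolvent alone does not imply eventual compactness.
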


\begin{proof}
Because $H_{\beta-iu}$ has compact resolvent, every point of its spectrum is
an eigenvalue.  Let
$H_{\beta-iu}\psi=\zeta\psi$, with $\norm\psi_2=1$.  Taking real parts in
the form identity gives
\begin{equation}
 \Re\zeta
 =\frac12\norm{\nabla\psi}_2^2
  +\beta\int_{\mathbb R^d} \de x \, Q(x) |\psi(x)|^2.
 \label{eq:real-eigenvalue-form}
\end{equation}
The diamagnetic inequality
\[
 |\nabla|\psi||\leq|\nabla\psi|\qquad\text{a.e.}
\]
holds for every complex-valued $\psi\in H^1(\mathbb R^d)$; see, for example,
\cite[Theorem~7.21]{LiebLoss} or \cite{SimonSemigroups}.  Rayleigh-Ritz
applied to $|\psi|$ therefore yields
\[
 \Re\zeta
 \geq \frac12\norm{\nabla|\psi|}_2^2
      +\beta\int_{\mathbb R^d} \de x \, Q(x)|\psi(x)|^2
 \geq\lambda(\beta).
\]
Suppose equality held.  Then equality holds in Rayleigh-Ritz, so simplicity
and positivity of the ground state imply
$|\psi|=\phi_\beta$.  Put $q=\psi/\phi_\beta$.  Then
$q\in H^1_{\mathrm{loc}}(\mathbb R^d)$, $|q|=1$, and
\[
 |\nabla\psi|^2
 =|q\nabla\phi_\beta+\phi_\beta\nabla q|^2
 =|\nabla\phi_\beta|^2+\phi_\beta^2|\nabla q|^2
 \qquad\text{a.e.},
\]
because $\Re(\overline q \partial_i q)=\partial_i \frac12|q|^2=0$ for $1 \le i \le d$. Equality in the
diamagnetic step therefore gives
$\int_{\mathbb R^d}\de x\,\phi_\beta(x)^2|\nabla q(x)|^2=0$.  Since $\phi_\beta>0$ and $\mathbb R^d$ is connected, $q$ is constant,
and hence $\psi=e^{i\alpha_0}\phi_\beta$ for some $\alpha_0\in\R$, meaning that $\phi_\beta$ has the same $H_{\beta-iu}$-energy as $\psi$.
Substitution into the eigenvalue equation
gives
\[
 \bigl(\lambda(\beta)-iuQ(x)\bigr)\phi_\beta(x)
 =\zeta\phi_\beta(x)
\]
for almost every $x$.  Since $u\neq0$ and $\phi_\beta>0$, this would make
$Q$ constant almost everywhere, and hence everywhere by continuity.  This
contradicts $Q\in\cQ$ and hence proves \eqref{eq:strict-gap}.

It remains to make the gap uniform on a compact annulus bounded away from $0$.  Suppose
that this were impossible.  Then there would exist
$u_n\to u$ with $\delta\leq|u|\leq R$ and normalized eigenpairs
\[
 H_{\beta-iu_n}\psi_n=\zeta_n\psi_n,
 \qquad \Re\zeta_n\downarrow\lambda(\beta).
\]
Equation \eqref{eq:real-eigenvalue-form} bounds $(\psi_n)$ in $\cD$.
Moreover, taking imaginary parts gives
\begin{equation}
    \nonumber
 \Im\zeta_n=-u_n\int_{\mathbb R^d} \de x \, Q(x)|\psi_n(x)|^2,
\end{equation}
so $(\zeta_n)$ is bounded.  Passing to a subsequence, compactness of
$\cD\hookrightarrow L^2$ gives
$\psi_n\to\psi$ strongly in $L^2$ and weakly in $\cD$, while
$\zeta_n\to\zeta$.  Passing to the limit in
\[
 \mathfrak h_{\beta-iu_n}[\psi_n,f]
 =\zeta_n\ip{\psi_n}{f},
 \qquad f\in\cD,
\]
shows that $H_{\beta-iu}\psi=\zeta\psi$.  Since $\norm\psi_2=1$ and
$\Re\zeta=\lambda(\beta)$, this contradicts \eqref{eq:strict-gap}.  We have thus
proved \eqref{equ:fourier_annulus_uniform}.
Finally, \eqref{eq:uniform-gap} follows from Lemma~\ref{lem:spectral-to-semigroup}, using
\[
 \Re\mathfrak h_{\beta-iu}[f]
 =\mathfrak h_\beta[f]
 \geq\lambda\norm f_2^2.\qedhere
\]
\end{proof}
Next, we want to connect $Z(z,x)$ to the semigroup generated by $H_z$. This is slightly non-trivial, because $\mathbf 1\notin L^2(\mathbb R^d)$. There exist, however, well-known solutions for it.

\begin{lemma}
\label{lem:real-boundary-smoothing}
For every  $\tau>0 $, define
\[
 g_\tau(x)=\mathbb E^{\prob_x}[\mathrm e^{-\beta A_\tau}]
 \quad\text{and}\quad
 \ell_{\tau,x}(f)=(\mathrm e^{-\tau H_\beta}f)(x)
\]
for any $f\in L^2(\mathbb R^d)$.
Then  $g_\tau\in L^2(\mathbb R^d) $, and for $x\in\mathbb R^d$ we have
\begin{equation}
 |\ell_{\tau,x}(f)|\le (4\pi\tau)^{-d/4}\|f\|_2.
 \label{eq:point-evaluation-bound}
\end{equation}
Moreover, for  $T\ge 2\tau $,
\begin{equation}
 Z_T(\beta,x)=\mathbb E^{\prob_x}[\mathrm e^{-\beta A_T}]
 =\ell_{\tau,x}\!\left(\mathrm e^{-(T-2\tau)H}g_\tau\right).
 \label{eq:two-sided-real-smoothing}
\end{equation}
\end{lemma}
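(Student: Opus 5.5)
The plan is to use the real Feynman-Kac formula together with domination of the Schrödinger semigroup by the free heat semigroup, and then split the time interval $[0,T]$ into the pieces $[0,\tau]$, $[\tau,T-\tau]$, $[T-\tau,T]$ via the Markov property. Throughout, $H=H_\beta$. The standing input is the Feynman-Kac representation
\[
(\mathrm e^{-tH_\beta}f)(x)=\mathbb E^{\prob_x}\bigl[\mathrm e^{-\beta A_t}f(W_t)\bigr],\qquad f\in L^2(\mathbb R^d),
\]
valid for a.e. $x$ and, after choosing the continuous version, for every $x$. This is classical for continuous $Q\ge0$ (it also follows from the appendix version with $u=0$).

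First I will prove the point-evaluation bound \eqref{eq:point-evaluation-bound}. Since $Q\ge0$, we have $|(\mathrm e^{-\tau H}f)(x)|\le \mathbb E^{\prob_x}|f(W_\tau)|=\int \de y\, p_\tau(x-y)|f(y)|$ with the Gaussian kernel $p_\tau(y)=(2\pi\tau)^{-d/2}\mathrm e^{-|y|^2/(2\tau)}$. Cauchy-Schwarz then gives the bound $\|p_\tau\|_2\|f\|_2$, and a direct computation gives $\|p_\tau\|_2^2=(2\pi\tau)^{-d}(\pi\tau)^{d/2}=(4\pi\tau)^{-d/2}$. This yields exactly $(4\pi\tau)^{-d/4}\|f\|_2$.

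Next I will show $g_\tau\in L^2$, which I expect to be the only genuine obstacle, since $\mathbf 1\notin L^2$ and the decay must come from the confinement. For $|x|\ge 2R'$ with $R'$ large, split according to whether $\sup_{s\le\tau}|W_s-x|\le |x|/2$. On this event $|W_s|\ge|x|/2$, so by Assumption \ref{def:classQ}\eqref{bp:ass_2} we get $A_\tau\ge \tau c(|x|/2)^p$, and the contribution is at most $\mathrm e^{-\beta\tau c|x|^p2^{-p}}$. The complementary event has probability at most $C\mathrm e^{-|x|^2/(8d\tau)}$ by the reflection principle applied coordinatewise. Both bounds are square-integrable over $\{|x|\ge 2R'\}$, and $g_\tau\le1$ on the bounded remaining region. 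Hence $g_\tau\in L^2$.

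Finally, I will prove \eqref{eq:two-sided-real-smoothing} for $T\ge2\tau$. Writing $A_T=A_\tau+A_{T-\tau}\circ\theta_\tau$, the Markov property at time $\tau$ gives $Z_T(\beta,x)=\mathbb E^{\prob_x}[\mathrm e^{-\beta A_\tau}Z_{T-\tau}(\beta,W_\tau)]$. Applying the Markov property again at time $T-2\tau$ inside gives $Z_{T-\tau}(\beta,y)=\mathbb E^{\prob_y}[\mathrm e^{-\beta A_{T-2\tau}}g_\tau(W_{T-2\tau})]=(\mathrm e^{-(T-2\tau)H}g_\tau)(y)$ by Feynman-Kac, since $g_\tau\in L^2$. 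Substituting back and using Feynman-Kac once more with $f=\mathrm e^{-(T-2\tau)H}g_\tau\in L^2$ yields $\ell_{\tau,x}(f)$. Because $W_\tau$ has a density, the a.e. ambiguity in the inner function does not matter, and the outer evaluation is defined pointwise through the expectation, consistent with the definition of $\ell_{\tau,x}$.
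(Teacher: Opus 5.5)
Your proposal is correct and follows essentially the same route as the paper: Feynman-Kac domination by the free heat kernel plus Cauchy-Schwarz for the point-evaluation bound, the same split on $\{\sup_{s\le\tau}|W_s-x|\le |x|/2\}$ with the reflection principle to get $g_\tau\in L^2$, and a splitting of $[0,T]$ into pieces of length $\tau$, $T-2\tau$, $\tau$ for the identity. Your explicit two-step Markov argument, together with the remark that $W_\tau$ has a density so a.e.\ ambiguities are harmless, simply spells out what the paper compresses into ``the semigroup property''.
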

\begin{proof}
Choose  $R,c,p>0 $ so that  $Q(y)\ge c|y|^p $ for  $|y|\ge R $.
For  $|x|\ge 2R $, let
\[
 E_x=\left\{\sup_{0\le s\le\tau}|W_s-x|\le |x|/2\right\}.
\]
On  $E_x $, one has  $|W_s|\ge |x|/2 $ for every  $s\le\tau $, and hence
 $A_\tau\ge c2^{-p}\tau|x|^p $.  The reflection principle and a Gaussian
tail bound give
\[
 \mathbb P_x(E_x^c)
 \le 4d\exp \big(-|x|^2/(8d\tau)\big).
\]
Consequently,
\begin{equation}
\nonumber
 0\le g_\tau(x)
 \le \exp(-\beta c2^{-p}\tau|x|^p)
      +4d\exp(-|x|^2/(8d\tau))
 \label{eq:g-tau-decay}
\end{equation}
for all sufficiently large  $|x| $.  Since  $0\le g_\tau\le1 $ on bounded
sets, this proves  $g_\tau\in L^2(\mathbb R^d) $.
Feynman-Kac domination by the free heat semigroup  $P_\tau $, followed by
Cauchy-Schwarz, yields
\[
 |\ell_{\tau,x}(f)|
 \le (P_\tau|f|)(x)
 \le \|p_\tau(x,\cdot)\|_2\|f\|_2
 =(4\pi\tau)^{-d/4}\|f\|_2,
\]
which is \eqref{eq:point-evaluation-bound}.  Finally,
\eqref{eq:two-sided-real-smoothing} follows from the semigroup property after splitting the time interval into pieces of
lengths  $\tau,T-2\tau,\tau $.
\end{proof}

\begin{proposition}
\label{prop:partition-asymptotic-rigorous}
Let  $\lambda_1(\beta)>\lambda(\beta) $ denote the next eigenvalue of
 $H_\beta $.  Then, for every compact  $K\subset\mathbb R^d $,
\begin{equation}
 Z_T(\beta,x)
 =\mathrm e^{-\lambda(\beta)T}\phi_\beta(x)\|\phi_\beta\|_1
 \left(1+O_K\!\left(\mathrm e^{-[\lambda_1(\beta)-\lambda(\beta)]T}\right)\right)
 \label{eq:partition-asymptotic-rigorous}
\end{equation}
uniformly for  $x\in K $.  In particular, the same assertion holds with
an  $O_x(\cdot) $ error for every fixed  $x $.
\end{proposition}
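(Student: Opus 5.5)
The plan is to combine the two-sided smoothing identity \eqref{eq:two-sided-real-smoothing} of Lemma~\ref{lem:real-boundary-smoothing} with the spectral decomposition of the self-adjoint operator $H=H_\beta$. Since $H_\beta$ has compact resolvent (Proposition~\ref{prop:ground-state-spectral-theory}), we write $P_0=\langle\cdot,\phi_\beta\rangle\phi_\beta$ for the rank-one projection onto the simple ground state. For $t\ge0$ we then have
\[
 \mathrm e^{-tH}=\mathrm e^{-\lambda(\beta)t}P_0+R_t,\qquad \|R_t\|_{2\to2}\le \mathrm e^{-\lambda_1(\beta)t}.
\]
Fix $\tau=1$ and take $T\ge2$. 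Inserting this decomposition into \eqref{eq:two-sided-real-smoothing} with $t=T-2$ gives
\[
 Z_T(\beta,x)=\mathrm e^{-\lambda(\beta)(T-2)}\langle g_1,\phi_\beta\rangle\,\ell_{1,x}(\phi_\beta)+\ell_{1,x}(R_{T-2}g_1).
\]

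First we identify the main term. Because $\phi_\beta$ is an eigenfunction, $\ell_{1,x}(\phi_\beta)=(\mathrm e^{-H}\phi_\beta)(x)=\mathrm e^{-\lambda(\beta)}\phi_\beta(x)$, with equality holding pointwise for the continuous representative of $\phi_\beta$. For the other factor, self-adjointness and the Feynman-Kac representation of $\mathrm e^{-H}$ give
\[
 \langle g_1,\phi_\beta\rangle=\int \mathrm dy\,(\mathrm e^{-H}\mathbf 1)(y)\,\phi_\beta(y)=\int\mathrm dy\,(\mathrm e^{-H}\phi_\beta)(y)=\mathrm e^{-\lambda(\beta)}\|\phi_\beta\|_1 .
\]
Here $\mathrm e^{-H}\mathbf 1$ is understood through its symmetric positive kernel, and the interchange is justified by Tonelli since everything involved is nonnegative. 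This step needs $\phi_\beta\in L^1$. It follows from the exponential decay of the ground state (again Proposition~\ref{prop:ground-state-spectral-theory}), or directly from $\phi_\beta=\mathrm e^{\lambda}\mathrm e^{-H}\phi_\beta$ together with the Cauchy-Schwarz bound $\mathrm e^{-H}\phi_\beta(x)\le \|\phi_\beta\|_2\,\|k_1(x,\cdot)\|_2$, where the kernel $k_1(x,\cdot)$ carries the decay of $g_1$ coming from Lemma~\ref{lem:real-boundary-smoothing}. Multiplying the two factors, the exponent becomes $-\lambda(\beta)(T-2)-2\lambda(\beta)=-\lambda(\beta)T$, so the main term is exactly $\mathrm e^{-\lambda(\beta)T}\phi_\beta(x)\|\phi_\beta\|_1$.

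Next we bound the remainder. By \eqref{eq:point-evaluation-bound},
\[
 |\ell_{1,x}(R_{T-2}g_1)|\le(4\pi)^{-d/4}\mathrm e^{-\lambda_1(\beta)(T-2)}\|g_1\|_2 ,
\]
and this bound is uniform in $x$. To convert it into the stated relative error, we divide by the main term. The relative error is then $O\bigl(\mathrm e^{-(\lambda_1-\lambda)T}/\phi_\beta(x)\bigr)$. We therefore need $\inf_K\phi_\beta>0$, which holds because $\phi_\beta$ is continuous and strictly positive, and a continuous positive function attains a positive minimum on a compact set. This yields \eqref{eq:partition-asymptotic-rigorous} uniformly on $K$, and the fixed-$x$ statement is the special case $K=\{x\}$.

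The main obstacle is the rigorous justification of the pointwise identities. One must check that $\ell_{1,x}$ evaluated at an eigenfunction gives the eigenvalue times the continuous version of $\phi_\beta$. One must also check that $\mathbf 1\notin L^2$ can be handled through the symmetric kernel in order to get $\langle g_1,\phi_\beta\rangle=\mathrm e^{-\lambda}\|\phi_\beta\|_1$. The approach here is to work with the jointly continuous, symmetric Feynman-Kac kernel of $\mathrm e^{-H}$, whose properties are collected in the Appendix, and to use Tonelli throughout.
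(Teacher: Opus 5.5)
Your proposal is correct and follows the paper's proof essentially line by line. It uses the same $\tau,\,T-2\tau,\,\tau$ splitting from Lemma~\ref{lem:real-boundary-smoothing}, the same decomposition $\mathrm e^{-sH_\beta}=\mathrm e^{-\lambda(\beta)s}P+R_s$ with $\|R_s\|_{2\to2}\le \mathrm e^{-\lambda_1(\beta)s}$, the same symmetric-kernel/Tonelli justification of $\langle g_\tau,\phi_\beta\rangle=\mathrm e^{-\lambda(\beta)\tau}\|\phi_\beta\|_1$, the same uniform remainder bound via \eqref{eq:point-evaluation-bound}, and the same division by $\inf_K\phi_\beta>0$; the only difference is that the paper keeps $\tau>0$ general where you take $\tau=1$.
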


\begin{proof}
Fix  $\tau>0 $ and let  $P f=\langle f,\phi_\beta\rangle\phi_\beta $ be the
orthogonal projection onto the ground state.  The spectral theorem gives,
for  $s\ge0 $,
\[
 \mathrm e^{-sH_\beta}=\mathrm e^{-\lambda(\beta) s}P+R_s,
 \qquad \|R_s\|_{2\to2}\le e^{-\lambda_1 (\beta)s}.
\]
Using Lemma~\ref{lem:real-boundary-smoothing} with  $s=T-2\tau $, we obtain
\begin{align}
 Z_T(\beta,x)
 &=\mathrm e^{-\lambda(\beta)(T-2\tau)}
   \ell_{\tau,x}(\phi_\beta)\langle g_\tau,\phi_\beta\rangle
   +\ell_{\tau,x}(R_{T-2\tau}g_\tau).
 \label{eq:partition-spectral-split}
\end{align}
The eigenfunction identity, in its continuous pointwise version, gives
\[
 \ell_{\tau,x}(\phi_\beta )=\mathrm e^{-\lambda(\beta)\tau}\phi_\beta (x).
\]
Also,
\begin{equation}
 \langle g_\tau,\phi_\beta \rangle
 =\mathrm e^{-\lambda (\beta)\tau}\langle \mathbf 1,\phi_\beta \rangle 
 =\mathrm e^{-\lambda (\beta)\tau}\|\phi_\beta \|_1 .
 \label{eq:ground-boundary-overlap}
\end{equation}
To justify \eqref{eq:ground-boundary-overlap} despite  $\mathbf 1\notin L^2(\mathbb R^d) $, use
the nonnegative symmetric Feynman-Kac kernel  $k_\tau(y,z) $.  Tonelli's Theorem and  $\phi_\beta\in L^1(\mathbb R^d) $ give
\[
 \int \de y \, \phi_\beta (y)\!\int \de z \, k_\tau(y,z)
 =\int \de z \,(\mathrm e^{-\tau H_\beta}\phi_\beta )(z)
 =\mathrm e^{-\lambda\tau}\int \de z\,\phi_\beta (z).
\]
Thus the first term in \eqref{eq:partition-spectral-split} is exactly
 $e^{-\lambda(\beta) T}\phi_\beta (x)\|\phi_\beta \|_1 $.  By
\eqref{eq:point-evaluation-bound}, the remainder satisfies
\[
 \left|\ell_{\tau,x}(R_{T-2\tau}g_\tau)\right|
 \le (4\pi\tau)^{-d/4}\|g_\tau\|_2
      e^{-\lambda_1 (\beta)(T-2\tau)},
\]
uniformly in  $x $.  Since  $\phi_\beta  $ is continuous and strictly positive,
 $\inf_{x\in K}\phi_\beta (x)>0 $.  Dividing by the leading term therefore
proves \eqref{eq:partition-asymptotic-rigorous}, locally uniformly in  $x $.
\end{proof}
The next proposition is a generalization of the previous one, relying on analytic perturbation theory. The key difference is that the constant that multiplies the potential $Q$ can also be complex.
For later use, we record the corresponding complex notation. For  $u\in\mathbb R $, set\[g_{\tau,u}(x)=\mathbb E^{\bbP_x}\left[\exp\left(-(\beta-iu)A_\tau\right)\right],\qquad\ell_{\tau,u,x}(f)=\left(\exp\left(-\tau H_{\beta-iu}\right)f\right)(x).\]
The complex Feynman-Kac formula gives
\[|g_{\tau,u}(x)|\leq g_{\tau,0}(x),\qquad|\ell_{\tau,u,x}(f)|\leq(4\pi\tau)^{-d/4}\|f\|_2.\]
In particular,  $g_{\tau,u}\in L^2(\mathbb R^d) $, uniformly for  $u $ in compact sets. Moreover,
\[\left|\ell_{\tau,u,x}(f)-\ell_{\tau,v,x}(f)\right|\leq\frac{|u-v|}{\mathrm e\beta}\left(P_\tau|f|\right)(x),\]
while  $u\mapsto g_{\tau,u} $ is continuous in  $L^2(\mathbb R^d) $ by dominated convergence. Thus all endpoint quantities used below depend continuously on  $u $, locally uniformly in  $x $. Once  $\tau $ is fixed, we abbreviate
\[g_u=g_{\tau,u},\qquad\ell_{u,x}=\ell_{\tau,u,x}.\]
\begin{proposition}
\label{prop:analytic-band}
There exist $\varepsilon,\gamma,C>0$, an analytic eigenvalue
$u\mapsto\lambda_u =\lambda(\beta-iu)$, and analytic rank-one Riesz
projections $u\mapsto P_u$, defined on an open neighborhood of $|u|\le\varepsilon$, such that
\begin{equation}
 S_u(t)=e^{-t\lambda_u}P_u+R_u(t),
 \qquad
 \norm{R_u(t)}_{2\to2}
 \leq Ce^{-(\lambda (\beta)+\gamma)t}
 \label{eq:local-spectral-decomposition}
\end{equation}
for $|u|\leq\varepsilon$ and $t\geq1$.  Moreover,
\begin{equation}
 \lambda(\beta-iu)
 =\lambda (\beta)-iu \lambda'(\beta)
  -\frac12 \lambda''(\beta) u^2+O(|u|^3).
 \label{eq:lambda-expansion}
\end{equation}
After decreasing $\varepsilon$ if necessary,
\begin{equation}
 \Re\bigl(\lambda(\beta-iu)-\lambda(\beta)+iu \lambda'(\beta)\bigr)
 \geq c_0u^2,
 \qquad |u|\leq\varepsilon,
 \label{eq:quadratic-real-part}
\end{equation}
for some $c_0>0$.
\end{proposition}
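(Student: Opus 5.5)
We treat $z\mapsto H_z$, $\Re z>0$, as a holomorphic family of type (B) in the sense of Kato. The forms $\mathfrak h_z$ share the domain $\mathcal D$ and are sectorial, and $z\mapsto\mathfrak h_z[f]$ is affine in $z$. Hence the resolvents $(H_{\beta-iu}-\zeta)^{-1}$ are analytic in $u$ on a complex neighbourhood of $0$, jointly with $\zeta$ off the spectrum.

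\textbf{Isolating the ground-state eigenvalue.} At $u=0$ the eigenvalue $\lambda(\beta)$ is simple and isolated (Proposition~\ref{prop:ground-state-spectral-theory}, compact resolvent). Fix the circle $\Gamma=\{|\zeta-\lambda(\beta)|=r\}$ with $r<\tfrac12(\lambda_1(\beta)-\lambda(\beta))$. By a Neumann series argument, $\Gamma$ stays in the resolvent set for $|u|\le\varepsilon$. Define the Riesz projection
\[
P_u=\frac{1}{2\pi i}\oint_\Gamma (\zeta-H_{\beta-iu})^{-1}\,\de\zeta .
\]
It is analytic in $u$ and continuous in norm, so its rank stays equal to one. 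Consequently $\lambda_u=\operatorname{tr}(H_{\beta-iu}P_u)$ is an analytic simple eigenvalue, and it agrees with the analytic continuation of the real-analytic map $\beta\mapsto\lambda(\beta)$ evaluated at $\beta-iu$.

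Expansion \eqref{eq:lambda-expansion} then follows from the chain rule: $\frac{\de}{\de u}\lambda(\beta-iu)=-i\lambda'(\beta-iu)$, and the second derivative is $-\lambda''$. Feynman--Hellmann gives $\lambda'(\beta)=\langle Q\phi_\beta,\phi_\beta\rangle$, but we only need analyticity here. The quadratic lower bound \eqref{eq:quadratic-real-part} reads off the real part, which equals $-\tfrac12\lambda''(\beta)u^2+O(|u|^3)$. So it suffices to show $\lambda''(\beta)<0$ and shrink $\varepsilon$.

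\textbf{Strict concavity.} Concavity of $\lambda$ is immediate: it is an infimum of affine functions of $\beta$ (Rayleigh--Ritz). For strictness, use second-order perturbation theory:
\[
\lambda''(\beta)=-2\langle (Q-\lambda'(\beta))\phi_\beta,(H_\beta-\lambda(\beta))^{-1}_{\perp}(Q-\lambda'(\beta))\phi_\beta\rangle .
\]
This is strictly negative unless $(Q-\lambda')\phi_\beta=0$, that is, unless $Q$ is constant, which Assumption~\ref{def:classQ} excludes. One technical point is that $Q\phi_\beta\in L^2$. The cleanest route is to compute $\lambda''$ by differentiating $P_u$ and avoid this. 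Alternatively, one can use log-convexity of $Z$ and the variance of $A_T$: $\lambda''(\beta)=-\lim T^{-1}\operatorname{Var}_{\widehat{\mathbb P}}(A_T)$. I expect this strictness to be the main obstacle if the domain issue persists. A fallback argument: if $\lambda''(\beta)=0$, concavity and analyticity make $\lambda$ affine, which contradicts the behaviour of $\lambda(\beta)/\beta\to q_*$ together with $\lambda(\beta)=o(\beta)$ fails unless... So I would rely on the perturbative formula, justifying $Q\phi_\beta\in L^2$ via exponential decay of $\phi_\beta$ (Agmon) and polynomial growth bounds.

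\textbf{Remainder estimate.} Set $R_u(t)=S_u(t)(I-P_u)$. This is the semigroup of $H_{\beta-iu}$ restricted to $\operatorname{Ran}(I-P_u)$, an invariant subspace. Its spectrum is $\spec(H_{\beta-iu})\setminus\{\lambda_u\}$. By upper semicontinuity of the spectrum, together with the fact that spectrum outside $\Gamma$ at $u=0$ has real part at least $\lambda_1(\beta)$, the remaining spectrum has real part at least $\lambda(\beta)+2\gamma$ uniformly for $|u|\le\varepsilon$. For the uniformity near infinity, use the numerical range $\Re\mathfrak h\ge\lambda(\beta)$, and for bounded $\zeta$ use the compactness argument from the proof of Lemma~\ref{lemma:strict_spectral_gap}.

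We then apply Lemma~\ref{lem:spectral-to-semigroup} to the restricted operator to obtain $\|R_u(t)\|\le Ce^{-(\lambda(\beta)+\gamma)t}$ for $t\ge1$. Alternatively, write $R_u(t)$ as a contour integral of the resolvent along $\Re\zeta=\lambda(\beta)+\gamma$, using the uniform resolvent bounds guaranteed by sectoriality. In either case the constant is uniform in $|u|\le\varepsilon$ by continuity of the resolvent.
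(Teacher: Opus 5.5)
Your architecture matches the paper's: a type (B) holomorphic family, a Riesz projection on a fixed circle, Taylor expansion along $z=\beta-iu$, a uniform real-part gap for the complementary spectrum, and then Lemma~\ref{lem:spectral-to-semigroup}. The genuine gap is the strict concavity $\lambda''(\beta)<0$. Estimate \eqref{eq:quadratic-real-part} requires it, and your proposal does not establish it.

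\begin{itemize}
\item Your main route needs $Q\phi_\beta\in L^2$, and the suggested justification (Agmon decay plus polynomial growth bounds) is not available. Assumption~\ref{def:classQ} only bounds $Q$ from below, so $Q$ may grow arbitrarily fast or carry tall thin spikes, and $Q\phi_\beta\in L^2$ need not hold.
\item The fallback is false as stated. $\lambda''(\beta_0)=0$ at a single point does not make a concave analytic function affine; consider $-x^4$.
\item The variance identity $\lambda''(\beta)=-\lim T^{-1}\operatorname{Var}(A_T)$ only yields $\lambda''\le 0$. Strict positivity of the limiting variance is exactly what has to be proved.
\end{itemize}

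The fix is to read your second-order formula in the form sense. This is what the paper does: its proof of this proposition takes $\lambda''(\beta)<0$ from Proposition~\ref{prop:hellmann-feynman}.
\begin{itemize}
\item The functional $L_\beta(g)=\int(Q-\theta_\beta)\phi_\beta g$ is bounded on $\mathcal D$, because $|\int Q\phi_\beta g|\le\|\sqrt Q\,\phi_\beta\|_2\,\|\sqrt Q\, g\|_2$.
\item The form $b_\beta=\mathfrak h_\beta-\lambda(\beta)$ is coercive on the real subspace $\{g\in\mathcal D: \langle g,\phi_\beta\rangle=0\}$ by the spectral gap.
\item Lax--Milgram then gives $u_\beta$ with $b_\beta[u_\beta,\cdot\,]=L_\beta$. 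The difference quotients of $\phi_\beta$ converge to $-u_\beta$ in $\mathcal D$, and $\lambda''(\beta)=-2b_\beta[u_\beta,u_\beta]$.
\item If $u_\beta=0$, then $L_\beta$ vanishes on that subspace and on $\phi_\beta$, hence on all of $\mathcal D$. So $(Q-\theta_\beta)\phi_\beta=0$ a.e., and $Q$ is constant.
\end{itemize}
No $L^2$ membership of $Q\phi_\beta$ is needed.

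A smaller point concerns the remainder estimate. The bound $\Re\mathfrak h\ge\lambda(\beta)$ does not exclude spectrum with $\Re\zeta$ close to $\lambda(\beta)$ and large $|\Im\zeta|$. What excludes it is the sector bound $|\Im\zeta|\le(|u|/\beta)\Re\zeta$ (the paper's cone argument), which confines all threatening spectrum to a compact set. The compactness argument you borrow from Lemma~\ref{lemma:strict_spectral_gap} already contains this through the identity $\Im\zeta_n=-u_n\int Q|\psi_n|^2$. So it works if you apply it directly, without first restricting to bounded $\zeta$.
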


\begin{proof}
The forms $z\mapsto\mathfrak h_z$ constitute a holomorphic family of type
(a) on the common domain $\cD$, and the associated operators form a
holomorphic family of type (B).  Since $\lambda (\beta)$ is an isolated simple
eigenvalue of $H_\beta$, analytic perturbation theory for isolated
eigenvalues gives analytic continuations $\lambda(z)$ and $P(z)$ in a
neighborhood of $\beta$; see
\cite[Chapter~VII, Sections~1 and~4]{Kato}.  Taylor expansion along
$z=\beta-iu$ gives
\eqref{eq:lambda-expansion} and then
\eqref{eq:quadratic-real-part}.

It remains to verify \eqref{eq:local-spectral-decomposition}. The difficulty here is that the real gap between the first and second eigenvalues of small perturbations might not be uniformly big. At $u=0$, the self-adjoint operator $H_{\beta}$ possesses a strict spectral gap; the spectrum on $\text{Ran}(I-P_0)$ lies entirely in $[\lambda+\gamma_{0},\infty)$ for some $\gamma_{0}>0$. To bound $R_u(t)$ via Lemma \ref{lem:spectral-to-semigroup}, we must prove that a uniform positive real-part gap between the first and second eigenvalue survives for all sufficiently small perturbations $|u| \le \varepsilon$.
Norm-resolvent continuity of a type-(B) family guarantees spectral stability across curves in the complex plane which are compactly supported; see also the discussion in Appendix \ref{subsec:holomorphic-riesz-app}.
To  exclude eigenvalues coming down from infinity, note that for any eigenvalue $\zeta \in \text{spec}(H_{\beta-iu})$ with normalized eigenfunction $\psi$, evaluating the imaginary part of the associated form yields
\begin{equation*}
|\mathfrak{I}\zeta|=|u|\int_{\mathbb{R}^d}\de x \, Q(x)|\psi (x)|^{2}\le\frac{|u|}{\beta}\mathfrak{R}\zeta.
\end{equation*}
This inequality confines the entire spectrum of $H_{\beta-iu}$ to a forward-facing cone in the complex plane. Consequently, if we restrict our attention to a fixed vertical strip where the real part $\mathfrak{R}\zeta$ is bounded, the imaginary part $\mathfrak{I}\zeta$ is bounded, as well.
Thus, all spectral points that could potentially threaten the uniform gap remain confined within a strictly bounded, compact subset of $\mathbb{C}$. Therefore, \eqref{eq:local-spectral-decomposition} follows from Lemma \ref{lem:spectral-to-semigroup}.
\end{proof}

In all that follows, we abbreviate the characteristic function of $Y_T$ by
\[\chi_{T,x}(u) = \bbE^{\bbPh_x ^{\beta,T}}[\exp(iuY_T)]\]
and take $\lambda_u =\lambda(\beta-iu)$ to be the analytic eigenvalue found in the previous proposition. For convenience we  abbreviate $\theta_\beta = \lambda' (\beta)$, as well as $\sigma^2 _\beta = - \lambda '' (\beta)$. The following lemma implements the heuristic that $Y_T$ should satisfy a CLT rigorously, and provides the remaining Fourier estimate to upgrade the CLT to a LLT.

\begin{lemma}
\label{lem:fourier-estimates}
Let $K\subset\mathbb R^d$ be compact.  There exist
$\varepsilon,c,C,\gamma>0$ such that, for all $T\geq1$,
\begin{equation}
 \sup_{x\in K}|\chi_{T,x}(u)|
 \leq C\bigl(\mathrm e^{-cTu^2}+ \mathrm e^{-\gamma T}\bigr),
 \qquad |u|\leq\varepsilon.
 \label{eq:small-frequency-bound}
\end{equation}
Moreover, locally uniformly in $(x,v)\in\mathbb R^d\times\mathbb R$,
\begin{equation}
 \chi_{T,x}\left(\frac v{\sqrt T}\right)
 \longrightarrow \mathrm e^{-\sigma_\beta^2v^2/2}.
 \label{eq:scaled-characteristic-limit}
\end{equation}
Finally, for every $0<\delta<R<\infty$, there exist
$C_{K,\delta,R},\eta_{\delta,R}>0$ such that
\begin{equation}
 \sup_{\substack{x\in K\\\delta\leq|u|\leq R}}
 |\chi_{T,x}(u)|
 \leq C_{K,\delta,R}\mathrm e^{-\eta_{\delta,R}T}.
 \label{eq:annular-characteristic-bound}
\end{equation}
\end{lemma}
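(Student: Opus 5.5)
The plan is to express $\chi_{T,x}$ exactly through the complex semigroups and then insert the spectral information already available. By definition of $\bbPh^{\beta,T}_x$ and $Y_T=A_T-\theta_\beta T$,
\[
 \chi_{T,x}(u)=\mathrm e^{-iu\theta_\beta T}\,\frac{Z_T(\beta-iu,x)}{Z_T(\beta,x)}.
\]
Fix $\tau>0$. Using the complex Feynman--Kac formula (Theorem~\ref{thm:complex-fk-app}) and splitting $[0,T]$ into pieces of lengths $\tau,T-2\tau,\tau$ exactly as in Lemma~\ref{lem:real-boundary-smoothing}, I would write
\[
 Z_T(\beta-iu,x)=\ell_{u,x}\bigl(\mathrm e^{-(T-2\tau)H_{\beta-iu}}g_u\bigr).
\]
The denominator is handled by Proposition~\ref{prop:partition-asymptotic-rigorous}: uniformly on $K$ it is bounded below by $c_K\mathrm e^{-\lambda(\beta)T}$, where $c_K>0$ because $\phi_\beta$ is positive and continuous. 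The numerator will be estimated using $|\ell_{u,x}(f)|\le(4\pi\tau)^{-d/4}\|f\|_2$ together with $\|g_u\|_2\le\|g_0\|_2$.

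The annular bound \eqref{eq:annular-characteristic-bound} then follows at once. By \eqref{eq:uniform-gap},
\[
 |Z_T(\beta-iu,x)|\le C\,\mathrm e^{-(\lambda(\beta)+\eta)(T-2\tau)},
\]
and dividing by the lower bound on $Z_T(\beta,x)$ gives $C_{K,\delta,R}\mathrm e^{-\eta T}$. For small frequencies I would use Proposition~\ref{prop:analytic-band} with $t=T-2\tau$ and $|u|\le\varepsilon$:
\[
 Z_T(\beta-iu,x)=\mathrm e^{-\lambda_u(T-2\tau)}\ell_{u,x}(P_ug_u)+\ell_{u,x}(R_u(T-2\tau)g_u).
\]
The remainder term, divided by $Z_T(\beta,x)$, is $O(\mathrm e^{-\gamma T})$ uniformly on $K$. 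For the main term, $P_u$ is analytic and hence bounded for $|u|\le\varepsilon$, so $|\ell_{u,x}(P_ug_u)|\le C$. Combined with \eqref{eq:quadratic-real-part}, which gives
\[
 |\mathrm e^{-iu\theta_\beta T}\mathrm e^{-\lambda_u(T-2\tau)}|\le C\,\mathrm e^{-\lambda(\beta)T}\mathrm e^{-c_0u^2(T-2\tau)},
\]
this yields \eqref{eq:small-frequency-bound} with $c=c_0/2$, say, for $T\ge 4\tau$. For $1\le T<4\tau$ the bound is trivial since $|\chi|\le1$.

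For the scaled limit \eqref{eq:scaled-characteristic-limit}, put $u=v/\sqrt T$. By \eqref{eq:lambda-expansion},
\[
 -iu\theta_\beta T-\lambda_u(T-2\tau)+\lambda(\beta)T\to -\sigma_\beta^2v^2/2,
\]
locally uniformly in $v$: the $2\tau\lambda_u$ terms contribute $2\tau(\lambda_u-\lambda(\beta))\to0$, and the cubic error is $O(|v|^3T^{-1/2})$. It remains to show
\[
 \ell_{u,x}(P_ug_u)\to\ell_{0,x}(P_0g_0)=\mathrm e^{-2\lambda(\beta)\tau}\phi_\beta(x)\|\phi_\beta\|_1,
\]
locally uniformly in $x$; this is the step I expect to need the most care. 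I would write
\[
 \ell_{u,x}(P_ug_u)-\ell_{0,x}(P_0g_0)=[\ell_{u,x}-\ell_{0,x}](P_ug_u)+\ell_{0,x}\bigl(P_ug_u-P_0g_0\bigr).
\]
The first bracket is at most $\frac{|u|}{\mathrm e\beta}(P_\tau|P_ug_u|)(x)\le C|u|$ by the Lipschitz estimate recorded before Proposition~\ref{prop:analytic-band}. The second is at most $(4\pi\tau)^{-d/4}\|P_ug_u-P_0g_0\|_2\to0$, by analyticity of $P_u$ in operator norm and $L^2$-continuity of $u\mapsto g_u$. The value at $u=0$ follows from the computation in the proof of Proposition~\ref{prop:partition-asymptotic-rigorous}. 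Dividing by the asymptotic \eqref{eq:partition-asymptotic-rigorous} for $Z_T(\beta,x)$, the constants $\phi_\beta(x)\|\phi_\beta\|_1\mathrm e^{-2\lambda\tau}$ cancel, and the remainder is $O(\mathrm e^{-\gamma T})$. This gives the Gaussian limit locally uniformly in $(x,v)$.
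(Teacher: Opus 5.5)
Your proposal is correct and follows the paper's proof essentially step for step. It uses the same $\tau, T-2\tau, \tau$ splitting, the same spectral decomposition from Proposition~\ref{prop:analytic-band} for $|u|\le\varepsilon$, the uniform gap \eqref{eq:uniform-gap} on the annulus, and the same division by the real asymptotic \eqref{eq:partition-asymptotic-rigorous}.

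There is one small bookkeeping slip in your scaled-limit display. With $+\lambda(\beta)T$, the exponent tends to $-\sigma_\beta^2v^2/2+2\tau\lambda(\beta)$, not $-\sigma_\beta^2v^2/2$. It is this factor $\mathrm e^{2\tau\lambda(\beta)}$ that cancels the $\mathrm e^{-2\lambda(\beta)\tau}$ in your amplitude, so the conclusion is unaffected. The paper avoids the issue by packaging $\mathrm e^{2\tau\lambda_u}\ell_{u,x}(P_ug_u)$ into a single amplitude $a_x(u)$ with $a_x(0)=\phi_\beta(x)\norm{\phi_\beta}_1$.
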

\begin{proof}
Fix $\tau>0$ as in Lemma~\ref{lem:real-boundary-smoothing} and abbreviate $\lambda=\lambda_0=\lambda(\beta)$.  We first work with
$T\geq2\tau+1$; all estimates for $1\leq T<2\tau+1$ are then absorbed by
enlarging their constants.  For
$|u|\leq\varepsilon$, use Proposition \ref{prop:analytic-band}
and
\eqref{eq:local-spectral-decomposition} to obtain
\begin{equation}
 Z_T(\beta-iu,x)
 =\mathrm e^{-T\lambda_u}a_x(u)
  +O_K\bigl(\mathrm e^{-(\lambda+\gamma_1)T}\bigr),
 \label{eq:complex-partition-expansion}
\end{equation}
where, after changing the value of $\gamma_1>0$ if necessary,
 $a_x(u)
 = \mathrm e^{2\tau\lambda_u}\ell_{u,x}(P_ug_u)$.
Lemma~\ref{lem:real-boundary-smoothing} and
Proposition~\ref{prop:analytic-band} show that $a_x(u)$ is continuous in
$u$, locally uniformly in $x$; continuity in $x$ follows from the
Feynman-Kac heat-kernel representation.  At $u=0$, since
$P_0f=\phi_\beta\ip{f}{\phi_\beta}$,
\begin{align*}
 a_x(0)
 &=\mathrm  e^{2\tau\lambda}
   \bigl(S_0(\tau)P_0S_0(\tau)\mathbf 1\bigr)(x)
 =\phi_\beta(x)\ip{\mathbf 1}{\phi_\beta}
 =\phi_\beta(x)\norm{\phi_\beta}_1.
\end{align*}
In particular, $a_x(0)$ is bounded away from zero on $K$.
Put
\[
 \Psi(u)=\lambda_u-\lambda+iu\theta_\beta.
\]
Using 
\begin{equation}
 \chi_{T,x}(u)
 =\mathrm e^{-iu\theta_\beta T}
  \frac{Z_T(\beta-iu,x)}{Z_T(\beta,x)},
 \label{eq:characteristic}
\end{equation}
\eqref{eq:complex-partition-expansion}, and the real partition-function
asymptotic \eqref{eq:partition-asymptotic-rigorous}, we obtain
\begin{equation}
 \chi_{T,x}(u)
 =\frac{a_x(u)}{a_x(0)}\mathrm e^{-T\Psi(u)}
  +O_K(\mathrm e^{-\gamma_2T}),
 \qquad |u|\leq\varepsilon.
 \nonumber
\end{equation}
The amplitude is locally bounded, tends to one as $u\to0$, uniformly for
$x\in K$, and \eqref{eq:quadratic-real-part} gives
$\Re\Psi(u)\geq c_0u^2$.  This proves
\eqref{eq:small-frequency-bound}.  If $u=v/\sqrt T$, then
\[
 T\Psi(v/\sqrt T)
 =\frac12\sigma_\beta^2v^2+O(|v|^3T^{-1/2}),
\]
which proves \eqref{eq:scaled-characteristic-limit}.
For $\delta\leq|u|\leq R$, Lemma~\ref{lemma:strict_spectral_gap}, together with Lemma~\ref{lem:real-boundary-smoothing},
 gives
\[
 |Z_T(\beta-iu,x)|
 \leq C \mathrm e^{-(\lambda+\eta)(T-2\tau)}
\]
uniformly in $x\in K$ and $u$ in the annulus.  On the other hand,
\eqref{eq:partition-asymptotic-rigorous} and positivity of $\phi_\beta$ give
\[
 \inf_{x\in K}Z_T(\beta,x)
 \geq c_K \mathrm e^{-\lambda T}
\]
for all sufficiently large $T$.  Using this, the representation obtained in \eqref{eq:characteristic} and  \eqref{eq:partition-asymptotic-rigorous}
verifies
\eqref{eq:annular-characteristic-bound}.
\end{proof}
\begin{proof}[Proof of Theorem \ref{thm:llt}]
Apply Lemma~\ref{lem:stone-smoothing} to the laws of $Y_T$ under
$\hat\prob^{\beta,T}_x$.  Its three hypotheses are precisely
\eqref{eq:small-frequency-bound},\eqref{eq:scaled-characteristic-limit}
 and
\eqref{eq:annular-characteristic-bound}.
\end{proof}

\begin{proof}[Proof of Theorem \ref{thm:main}]
        Choose $\beta = \beta_\theta>0$ so that $\lambda'(\beta )=\theta$, which is possible thanks to Lemma \ref{lem:range-constraint-coupling}. We first verify \eqref{equ:precise_asymptotics}.
    As before, we find
\begin{align*}
 \prob_x(A_T\leq\theta T+z)
 &=Z_T(\beta,x)\mathrm e^{\beta\theta T}
   \bbE^{\bbPh_x ^{\beta,T}}
   \left[\mathrm e^{\beta Y_T}\mathbf 1_{\{Y_T\leq z\}}\right].
\end{align*}
Put
\[h(y)=\exp(\beta y)\mathbf 1_{(-\infty,0]}(y).
\]
This function is directly Riemann-integrable and
\[\exp(\beta Y_T)\mathbf 1_{{Y_T\leq z}}=\exp(\beta z)h(Y_T-z),\qquad\int_{\mathbb R} \de y \, h(y)=\frac{1}{\beta}.\]
Applying Theorem~\ref{thm:llt} with translation parameter  $a=z $ therefore gives
\[\sqrt T\,\mathbb E^{\hat{\bbP}_x ^{\beta,T}}\left[\exp(\beta Y_T)\mathbf 1_{{Y_T\leq z}}\right]\longrightarrow\frac{\exp(\beta z)}{\beta\sqrt{2\pi\sigma_\beta^2}},
\]
locally uniformly in  $(x,z) $.
Combining this with Proposition~\ref{prop:partition-asymptotic-rigorous} then yields
\begin{align*}
 \prob_x(A_T\leq\theta T+z)
 &\sim
 \mathrm e^{-\lambda(\beta)T}\phi_\beta(x)\norm{\phi_\beta}_1
 \mathrm e^{\beta\theta T}
 \frac{\mathrm e^{\beta z}}
      {\beta\sqrt{2\pi\sigma_\beta^2T}},
\end{align*}
which is precisely \eqref{equ:precise_asymptotics}. For fixed $S\geq0$, $a\in\mathbb R$, and $y\in\mathbb R^d$, we therefore find
\begin{equation}
    \nonumber
 \frac{\prob_y(A_{T-S}\leq\theta T-a)}
      {\prob_x(A_T\leq\theta T)}
 \longrightarrow
 \mathrm e^{\lambda(\beta)S-\beta a}
 \frac{\phi_\beta(y)}{\phi_\beta(x)}.
\end{equation}
This is seen by
\[
 \theta T-a=\theta(T-S)+(\theta S-a)
\]
and applying \eqref{equ:precise_asymptotics} to the numerator with the fixed shift
$z=\theta S-a$, and using  $I_Q(\theta)+\beta\theta=\lambda(\beta)$.
Finally,
for $F\in\cF_S$, the Markov property gives
\begin{align}
 \prob_x(F\mid A_T\leq\theta T)
 =
 \bbE^{\bbP_x}\left[
  \mathbf 1_F
  \frac{
   \prob_{W_S}\bigl(\Tilde A_{T-S}\leq\theta T-  A_S\bigr)}
  {\prob_x(A_T\leq\theta T)}
 \right].
 \label{eq:markov-ratio}
\end{align}
Here, $\Tilde A_{T-S}$ is random with respect to the inner probability  $\prob_{W_S}$.
For every fixed $\mathbb R^d$-valued Brownian path on $[0,S]$, taking
$y=W_S$ and $a=A_S$ shows that the density in
\eqref{eq:markov-ratio} converges to
\[
 M_S^\beta
 =\mathrm e^{\lambda(\beta)S-\beta A_S}
  \frac{\phi_\beta(W_S)}{\phi_\beta(x)}.
\]
The approximating densities have integral one, and
$\bbE^{\bbP_x}[M_S^\beta]=1$ by Proposition~\ref{prop:ground-state-transform-rigorous}.  Scheff\'e's Lemma gives
$L^1(\prob_x)$ convergence of the densities, which is exactly total-variation
convergence on $\cF_S$.  Tightness on each $C([0,S];\mathbb R^d)$, followed by the usual
projective argument, gives weak convergence on $C([0,\infty);\mathbb R^d)$.
\end{proof}
\begin{proof}[Proof of Corollary \ref{cor:fixed-width-window}]
For $z\in\mathbb R$, write
\[
    F_T(z)=\mathbb P_x(A_T\leq\theta T+z).
\]
Abbreviate $E_T ^{a,b} = \{\theta T+a<A_T\leq \theta T+b \}$. Since
$\mathbb P_x(E_T^{a,b})=F_T(b)-F_T(a)$,
the locally uniform asymptotic in Theorem~\ref{thm:main} gives
\begin{align*}
\mathbb P_x(E_T^{a,b})
&\sim
\frac{\phi_{\beta_\theta}(x)\|\phi_{\beta_\theta}\|_1}
     {\beta_\theta\sqrt{2\pi\sigma_\theta^2T}}\,
\mathrm e^{-I_Q(\theta)T}
\bigl(\mathrm e^{\,\beta_\theta b}-\mathrm e^{\,\beta_\theta a}\bigr) 
=
\frac{\phi_{\beta_\theta}(x)\|\phi_{\beta_\theta}\|_1}
     {\sqrt{2\pi\sigma_\theta^2T}}\,
\mathrm e^{-I_Q(\theta)T}
\int_a^b \de y \, \mathrm e^{\, \beta_\theta y}.
\end{align*}
The same argument is locally uniform in $(x,a,b)$ as long as $a<b$.

It remains to prove the assertion about the conditional law.  Fix
$S<\infty$.  By the Markov property, the density on $\mathcal F_S$ of
the conditioned law with respect to $\mathbb P_x$ is
\[
D_{T,S}^{a,b}
=
\frac{
\mathbb P_{W_S}\!\left(
\theta(T-S)+a+\theta S-A_S<A_{T-S}
\leq\theta(T-S)+b+\theta S-A_S
\right)}
{\mathbb P_x(E_T^{a,b})}.
\]
For every fixed $\mathbb R^d$-valued Brownian path on $[0,S]$, the starting point $W_S$ and
the shifts
\[
    a+\theta S-A_S,
    \qquad
    b+\theta S-A_S
\]
are fixed real numbers.  Applying the first part of the Corollary with
time horizon $T-S$ therefore gives
\begin{align*}
D_{T,S}^{a,b}
&\longrightarrow
\mathrm e^{I_Q(\theta)S+\beta_\theta(\theta S-A_S)}
\frac{\phi_{\beta_\theta}(W_S)}
     {\phi_{\beta_\theta}(x)}  
=
\mathrm e^{\lambda(\beta_\theta)S-\beta_\theta A_S}
\frac{\phi_{\beta_\theta}(W_S)}
     {\phi_{\beta_\theta}(x)}
=M_S,
\end{align*}
where we used
    $I_Q(\theta)+\beta_\theta\theta
    =\lambda(\beta_\theta)$.
The random variable $M_S$ is precisely the ground-state-transform
density on $\mathcal F_S$; see also \eqref{eq:ground-state-sde-rigorous}.  Moreover, $\mathbb E^{\bbP_x}[M_S]=1$.
Since also $\mathbb E^{\bbP_x}[D_{T,S}^{a,b}]=1$, Scheff\'e's Lemma yields
\[
    D_{T,S}^{a,b}\longrightarrow M_S
    \qquad\text{in }L^1(\mathbb P_x),
\]
which is equivalent to the claimed total-variation convergence.
\end{proof}

\section{Extension to General Reference Markov Processes}
\label{sec:examples}

Although Theorem~\ref{thm:main} is formulated for Brownian motion, its
proof uses only a small number of structural properties of the reference
process.  We briefly isolate these properties before discussing concrete
examples which also verify these properties.
Let $X=(X_t)_{t\geq 0}$ be an $\R^d$-valued conservative Markov process which is reversible
with respect to a measure $\mu$, and let $L$ denote its self-adjoint
generator on $L^2(\mu)$.  For a nonnegative, nonconstant observable $Q$,
consider
\[
    A_T=\int_0^T \de s \, Q(X_s)
\]
and the Feynman-Kac operators
\[
    \mathsf H_z=-L+zQ,
    \qquad \Re z>0.
\]
The argument used for Brownian motion extends whenever the following
ingredients are available.

\textbf{(I)} First, the operators $\mathsf H_z$ should be defined by a holomorphic family
of closed sectorial forms with a common form domain.  For every $\beta>0$,
the real operator $\mathsf H_\beta$ should have compact resolvent and a
positivity-improving semigroup.  Its lowest eigenvalue
$\lambda(\beta)$ is then isolated and simple, with a strictly positive
normalized ground state $\psi_\beta$, and analytic perturbation theory
applies near every positive $\beta$. We assume the ground state to be continuous.

\textbf{(II)} Second, one needs spectral separation of the nonzero Fourier modes.  More
precisely, for every $u\neq0$,
\[
    \inf\bigl\{\Re\zeta:
        \zeta\in\spec(\mathsf H_{\beta-iu})\bigr\}
       >\lambda(\beta),
\]
and the gap must be uniform when $u$ ranges over a compact annulus
$\delta\leq |u|\leq R$.  For reversible diffusions and reversible Markov
chains, this follows from the Markovian inequality
\[
    \mathcal E(|f|,|f|)\leq \mathcal E(f,f)
\]
for the Dirichlet form $\mathcal E$ of $-L$, together with rigidity in the
equality case.  Indeed, equality at the ground-state energy forces
$|f|=\psi_\beta$ and a constant phase on the irreducible state space; the
eigenvalue equation for $\mathsf H_{\beta-iu}$ would then force $Q$ to be
constant.  Compactness of the form-domain embedding (due to the compact resolvent of $\mathsf H_\beta$) makes the resulting
strict gap uniform on compact frequency annuli.

\textbf{(III)} Third, the spectral estimates obtained in $L^2(\mu)$ must be transferable
to a process started from a fixed point.  It suffices to have a locally
uniform smoothing estimate of the form
\[
    \sup_{x\in D}|P_\tau f(x)|
       \leq C_{D,\tau}\|f\|_{L^2(\mu)}
\]
for every compact set $D$ and every $\tau>0$.  When $\mu$ is a probability
measure, one also has $\mathbf 1\in L^2(\mu)$ and therefore directly obtains
\[
    Z_T(z,x)
      =\mathbb E^{\bbP_x}\!\left[\mathrm e^{-zA_T}\right]
      =\bigl(\mathrm e^{-T\mathsf H_z}\mathbf 1\bigr)(x).
\]
This replaces the additional endpoint-smoothing construction needed in the
Brownian setting, where Lebesgue measure is infinite and
$\mathbf 1\notin L^2(\mathbb R^d)$.

\textbf{(IV)} Fourth, one must identify the range of $\lambda'$ and verify that
\[
    v_\beta=-\lambda''(\beta)>0.
\]
For every constraint value $\theta$ in this range, there is then a unique
$\beta_\theta>0$ satisfying $\lambda'(\beta_\theta)=\theta$.  

\textbf{(V)} Finally, we require that the ground-state transform is conservative,
or equivalently that
\[
    M_t^\beta
      =\mathrm e^{\lambda(\beta)t-\beta A_t}
        \frac{\psi_\beta(X_t)}{\psi_\beta(x)}
\]
is a mean-one martingale under $\bbP_x$.  This defines the transformed
path measure $\bbP_x^{(\beta)}$ on every $\mathcal F_t$ by
\[
    \frac{\de\bbP_x^{(\beta)}}{\de\bbP_x}\bigg|_{\mathcal F_t}
       =M_t^\beta.
\]
Conditions \textbf{(I)}-\textbf{(IV)} yield the corresponding local
limit theorem and sharp lower-tail asymptotic.  Condition \textbf{(V)}
then allows the Markov property and Scheff\'e arguments to identify the
conditional path limit on every fixed time interval as the conservative
ground-state transform generated by
\[
    L^{(\beta_\theta)}f
      =
      \psi_{\beta_\theta}^{-1}
      \bigl(\lambda(\beta_\theta)-\mathsf H_{\beta_\theta}\bigr)
      \bigl(\psi_{\beta_\theta}f\bigr).
\]

\begin{theorem}
\label{prop:reversible-extension-principle}
Let $X$ be an $\R^d$-valued conservative Markov process reversible with respect to a
probability measure $\mu$, and let $Q$ be a nonnegative, nonconstant
observable.  Suppose that conditions \textbf{(I)}-\textbf{(V)} above
hold.  Let
\[
    \theta\in\lambda'((0,\infty)),
    \qquad
    \lambda'(\beta_\theta)=\theta,
\]
and put
\[
    v_\theta=-\lambda''(\beta_\theta)>0,
    \qquad
    I_Q(\theta)
       =\lambda(\beta_\theta)-\beta_\theta\theta.
\]
Then, for every fixed initial state $x$ and every fixed $z\in\mathbb R$,
\[
    \bbP_x(A_T\leq\theta T+z)
    \sim
    \frac{
      \psi_{\beta_\theta}(x)
      \langle\psi_{\beta_\theta},\mathbf 1\rangle_{L^2(\mu)}
    }{
      \beta_\theta\sqrt{2\pi v_\theta T}
    }
    \exp\!\left(
       -I_Q(\theta)T+\beta_\theta z
    \right).
\]
Whenever the estimates in \textbf{(III)} are locally uniform in the
initial state, this asymptotic is locally uniform in $(x,z)$.

Moreover, for every $S<\infty$,
\[
    \mathcal L_{\bbP_x}
       \bigl((X_t)_{0\leq t\leq S}\mid A_T\leq\theta T\bigr)
    \longrightarrow
    \mathcal L_{\bbP_x^{(\beta_\theta)}}
       \bigl((X_t)_{0\leq t\leq S}\bigr)
\]
in total variation.  The corresponding fixed-width conclusion of
Corollary~\ref{cor:fixed-width-window} holds as well.
\end{theorem}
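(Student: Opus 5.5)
The plan is to rerun the Brownian argument of Section~\ref{sec:main_sec} with every Brownian-specific input replaced by the corresponding condition among \textbf{(I)}--\textbf{(V)}.

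\textbf{Step 1: real partition function.} Since $\mu$ is a probability measure, $\mathbf 1\in L^2(\mu)$ and $Z_T(z,x)=(\mathrm e^{-T\mathsf H_z}\mathbf 1)(x)$. Write $\mathrm e^{-T\mathsf H_\beta}=\mathrm e^{-\tau\mathsf H_\beta}\mathrm e^{-(T-\tau)\mathsf H_\beta}$. Feynman-Kac domination gives $|(\mathrm e^{-\tau\mathsf H_z}f)(x)|\le P_\tau|f|(x)$. Together with the smoothing bound in \textbf{(III)}, this gives a locally bounded point-evaluation functional $\ell_{\tau,u,x}$, which plays the role of Lemma~\ref{lem:real-boundary-smoothing}. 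No right-endpoint smoothing $g_\tau$ is needed. The spectral theorem for $\mathsf H_\beta$ from \textbf{(I)} gives the discrete spectrum, simplicity, and a strictly positive continuous $\psi_\beta$. Then, exactly as in Proposition~\ref{prop:partition-asymptotic-rigorous},
\[
Z_T(\beta,x)=\mathrm e^{-\lambda(\beta)T}\psi_\beta(x)\langle\psi_\beta,\mathbf 1\rangle_{L^2(\mu)}\bigl(1+O(\mathrm e^{-gT})\bigr),
\]
where the pointwise identity $\ell_{\tau,0,x}(\psi_\beta)=\mathrm e^{-\lambda\tau}\psi_\beta(x)$ uses continuity of $\psi_\beta$.

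\textbf{Step 2: Fourier estimates.} Proposition~\ref{prop:analytic-band} transfers verbatim. It uses only the holomorphic sectorial family, the isolated simple eigenvalue from \textbf{(I)}, and the cone bound $|\Im\zeta|\le(|u|/\beta)\Re\zeta$. That bound follows from $\Re\mathsf h_{\beta-iu}[\psi]=\mathcal E(\psi,\psi)+\beta\int Q|\psi|^2\,\de\mu$ and $\Im\mathsf h_{\beta-iu}[\psi]=-u\int Q|\psi|^2\,\de\mu$. The positivity $\sigma^2=v_\beta>0$ needed for \eqref{eq:quadratic-real-part} is supplied by \textbf{(IV)}. The annulus bound \eqref{eq:uniform-gap} is supplied directly by \textbf{(II)} together with Lemma~\ref{lem:spectral-to-semigroup}, since $\Re\mathsf h_{\beta-iu}=\mathsf h_\beta\ge\lambda(\beta)$. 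Substituting these into the proof of Lemma~\ref{lem:fourier-estimates}, with $a_x(u)=\mathrm e^{\tau\lambda_u}\ell_{u,x}(P_u\mathbf 1)$ and $a_x(0)=\psi_\beta(x)\langle\psi_\beta,\mathbf 1\rangle$, yields \eqref{eq:small-frequency-bound}--\eqref{eq:annular-characteristic-bound}. These hold for fixed $x$, and locally uniformly whenever the constants in \textbf{(III)} are. Lemma~\ref{lem:stone-smoothing} then gives the LLT of Theorem~\ref{thm:llt} for $Y_T=A_T-\lambda'(\beta)T$.

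\textbf{Step 3: tail asymptotic.} Untilting with $h(y)=\mathrm e^{\beta y}\mathbf 1_{(-\infty,0]}(y)$ is identical to the proof of Theorem~\ref{thm:main} and gives the tail asymptotic with $\|\phi\|_1$ replaced by $\langle\psi_{\beta_\theta},\mathbf 1\rangle_{L^2(\mu)}$.

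\textbf{Step 4: conditional path limit.} Apply the Markov property at time $S$. The ratio $\bbP_y(A_{T-S}\le\theta T-a)/\bbP_x(A_T\le\theta T)$ converges to $\mathrm e^{\lambda S-\beta a}\psi_\beta(y)/\psi_\beta(x)$. This needs the tail asymptotic at the random point $y=X_S$ and the random shift $a=A_S$, but only pathwise, i.e.\ for fixed values, so the pointwise-in-$x$ statement suffices. The limiting density is $M_S^\beta$, which has mean one by \textbf{(V)}. Each approximating density has mean one, so Scheff\'e's lemma gives total-variation convergence. The fixed-width corollary follows by differencing as in Corollary~\ref{cor:fixed-width-window}. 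When only pointwise uniformity is available, differencing $F_T(b)-F_T(a)$ at fixed $a<b$ still works.

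\textbf{Main obstacle.} The delicate point is Step~1/2 in the non-Brownian setting: making sure the point evaluations $\ell_{u,x}$ are continuous in $u$, and that the pointwise eigenfunction identity holds. I would derive both from Feynman-Kac domination by $P_\tau$, the estimate $|\mathrm e^{-(\beta-iu)A}-\mathrm e^{-(\beta-iv)A}|\le|u-v|A\,\mathrm e^{-\beta A}\le|u-v|/(\mathrm e\beta)$, and \textbf{(III)}. Continuity of $\psi_\beta$ from \textbf{(I)} then identifies $\ell_{\tau,0,x}(\psi_\beta)$ with the continuous version. I would check this step first.
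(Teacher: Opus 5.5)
Your proposal is correct and follows the paper's proof: the paper also reruns the Brownian argument with \textbf{(I)}--\textbf{(V)} substituted, namely one-sided smoothing since $\mathbf 1\in L^2(\mu)$, the LLT via Lemma~\ref{lem:stone-smoothing}, untilting, and the Markov/Scheff\'e step, and your version simply supplies more detail. One simplification: the pointwise identity $\mathbb E_x[\mathrm e^{-\beta A_\tau}\psi_\beta(X_\tau)]=\mathrm e^{-\lambda(\beta)\tau}\psi_\beta(x)$ that you flag as the main obstacle is exactly the mean-one statement in \textbf{(V)}, so you can cite it directly rather than upgrading the $\mu$-a.e.\ $L^2$ identity via continuity of $\psi_\beta$, which would additionally require regularity in $x$ of the Feynman--Kac expectation.
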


\begin{proof}
Condition \textbf{(I)} gives the analytic ground-state expansion near
zero frequency, while \textbf{(II)} gives exponential contraction on
every compact frequency annulus.  Condition \textbf{(III)} transfers
these $L^2(\mu)$ estimates to a process started from a fixed state.
Consequently, the proof of Theorem~\ref{thm:llt} applies verbatim and
gives the corresponding local limit theorem.

Condition \textbf{(IV)} identifies the unique tilting parameter
$\beta_\theta$ and guarantees that the limiting variance is strictly
positive.  The untilting calculation in the proof of
Theorem~\ref{thm:main} then gives the displayed sharp asymptotic.
Finally, \textbf{(V)}, the Markov property and Scheff\'e arguments used there identify
the limiting density on $\mathcal F_S$ as
\[
    M_S^{\beta_\theta}
      =\mathrm e^{\lambda(\beta_\theta)S-\beta_\theta A_S}
        \frac{\psi_{\beta_\theta}(X_S)}
             {\psi_{\beta_\theta}(x)}.
\]
This proves the total-variation convergence.  Taking differences of the
locally uniform tail asymptotics proves the fixed-width assertion.
\end{proof}

We now verify the provided
criteria in three concrete examples. In all examples, condition \textbf{(V)} is fulfilled as it simply follows from the eigenvalue equation
\[(\mathrm e^{-tH_\beta}\psi_\beta)(x) = \mathrm e^{-t\lambda(\beta)}\psi_\beta(x).\]
Indeed, this identity implies that $M_t ^\beta$ has mean one, and the Markov property then shows that $(M_t ^\beta)_{t \ge 0}$ is a martingale.

\subsection{Reversible Ornstein-Uhlenbeck processes}
\label{rem:ou-extension}
Let $K$ be a symmetric positive-definite $d\times d$ matrix and let
$\sigma>0$.  Consider
\[
    \de X_t=-KX_t\,\de t+\sigma\,\de B_t,
    \qquad X_0=x\in\mathbb R^d.
\]
Denote the corresponding path measure by $\bbP_x^{\mathrm{OU}}$.  This
Ornstein-Uhlenbeck process is reversible with respect to
\[
    \mu_K(\de x)
      =\frac{(\det K)^{1/2}}{(\pi\sigma^2)^{d/2}}
        \exp\!\left(-\frac{x^{\mathsf T}Kx}{\sigma^2}\right)\de x,
\]
and its generator is
\[
    L_K=\frac{\sigma^2}{2}\Delta-(Kx)\cdot\nabla.
\]
For a continuous, nonnegative, and nonconstant observable $Q$, replace
$H_z$ by
\[
    K_z=-L_K+zQ
    \qquad\text{on }L^2(\mu_K).
\]

The Ornstein-Uhlenbeck process satisfies the structural requirements
described above.  An important difference from Brownian motion is that the
reference dynamics are already confining.  In particular, the unperturbed
operator $-L_K$ has compact resolvent, so bounded nonconstant observables
are permitted as well.  This can also be seen through a unitary
transformation.  If $\rho_K$ is the density of $\mu_K$ and
$Uf=\rho_K^{1/2}f$, then
\[
    UK_zU^{-1}
      =-\frac{\sigma^2}{2}\Delta
        +\frac{x^{\mathsf T}K^2x}{2\sigma^2}
        -\frac12\operatorname{tr}K+zQ(x).
\]
Thus the OU drift itself supplies the confining quadratic potential.

Let $\lambda(\beta)$ be the lowest eigenvalue of $K_\beta$, and let
$\psi_\beta>0$ be its normalized ground state in $L^2(\mu_K)$.  For
\[
    \theta\in\Theta_Q^{\mathrm{OU}}
       = \{ \lambda'(\beta): \beta \in (0,\infty) \},
    \qquad
    \lambda'(\beta_\theta)=\theta,
\]
we now spell out the precise replacements required in the Brownian proof.

\begin{enumerate}
\item[(i)] \emph{Weighted spectral problem.}
Lebesgue measure and the Brownian Dirichlet form are replaced by
$\mu_K$ and
\[
    \mathcal E_K(f,g)
      =\frac{\sigma^2}{2}
        \int_{\mathbb R^d}\mu_K (\de x) \, \nabla f (x) \cdot\nabla\overline g (x),
\]
with common form domain
\[
    \mathcal D_{K,Q}
      =W^{1,2}(\mu_K)\cap
        L^2(\mathbb R^d,Q(x)\, \mu_K(\de x)).
\]
The Hermite decomposition of $-L_K$ gives compact resolvent; hence the
embedding $\mathcal D_{K,Q}\hookrightarrow L^2(\mu_K)$ is compact as well.
The Mehler kernel is strictly positive \cite{Stroock20,Bogachev}, so the Feynman-Kac semigroup is
positivity improving.  Consequently, the lowest eigenvalue is simple, and
the form family $K_z=-L_K+zQ$ is holomorphic on $\Re z>0$.  The
Hellmann-Feynman and reduced-resolvent arguments used for
Propositions~\ref{prop:ground-state-spectral-theory} and
\ref{prop:hellmann-feynman} therefore give
\[
    \lambda'(\beta)=\int \mu_K(\de x)\,  Q(x)\psi_\beta^2(x),
    \qquad
    \lambda''(\beta)<0.
\]

\item[(ii)] \emph{Range of the constraint.}
Lemma~\ref{lem:range-constraint-coupling} is specific to Brownian motion and
requires a replacement.  Since $\mu_K$ is a probability measure, the unperturbed ground state is the constant function $\mathbf 1$, and $\lambda'(0+)$ is no longer automatically infinite.  For the quadratic observable below, perturbation at $\beta=0$ gives
\[
    \lambda'(0+)=\int_{\mathbb R^d}\mu_K(\de x) \, Q(x).
\]
Thus the OU extension concerns lower deviations, with
$\theta\in\Theta_Q^{\mathrm{OU}}=\lambda'((0,\infty))$.  For the quadratic
case this becomes the explicit interval displayed below.

\item[(iii)] \emph{Nonzero Fourier modes.}
In Lemma~\ref{lemma:strict_spectral_gap}, replace $H_{\beta-iu}$ by
$K_{\beta-iu}$ and use once again the diamagnetic inequality to obtain
\[
    \mathcal E_K(|f|,|f|)\leq\mathcal E_K(f,f).
\]
Equality forces $|f|=\psi_\beta$ and a constant phase on the connected state
space $\mathbb R^d$; the eigenvalue equation would then force $Q$ to be
constant.  This proves the strict gap for $u\neq0$.  Compactness of the
weighted form-domain embedding gives the uniform gap on every annulus
$\delta\leq|u|\leq R$.  The analytic small-frequency expansion is then the
same one used in Lemma~\ref{lem:fourier-estimates}.

\item[(iv)] \emph{A process started from a point.}
Here $\mathbf 1\in L^2(\mu_K)$, so the smoothing construction in
Lemma~\ref{lem:real-boundary-smoothing} is unnecessary.  Instead, we directly find
\[
    Z_T(z,x)=\mathbb E^{\bbP_x ^\mathrm{OU}}[\mathrm e^{-zA_T}]
            =(\mathrm e^{-TK_z}\mathbf 1)(x).
\]
If $P_t^K$ denotes the OU semigroup, its Mehler kernel satisfies, for every
compact $D\subset\mathbb R^d$ and $\tau>0$,
\[
    \sup_{x\in D}|P_\tau^Kf(x)|
       \leq C_{D,\tau}\|f\|_{L^2(\mu_K)};
\]
see for example \cite{Stroock20}.
Applying this estimate after the $L^2(\mu_K)$ spectral expansion yields,
locally uniformly in $x$,
\[
    Z_T(\beta,x)
      =\mathrm e^{-\lambda(\beta)T}\psi_\beta(x)
       \langle\psi_\beta,\mathbf 1\rangle_{L^2(\mu_K)}
       \bigl(1+O(\mathrm e^{-cT})\bigr).
\]
The same smoothing estimate, combined with complex Feynman-Kac domination,
passes the nonzero-frequency $L^2$ bounds to fixed starting points.

\end{enumerate}

\begin{corollary}
\label{cor:ou-extension}
Let $K$ be symmetric positive definite, let $\sigma>0$, and let
$Q:\mathbb R^d\to[0,\infty)$ be continuous and nonconstant.  Define
\[
    A_T=\int_0^T \de s \, Q(X_s)
\]
for the Ornstein-Uhlenbeck process
\[
    \de X_t=-KX_t\,\de t+\sigma\,\de B_t,
    \qquad X_0=x.
\]
Let $\lambda(\beta)$ and $\psi_\beta$ be the lowest eigenvalue and
normalized positive ground state of
\[
    K_\beta=-L_K+\beta Q
    \qquad\text{on }L^2(\mu_K).
\]
For $\theta\in\Theta_Q^{\mathrm{OU}}
       =\lambda'((0,\infty))$ and $\lambda'(\beta_\theta)=\theta$,
put $v_\theta=-\lambda''(\beta_\theta)>0$ as well as 
\[
    I_Q^{\mathrm{OU}}(\theta)
       =\lambda(\beta_\theta)-\beta_\theta\theta.
\]
Then, for every fixed $z\in\mathbb R$,
\[
    \bbP_x^{\mathrm{OU}}(A_T\leq\theta T+z)
    \sim
    \frac{
      \psi_{\beta_\theta}(x)
      \langle\psi_{\beta_\theta},\mathbf 1\rangle_{L^2(\mu_K)}
    }{
      \beta_\theta\sqrt{2\pi v_\theta T}
    }
    \exp\!\left(
       -I_Q^{\mathrm{OU}}(\theta)T+\beta_\theta z
    \right).
\]
The asymptotic is locally uniform in $(x,z)$.

Moreover, on every fixed time interval, the conditioned process converges
in total variation to the ground-state diffusion
\[
    \de Y_t
      =
      \left[
        -KY_t+\sigma^2\nabla\log
        \psi_{\beta_\theta}(Y_t)
      \right]\de t
      +\sigma\,\de B_t,
    \qquad Y_0=x.
\]
\end{corollary}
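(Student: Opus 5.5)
The plan is to obtain Corollary~\ref{cor:ou-extension} as a direct instance of Theorem~\ref{prop:reversible-extension-principle}, with $X$ the OU process, $\mu=\mu_K$, $L=L_K$ and $\mathsf H_z=K_z$. So it suffices to check conditions \textbf{(I)}--\textbf{(V)}. Most of this is already sketched in items (i)--(iv) above. I would make each item rigorous by passing to the unitarily equivalent Schr\"odinger operator
\[
UK_zU^{-1}=-\tfrac{\sigma^2}{2}\Delta+V_K+zQ,\qquad V_K(x)=\frac{x^{\mathsf T}K^2x}{2\sigma^2}-\tfrac12\operatorname{tr}K .
\]
After rescaling the kinetic coefficient, this has the form treated in Section~\ref{sec:main_sec}, but with the nonnegative (up to a constant) confining potential $V_K$ and with only the term $zQ$ carrying the complex parameter.

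\textbf{(I)} The forms $\mathcal E_K+z\int Q|f|^2\,\de\mu_K$ on $\mathcal D_{K,Q}$ form a holomorphic type (a) family for $\Re z>0$, because $Q\geq0$. The embedding into $L^2(\mu_K)$ is compact: the form dominates that of $-L_K$, which has compact resolvent by the Hermite decomposition. The semigroup is positivity improving, since the Feynman--Kac weight $\mathrm e^{-\beta A_t}$ is strictly positive and the Mehler kernel is strictly positive. Hence $\lambda(\beta)$ is simple with positive ground state $\psi_\beta$. Continuity of $\psi_\beta$ follows from elliptic regularity for the conjugated operator with its continuous potential, or from $\psi_\beta=\mathrm e^{\lambda t}\mathrm e^{-tK_\beta}\psi_\beta$ together with the continuity of the Mehler kernel. \textbf{(III)} follows from the explicit Gaussian form of the Mehler kernel, $p^K_\tau(x,\cdot)\in L^2(\mu_K)$, locally uniformly in $x$, combined with Cauchy--Schwarz and Feynman--Kac domination $|\mathrm e^{-\tau K_{\beta-iu}}f|\leq P^K_\tau|f|$. \textbf{(V)} follows from the eigenvalue identity, as already remarked before the subsection.

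\textbf{(II)} I would copy the proof of Lemma~\ref{lemma:strict_spectral_gap} verbatim in $L^2(\mu_K)$. Taking real parts gives $\Re\zeta=\mathcal E_K(\psi,\psi)+\beta\int Q|\psi|^2\,\de\mu_K$. The diamagnetic inequality $|\nabla|\psi||\leq|\nabla\psi|$ holds pointwise, so Rayleigh--Ritz yields $\Re\zeta\geq\lambda(\beta)$. In the equality case, $|\psi|=\psi_\beta$ and $\int\psi_\beta^2|\nabla q|^2\,\de\mu_K=0$. Connectedness then makes the phase $q$ constant, and the eigenvalue equation forces $Q$ to be constant, which is a contradiction. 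Uniformity on the annuli uses the same compactness argument with $\mathcal D_{K,Q}\hookrightarrow L^2(\mu_K)$.

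\textbf{(IV)} I would first establish $\lambda'(\beta)=\int Q\psi_\beta^2\,\de\mu_K$ by Hellmann--Feynman. Next, $\lambda''(\beta)=-2\langle (Q-\lambda')\psi_\beta,R_\beta(Q-\lambda')\psi_\beta\rangle$, where $R_\beta$ is the reduced resolvent, which is positive on the complement of $\psi_\beta$. This gives $\lambda''\leq0$, with equality only if $(Q-\lambda')\psi_\beta=0$. That would make $Q$ constant, so $\lambda''<0$. Here one must check that $Q\psi_\beta\in L^2(\mu_K)$. I expect this to be the main obstacle, because $Q$ is merely continuous and possibly unbounded, so there is no a priori control of $Q\psi_\beta$. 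My first attempt would be to avoid the issue by using the form version: $\lambda$ is concave as an infimum of affine functions of $\beta$, and $\lambda$ is analytic. Strict concavity then reduces to ruling out that $\lambda$ is affine on an interval. If $\lambda$ were affine, $\psi_\beta$ would be optimal for all nearby $\beta$, hence independent of $\beta$, and subtracting the eigenvalue equations at two values of $\beta$ gives $(\beta_1-\beta_2)Q\psi=\text{const}\cdot\psi$, so $Q$ would be constant.

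Since $\lambda'$ is continuous and strictly decreasing, each $\theta\in\Theta_Q^{\mathrm{OU}}$ has a unique $\beta_\theta$. Theorem~\ref{prop:reversible-extension-principle} then yields the asymptotic, locally uniformly in $(x,z)$ by the local uniformity in (III), and it yields TV convergence to $\bbP^{(\beta_\theta)}_x$. Finally, I would identify that law with the stated SDE. A direct computation gives the generator
\[
L^{(\beta)}f=\psi_\beta^{-1}\bigl(\lambda-K_\beta\bigr)(\psi_\beta f)=L_Kf+\sigma^2\nabla\log\psi_\beta\cdot\nabla f .
\]
This is the generator of the displayed diffusion, which justifies that SDE in the weak sense, as in Proposition~\ref{prop:ground-state-transform-rigorous}.
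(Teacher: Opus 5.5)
Your overall route matches the paper's: verify \textbf{(I)}--\textbf{(V)} for $K_z$ on $L^2(\mu_K)$ and invoke Theorem~\ref{prop:reversible-extension-principle}. Your treatment of (I), (II), (III), (V) and the generator computation is fine.

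The gap is in (IV). The argument you settle on is that $\lambda$ is concave and real analytic, and not affine on any interval because otherwise $Q$ would be constant. This only shows that $\lambda$ is \emph{strictly concave}. For an analytic concave function that means $\lambda''\leq 0$ with at most isolated zeros; think of $-(\beta-\beta_0)^4$ near $\beta_0$. It gives strict monotonicity of $\lambda'$, hence uniqueness of $\beta_\theta$. It does not exclude $\lambda''(\beta_\theta)=0$ at the one coupling you actually need.

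Pointwise positivity $v_\theta=-\lambda''(\beta_\theta)>0$ is part of the statement, and it is essential. It is what gives $\Re\Psi(u)\geq c_0u^2$ in \eqref{eq:quadratic-real-part}. It also gives the nondegenerate Gaussian limit \eqref{eq:scaled-characteristic-limit} and the hypothesis $\sigma^2>0$ in Lemma~\ref{lem:stone-smoothing}. Without it, neither the local limit theorem nor the $T^{-1/2}$ prefactor follows.

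The obstacle that made you abandon the reduced-resolvent formula is not a real one. You only need $(Q-\theta_\beta)\psi_\beta$ as an element of the dual of the form domain, which is how Proposition~\ref{prop:hellmann-feynman} proceeds; the paper transfers exactly that argument to the OU setting.

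\begin{itemize}
\item \emph{Boundedness.} Both $\psi_\beta$ and every $g\in\mathcal D_{K,Q}$ lie in $L^2(Q\,\de\mu_K)$. By Cauchy--Schwarz, $L_\beta(g)=\int(Q-\theta_\beta)\psi_\beta g\,\de\mu_K$ is therefore a bounded functional on $\mathcal D_{K,Q}$, with no need to know $Q\psi_\beta\in L^2(\mu_K)$.
\item \emph{Lax--Milgram.} On the real $\psi_\beta$-orthogonal complement in $\mathcal D_{K,Q}$, the form $b_\beta=\mathcal E_K+\beta\int Q|\cdot|^2\,\de\mu_K-\lambda(\beta)\|\cdot\|^2$ is coercive by the spectral gap. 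Lax--Milgram gives $u_\beta$ with $b_\beta[u_\beta,g]=L_\beta(g)$.
\item \emph{Second derivative.} The difference-quotient argument of that proposition shows $\partial_\beta\psi_\beta=-u_\beta$ in form norm. Hence $\lambda''(\beta)=-2\int Q\psi_\beta u_\beta\,\de\mu_K=-2b_\beta[u_\beta,u_\beta]$.
\item \emph{Nonvanishing.} If $u_\beta=0$, then $L_\beta$ vanishes on the complement, and also on $\psi_\beta$ by the definition of $\theta_\beta$. So $L_\beta=0$ on $\mathcal D_{K,Q}$, forcing $(Q-\theta_\beta)\psi_\beta=0$, i.e.\ $Q$ constant.
\end{itemize}

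This yields $\lambda''(\beta)<0$ for every $\beta>0$, which is what (IV) requires.
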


\begin{proof}
The verification of conditions \textbf{(I)}-\textbf{(IV)} was given
above.  The result therefore follows from
Theorem~\ref{prop:reversible-extension-principle}.
\end{proof}
The quadratic case yields a rather explicit and very nice form; see also
\cite{DelMoralHorton} for a substantially broader Riccati formulation for
linear diffusions.  Let
$Q(x)=x^{\mathsf T}Cx$, where $C$ is symmetric positive definite, and set
$R_\beta=(K^2+2\beta\sigma^2C)^{1/2}$, where the positive-definite square root is meant.  Direct substitution in
the transformed oscillator gives
\[
    \lambda(\beta)
      =\frac12\operatorname{tr}(R_\beta-K),
    \qquad
    \lambda'(\beta)
      =\frac{\sigma^2}{2}\operatorname{tr}(R_\beta^{-1}C),
\]
and
\[
    \psi_\beta(x)
      =\left(\frac{\det R_\beta}{\det K}\right)^{1/4}
       \exp\!\left(
          -\frac{x^{\mathsf T}(R_\beta-K)x}{2\sigma^2}
       \right).
\]
Consequently,
\[
\psi_\beta(x)\langle\psi_\beta,\mathbf 1\rangle_{L^2(\mu_K)}
=\left(\frac{\det(2R_\beta)}{\det(R_\beta+K)}\right)^{1/2}
 \exp\!\left(
    -\frac{x^{\mathsf T}(R_\beta-K)x}{2\sigma^2}
 \right),
\]
and the limiting process is the Ornstein-Uhlenbeck process
\[
    \de Y_t=-R_{\beta_\theta}Y_t\,\de t+\sigma\,\de B_t.
\]
Its invariant covariance is $\sigma^2R_{\beta_\theta}^{-1}/2$, and hence
the invariant mean of $Q$ is $\frac{\sigma^2}{2}
    \operatorname{tr}(CR_{\beta_\theta}^{-1})=\theta$.
If $K$ and $C$ commute, choose coordinates in which
$K=\operatorname{diag}(k_1,\ldots,k_d)$ and
$C=\operatorname{diag}(c_1,\ldots,c_d)$.  With
\[
    r_j(\beta)=\sqrt{k_j^2+2\beta\sigma^2c_j},
\]
one has
\[
    \lambda(\beta)=\frac12\sum_{j=1}^d(r_j(\beta)-k_j),
    \quad
    \lambda'(\beta)=\frac{\sigma^2}{2}
       \sum_{j=1}^d\frac{c_j}{r_j(\beta)},
    \quad
    -\lambda''(\beta)=\frac{\sigma^4}{2}
       \sum_{j=1}^d\frac{c_j^2}{r_j(\beta)^3}.
\]
Thus $\beta_\theta$ is the unique positive solution of the middle equation
for $0<\theta<\frac{\sigma^2}{2} \operatorname{tr}(CK^{-1})$,
and all constants in the sharp tail are explicit up to this scalar equation.
For the isotropic choice $K=k I_d$ and $C=I_d$, the scalar equation can
also be solved explicitly.  Writing $r_\theta=\frac{d\sigma^2}{2\theta}$ and 
$0<\theta<\frac{d\sigma^2}{2k}$, 
we obtain
\[
    \beta_\theta
      =\frac{d^2\sigma^2}{8\theta^2}
       -\frac{k^2}{2\sigma^2}.
\]
Moreover,
\[
\psi_{\beta_\theta}(x)
\langle\psi_{\beta_\theta},\mathbf 1\rangle_{L^2(\mu_K)}
=\left(\frac{2r_\theta}{r_\theta+k}\right)^{d/2}
 \exp\!\left(-\frac{(r_\theta-k)|x|^2}{2\sigma^2}\right),
\]
and the limiting process solves
\[
    \de Y_t=-\frac{d\sigma^2}{2\theta}Y_t\,\de t+\sigma\,\de B_t.
\]
Its invariant law is centered Gaussian and satisfies
$\mathbb E [|Y|^2]=\theta$, exactly matching the imposed energy density.

\subsection{Cox-Ingersoll-Ross processes}
\label{ex:cir-reference}
A non-Gaussian one-dimensional example is provided by the
Cox-Ingersoll-Ross (CIR) process \cite{CIR}
\[
    \de X_t=\kappa(\eta-X_t)\,\de t
      +\sigma\sqrt{X_t}\,\de B_t,
    \qquad X_0=x>0,
\]
where $\kappa,\eta,\sigma>0$ and $2\kappa\eta\geq\sigma^2$.  Its generator is
\[
    L_{\mathrm{CIR}}f(x)
      =\frac{\sigma^2x}{2}f''(x)+\kappa(\eta-x)f'(x),
\]
and its reversible invariant law is the Gamma distribution
\[
    \mu_{\mathrm{CIR}}(\de x)
      =\frac{\delta_\kappa^\alpha}{\Gamma(\alpha)}
        x^{\alpha-1}\mathrm e^{-\delta_\kappa x}\,\de x,
    \qquad
    \alpha=\frac{2\kappa\eta}{\sigma^2},
    \quad
    \delta_\kappa=\frac{2\kappa}{\sigma^2}.
\]
For completeness, the proof justification here is the weighted analogue of
the OU discussion above.  The Dirichlet form is
\[
    \mathcal E_{\mathrm{CIR}}(f,g)
      =\frac{\sigma^2}{2}\int_0^\infty
        \mu_{\mathrm{CIR}}(\de x)\,x f'(x)\overline{g'(x)}.
\]
The generalized Laguerre polynomials diagonalize $-L_{\mathrm{CIR}}$, with
spectrum $\{n\kappa:n\in\mathbb N_0\}$, so the weighted form-domain
embedding is compact; the CIR transition kernel supplies the locally uniform
$L^2(\mu_{\mathrm{CIR}})$-to-pointwise smoothing estimate; see
\cite[Theorem~6.1]{MendozaLinetsky}.  The Feller condition fixes the
conservative boundary realization at $0$.  
Hence the spectral-gap, Fourier,
untilting, and ground-state-transform steps above apply. 
The range lemma is again replaced: here
$\lambda'(0+)=\int_0 ^\infty \mu_{\mathrm{CIR}}(\de x)\, x=\eta$, which explains the lower-deviation range $0<\theta<\eta$ below.

We specialize to  $A_T=\int_0^T\de s \,  X_s$ and take $r_\beta=\sqrt{\kappa^2+2\beta\sigma^2}$ together with $c_\beta=\frac{r_\beta-\kappa}{\sigma^2}$.
The normalized positive ground state of
$-L_{\mathrm{CIR}}+\beta x$ in $L^2(\mu_{\mathrm{CIR}})$ and the corresponding
ground energy are
\[
    \psi_\beta(x)
      =\left(\frac{r_\beta}{\kappa}\right)^{\alpha/2}
        \mathrm e^{-c_\beta x},
    \qquad
    \lambda(\beta)=\kappa\eta c_\beta.
\]
Consequently $\lambda'(\beta)=\frac{\kappa\eta}{r_\beta}$ and $-\lambda''(\beta)=\frac{\kappa\eta\sigma^2}{r_\beta^3}$.
For $0<\theta<\eta$, the equation $\lambda'(\beta_\theta)=\theta$ gives
\[
    r_{\beta_\theta}=\frac{\kappa\eta}{\theta},
    \qquad
    \beta_\theta
      =\frac{\kappa^2}{2\sigma^2}
        \left(\frac{\eta^2}{\theta^2}-1\right),
\]
    and hence
\[
    I_x^{\mathrm{CIR}}(\theta)
      =\frac{\kappa^2(\eta-\theta)^2}{2\sigma^2\theta},
    \qquad
    v_\theta = -\lambda''(\beta_\theta)
      =\frac{\sigma^2\theta^3}{\kappa^2\eta^2}.
\]
The endpoint factor in the sharp asymptotic is explicit and evaluates to
\[
    \psi_{\beta_\theta}(x)
    \langle\psi_{\beta_\theta},\mathbf 1\rangle_{L^2(\mu_{\mathrm{CIR}})}
      =\left(\frac{2r_{\beta_\theta}}
                    {\kappa+r_{\beta_\theta}}\right)^\alpha
        \exp\!\left(-\frac{(r_{\beta_\theta}-\kappa)x}{\sigma^2}\right).
\]

\begin{corollary}
\label{cor:cir-extension}
Let $X$ be the CIR process above, take
$A_T=\int_0^T \de s \, X_s$
and fix $0<\theta<\eta$.  With $r_{\beta_\theta}=\frac{\kappa\eta}{\theta}$ and $\beta_\theta
      =\frac{\kappa^2}{2\sigma^2}
        \left(\frac{\eta^2}{\theta^2}-1\right)$,
put
\[
    I_x^{\mathrm{CIR}}(\theta)
      =\frac{\kappa^2(\eta-\theta)^2}{2\sigma^2\theta},
    \qquad
    v_\theta
      =\frac{\sigma^2\theta^3}{\kappa^2\eta^2}.
\]
Then, for every fixed $x>0$ and $z\in\mathbb R$,
\[
\begin{split}
    \bbP_x^{\mathrm{CIR}}(A_T\leq\theta T+z)
    \sim{}&
    \frac{1}{\beta_\theta\sqrt{2\pi v_\theta T}}
    \left(
       \frac{2r_{\beta_\theta}}
            {\kappa+r_{\beta_\theta}}
    \right)^\alpha
    \exp\!\left(
       -\frac{(r_{\beta_\theta}-\kappa)x}{\sigma^2}
    \right)
    \exp\!\left(
       -I_x^{\mathrm{CIR}}(\theta)T+\beta_\theta z
    \right).
\end{split}
\]
On every fixed time interval, the conditioned process converges in total
variation to the CIR process
\[
    \de Y_t
      =r_{\beta_\theta}(\theta-Y_t)\,\de t
       +\sigma\sqrt{Y_t}\,\de B_t,
    \qquad Y_0=x.
\]
Its invariant law is Gamma with shape $\alpha$ and mean $\theta$.
\end{corollary}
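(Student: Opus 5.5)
The plan is to derive Corollary~\ref{cor:cir-extension} from Theorem~\ref{prop:reversible-extension-principle} by verifying conditions \textbf{(I)}--\textbf{(V)} for the CIR generator with $Q(x)=x$. Once these hold, it remains to compute $\lambda$, $\psi_\beta$, $\beta_\theta$, $I$, $v_\theta$ and the endpoint factor explicitly, and to identify the ground-state transform.

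\textbf{Structural conditions.} For \textbf{(I)}, I use the Laguerre diagonalization of $-L_{\mathrm{CIR}}$ in $L^2(\mu_{\mathrm{CIR}})$, which has eigenvalues $n\kappa$. This gives compact resolvent of $-L_{\mathrm{CIR}}$. Since $\beta x\ge0$, the form domain of $-L_{\mathrm{CIR}}+zx$ is $\mathcal D=\mathrm{dom}(\mathcal E_{\mathrm{CIR}})\cap L^2(x\,\mu_{\mathrm{CIR}})$, which embeds compactly into $L^2(\mu_{\mathrm{CIR}})$. The family $z\mapsto \mathcal E_{\mathrm{CIR}}+z\langle x\cdot,\cdot\rangle$ is holomorphic of type (a) on $\Re z>0$. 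Positivity improvement follows from strict positivity of the CIR transition density (noncentral $\chi^2$) on $(0,\infty)$ under the Feller condition, together with the Feynman--Kac representation.

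For \textbf{(II)}, I repeat the argument of Lemma~\ref{lemma:strict_spectral_gap}, using $\mathcal E_{\mathrm{CIR}}(|f|,|f|)\le\mathcal E_{\mathrm{CIR}}(f,f)$ with the pointwise diamagnetic inequality $|(|f|)'|\le|f'|$. In the equality case the phase $q$ satisfies $\int x\psi_\beta^2|q'|^2\,\mu(\de x)=0$. Since the weight is positive on the connected set $(0,\infty)$, $q$ is constant, and the eigenvalue equation then forces $x$ to be constant, a contradiction. Compactness of $\mathcal D\hookrightarrow L^2$ gives uniformity on annuli exactly as before.

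\textbf{(III)} is the cited pointwise smoothing estimate \cite[Theorem~6.1]{MendozaLinetsky}, locally uniform on compacts of $(0,\infty)$. Complex Feynman--Kac domination $|\mathrm e^{-zA}|\le \mathrm e^{-\beta A}$ transfers it to the complex semigroups. \textbf{(V)} follows from the eigenvalue identity, as noted before the OU subsection.

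\textbf{Explicit spectrum.} I guess $\psi=\mathrm e^{-cx}$. Then
\[
(-L_{\mathrm{CIR}}+\beta x)\mathrm e^{-cx}=\bigl(-\tfrac{\sigma^2}{2}c^2x+\kappa c(\eta-x)+\beta x\bigr)\mathrm e^{-cx}.
\]
The coefficient of $x$ vanishes iff $\frac{\sigma^2}{2}c^2+\kappa c-\beta=0$. The positive root is $c_\beta=(r_\beta-\kappa)/\sigma^2$, and this gives eigenvalue $\kappa\eta c_\beta$.

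This eigenvalue is the bottom of the spectrum because $\mathrm e^{-c_\beta x}$ is strictly positive and lies in $L^2(\mu)$. Under positivity improvement, a positive $L^2$ eigenfunction must be the Perron--Frobenius ground state, since it cannot be orthogonal to $\psi_\beta>0$.

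Normalization uses the Gamma moment $\int \mathrm e^{-2cx}\,\de\mu=(\delta_\kappa/(\delta_\kappa+2c))^{\alpha}$. Since $\delta_\kappa+2c_\beta=2r_\beta/\sigma^2$, this yields the factor $(r_\beta/\kappa)^{\alpha/2}$. Similarly $\langle\psi,\mathbf 1\rangle=(r/\kappa)^{\alpha/2}(2\kappa/(\kappa+r))^\alpha$, and multiplying gives the endpoint factor $(2r/(\kappa+r))^\alpha\mathrm e^{-c x}$.

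Differentiating with $r_\beta'=\sigma^2/r_\beta$ gives $\lambda'=\kappa\eta/r_\beta$ and $-\lambda''=\kappa\eta\sigma^2/r_\beta^3>0$. Thus $\lambda'$ decreases from $\eta$ (at $\beta=0$) to $0$, so $\lambda'((0,\infty))=(0,\eta)$, which settles \textbf{(IV)}. Solving $\lambda'(\beta_\theta)=\theta$ gives $r=\kappa\eta/\theta$ and the stated $\beta_\theta$. For the rate, $I=\kappa\eta(r-\kappa)/\sigma^2-\beta\theta$. Substituting $\beta=(r^2-\kappa^2)/(2\sigma^2)$ and simplifying gives $\kappa^2(\eta-\theta)^2/(2\sigma^2\theta)$. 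Likewise $v_\theta=\sigma^2\theta^3/(\kappa^2\eta^2)$.

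\textbf{Limiting process.} The transformed generator is $L^{(\beta)}f=L_{\mathrm{CIR}}f+\sigma^2x(\log\psi)'f'$. Its drift is $\kappa(\eta-x)-\sigma^2 c x=\kappa\eta-r x=r(\theta-x)$, since $\kappa\eta=r\theta$. This is CIR with unchanged $\sigma$ and unchanged $\alpha=2r\theta/\sigma^2$, so its invariant law is Gamma with shape $\alpha$ and mean $\theta$.

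The main obstacle I expect is the boundary at $0$: justifying holomorphy of the form family, positivity improvement, and compactness with a degenerate diffusion coefficient. I would handle this through the Laguerre spectral representation and the Feller condition, which makes $0$ entrance/inaccessible and selects the conservative realization.
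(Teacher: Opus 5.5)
Your proposal is correct and follows the paper's own route. You verify conditions \textbf{(I)}--\textbf{(V)} for the CIR generator with $Q(x)=x$ using the Laguerre spectrum, the diamagnetic rigidity argument, the cited smoothing estimate and the explicit ground state $\mathrm e^{-c_\beta x}$, then invoke Theorem~\ref{prop:reversible-extension-principle} and read off the constants; your computations of the normalization, endpoint factor, $\lambda'$, $\lambda''$, $I$, $v_\theta$ and the transformed drift $r_{\beta_\theta}(\theta-x)$ all check out, and your orthogonality argument for why $\mathrm e^{-c_\beta x}$ is the ground state usefully fills in a step the paper leaves implicit.
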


\begin{proof}
The spectral and smoothing properties verified above give
conditions \textbf{(I)}-\textbf{(III)}, while
$\lambda'((0,\infty))=(0,\eta)$ gives condition \textbf{(IV)}.
Theorem~\ref{prop:reversible-extension-principle} therefore applies.
The displayed constants and the transformed drift follow from the
preceding calculations.
\end{proof}

\subsection{A birth-death process with immigration}
Let us now take as reference path measure a process that is not a diffusion. Let
$X=(X_t)_{t\geq 0}$ be the continuous-time Markov chain on
$\Z_{\geq 0}$ with transition rates
\[
 q(n,n+1)=a+bn,
 \qquad
 q(n,n-1)=cn,
 \qquad
 a,b,c>0,\quad c>b
\]
and generator $L$.
It is reversible with respect to the negative-binomial probability
measure
\[
 \pi(n)
 =
 \left(1-\frac bc\right)^{a/b}
 \frac{\Gamma(n+a/b)}{\Gamma(a/b)n!}
 \left(\frac bc\right)^n,
 \qquad n\in\Z_{\geq 0}.
\]
As before, $A_T=\int_0^T \de s \, X_s$.
For $\beta>0$, let $r_\beta\in(0,1)$ be the smaller solution of
\[
 br_\beta+\frac{c}{r_\beta}=b+c+\beta.
\]
Direct substitution shows that the normalized ground state of
$H_\beta=-L+\beta n$ on $\ell^2(\pi)$
is of the form $\psi_\beta(n)=C_\beta r_\beta^n$, and that the
corresponding ground-state energy satisfies
\[
 \lambda(\beta)=a(1-r_\beta),
 \qquad
 \lambda'(\beta)=\frac{a r_\beta^2}{c-br_\beta^2}.
\]

Let us briefly explain why the preceding argument applies. The
Meixner-polynomial spectral decomposition of $-L$ has eigenvalues $k(c-b)$, $k\in\Z_{\geq 0}$; see for example \cite[Section~2]{AscioneLeonenkoPirozzi}. Therefore $H_\beta$ has compact
resolvent. Moreover, the graph Dirichlet form satisfies
\[
 \frac12 \sum_{n \ge 0}\sum_{m\neq n}\pi(n)q(n,m)
 \bigl(|f(m)|-|f(n)|\bigr)^2
 \leq
 \frac12\sum_{n \ge 0}\sum_{m\neq n}\pi(n)q(n,m)|f(m)-f(n)|^2.
\]
Equality forces the phase of $f$ to be constant on the connected
graph $\Z_{\geq 0}$. Consequently, an eigenfunction of
$H_{\beta-iu}$ whose eigenvalue has real part $\lambda(\beta)$ would
force the observable $n\mapsto n$ to be constant. Thus every
nonzero Fourier mode has a strict spectral gap. Compactness makes
this gap uniform when $u$ ranges over a compact annulus, and the
local limit and untilting arguments remain unchanged. Notice in
particular that, despite the discrete state space, $A_T$ is
nonlattice because the holding times are continuously distributed.

For $0<\theta<a/(c-b)$, put
\[
 r_\theta=\sqrt{\frac{c\theta}{a+b\theta}},
 \qquad
 \beta_\theta
 =
 br_\theta+\frac{c}{r_\theta}-b-c,
 \qquad
 v_\theta = -\lambda''(\beta_\theta)
 =
 \frac{2ac r_\theta^3}{(c-br_\theta^2)^3}.
\]
Then $\lambda'(\beta_\theta)=\theta$,
$v_\theta=-\lambda''(\beta_\theta)>0$, and the corresponding rate
function is
\[
 I_{\rm BDI}(\theta)
 =
 \lambda(\beta_\theta)-\beta_\theta\theta
 =
 \left(\sqrt{a+b\theta}-\sqrt{c\theta}\right)^2.
\]

\begin{corollary}
\label{cor:bdi-extension}
Consider the birth-death process with immigration introduced above and let
    $A_T=\int_0^T \de s \, X_s$.
For $0<\theta<a/(c-b)$, define $r_\theta=\sqrt{\frac{c\theta}{a+b\theta}}$ and $\beta_\theta
      =br_\theta+\frac{c}{r_\theta}-b-c$.
Moreover, let
\[
    v_\theta
      =\frac{2ac r_\theta^3}{(c-br_\theta^2)^3},
    \qquad
    I_{\mathrm{BDI}}(\theta)
      =\left(
          \sqrt{a+b\theta}-\sqrt{c\theta}
       \right)^2.
\]
Then, for every fixed $n\in\mathbb Z_{\geq0}$ and $z\in\mathbb R$,
\[
    \bbP_n(A_T\leq\theta T+z)
    \sim
    \frac{
      \psi_{\beta_\theta}(n)
      \langle\psi_{\beta_\theta},\mathbf 1\rangle_{\ell^2(\pi)}
    }{
      \beta_\theta\sqrt{2\pi v_\theta T}
    }
    \exp\!\left(
       -I_{\mathrm{BDI}}(\theta)T+\beta_\theta z
    \right).
\]
On every fixed time interval, the conditioned process converges in total
variation to the birth-death process with rates
\[
    q^{(\theta)}(n,n+1)=r_\theta(a+bn),
    \qquad
    q^{(\theta)}(n,n-1)=\frac{c}{r_\theta}n.
\]
Its invariant law is negative binomial with shape parameter $a/b$ and
mean $\theta$.
\end{corollary}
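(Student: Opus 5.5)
The plan is to deduce Corollary~\ref{cor:bdi-extension} from Theorem~\ref{prop:reversible-extension-principle} by checking conditions \textbf{(I)}--\textbf{(V)} for the birth-death chain with $Q(n)=n$ on $\ell^2(\pi)$. After that, the explicit constants and the transformed rates are identified by direct computation.

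\textbf{Structural conditions (I)--(V).} The Meixner decomposition gives $-L$ eigenvalues $k(c-b)$, so $-L$ has compact resolvent. The form $\mathfrak h_z[f]=\mathcal E(f,f)+z\sum_n\pi(n)n|f(n)|^2$ on $\mathcal D=\{f:\mathcal E(f,f)+\sum\pi(n) n|f(n)|^2<\infty\}$ is a holomorphic family of type (a) on $\Re z>0$. Since $\mathsf H_\beta\ge -L$ in form sense, compact resolvent transfers to $\mathsf H_\beta$. Irreducibility of the nearest-neighbour chain makes $\mathrm e^{-t\mathsf H_\beta}$ positivity improving (all matrix entries of the Feynman-Kac semigroup are positive). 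Continuity of the ground state is automatic on a discrete space. This gives \textbf{(I)}.

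Condition \textbf{(II)} is the argument already sketched before the corollary. The equality case of $\mathcal E(|f|,|f|)\le\mathcal E(f,f)$ forces $\arg f(n+1)=\arg f(n)$ for every edge, hence a constant phase. The proof of Lemma~\ref{lemma:strict_spectral_gap} then runs verbatim, with uniformity on annuli coming from compactness of $\mathcal D\hookrightarrow\ell^2(\pi)$.

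For \textbf{(III)}, point evaluation obeys $|f(n)|\le\pi(n)^{-1/2}\|f\|_{\ell^2(\pi)}$. Hence $|P_\tau f(n)|\le\pi(n)^{-1/2}\|f\|$, uniformly on finite sets, and $\mathbf 1\in\ell^2(\pi)$. Condition \textbf{(V)} holds by the eigenvalue identity as remarked earlier, since $\psi_\beta(n)=C_\beta r_\beta^n\in\ell^2(\pi)$ when $r_\beta<1$.

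\textbf{Explicit spectral data.} Substitute $\psi(n)=r^n$ into $-L\psi+\beta n\psi$. This gives
\[
-(a+bn)(r-1)r^n-cn(r^{-1}-1)r^n+\beta n r^n.
\]
The $n$-coefficient vanishes iff $br+c/r=b+c+\beta$, leaving eigenvalue $a(1-r)$. Choosing the smaller root puts $r_\beta\in(0,1)$, and the eigenfunction is positive, so it is the ground state by Perron–Frobenius uniqueness.

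Differentiating the quadratic implicitly gives $r'_\beta=r^2/(br^2-c)$. Hence $\lambda'=ar^2/(c-br^2)$, and a second differentiation yields
\[
-\lambda''=\frac{2acr^3}{(c-br^2)^3}>0 .
\]
The map $\beta\mapsto r_\beta$ is decreasing from $1$ to $0$, so $\lambda'((0,\infty))=(0,a/(c-b))$, which gives \textbf{(IV)}. Solving $ar^2=\theta(c-br^2)$ gives $r_\theta$ as stated. Then $I=a(1-r)-\beta\theta$ simplifies, using $\theta=ar^2/(c-br^2)$, to $(\sqrt{a+b\theta}-\sqrt{c\theta})^2$. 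This is a routine but error-prone algebraic check, and it is the step I expect to need the most care.

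\textbf{Limit process.} The ground-state transform has rates $q(n,m)\psi(m)/\psi(n)$, that is, $r_\theta(a+bn)$ upward and $c n/r_\theta$ downward. Its reversible measure is proportional to $\pi(n)r^{2n}$, which is negative binomial with shape $a/b$ and ratio $br^2/c$. Its mean is $(a/b)\cdot\frac{br^2/c}{1-br^2/c}=\lambda'(\beta_\theta)=\theta$.

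Theorem~\ref{prop:reversible-extension-principle} then yields both the tail asymptotic and the total-variation limit.
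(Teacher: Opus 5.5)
Your proposal is correct and follows the paper's route. Like the paper, you verify conditions \textbf{(I)}--\textbf{(V)}: compact resolvent from the Meixner spectrum, the strict gap for nonzero Fourier modes from the equality case of the graph Dirichlet-form inequality, bounded point evaluations on $\ell^2(\pi)$, and the range $\lambda'((0,\infty))=(0,a/(c-b))$ with $-\lambda''>0$. You then invoke Theorem~\ref{prop:reversible-extension-principle} and read off the transformed rates and the negative-binomial invariant law from $\psi_{\beta_\theta}(n)\propto r_\theta^n$. Your explicit computations of $r_\beta'$, $\lambda''$, $I_{\mathrm{BDI}}(\theta)=(\sqrt{a+b\theta}-\sqrt{c\theta})^2$ and the invariant mean all check out, and they fill in details the paper leaves to the reader.
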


\begin{proof}
The preceding discussion verifies conditions
\textbf{(I)}-\textbf{(IV)}, so
Theorem~\ref{prop:reversible-extension-principle} applies.
The formulas for the transformed rates follow from
$\psi_{\beta_\theta}(n)\propto r_\theta^n$.  The resulting
negative-binomial invariant distribution has the same shape parameter as
$\pi$ and mean $\theta$.
\end{proof}

\subsection*{Acknowledgements}
Thanks to Frank Aurzada for making us aware of their conjecture. Thanks also to Volker Betz for useful discussions. The author also gratefully acknowledges support from DFG grant No 535662048.

\begin{appendix}

\section{Sectorial forms, complex Feynman-Kac semigroups, and Riesz projections}
\label{sec:sectorial-background}

This Appendix collects the functional-analytic facts used in the complex
Fourier argument.  We include the details because the complex operator
\[
        H_{\beta-iu}=-\frac12\Delta+(\beta-iu)Q
\]
is not self-adjoint when $u\neq0$, and hence the usual self-adjoint spectral
theorem is not directly available.  The appropriate replacement consists of
closed sectorial forms, holomorphic semigroups, and Riesz projections.  We
also give a proof of the complex Feynman-Kac formula under the assumptions
used in the paper.
Throughout this Appendix, the scalar product on a complex Hilbert space is
linear in its first variable.  Thus, on $L^2(\mathbb R^d)$,
\[
        \langle f,g\rangle=\int_{\mathbb R^d}\de x \,f(x)\overline{g(x)}.
\]
\subsection{From energy forms to operators}
\label{subsec:forms-to-operators}

Let $\mathcal H$ be a complex Hilbert space.  A sesquilinear form
$\mathfrak a$ is a map
\[
  \mathfrak a:\mathcal D(\mathfrak a)\times\mathcal D(\mathfrak a)
  \longrightarrow\mathbb C
\]
which is linear in the first variable and conjugate-linear in the second.  We
write $\mathfrak a[f]=\mathfrak a[f,f]$.  The form is densely defined if
$\mathcal D(\mathfrak a)$ is dense in $\mathcal H$.

For $0\leq\alpha<\pi/2$, let
$\overline{\Sigma_\alpha}
  =\{re^{i\vartheta}:r\geq0,\ |\vartheta|\leq\alpha\}$.
\begin{definition}
\label{def:sectorial-form}
A densely defined form $\mathfrak a$ is sectorial with vertex
$\omega\in\mathbb R$ and semi-angle $\alpha<\pi/2$ if
\[
  \mathfrak a[f]-\omega\|f\|^2\in\overline{\Sigma_\alpha},
  \qquad f\in\mathcal D(\mathfrak a).
\]
Equivalently, after a real shift, its imaginary part is controlled by its real
part: there is $C<\infty$ such that
\[
  |\Im \mathfrak a[f]|
  \leq C\bigl(\Re\mathfrak a[f]-\omega\|f\|^2\bigr),
  \qquad f\in\mathcal D(\mathfrak a).
\]
If $\omega=0$ and $\Re \mathfrak a[f]\geq0$, the form is called
accretive.
\end{definition}
 A nonnegative symmetric form is the
special case in which the sector has angle zero and is simply the positive
real axis.
From now on, assume for simplicity that $\mathfrak a$ is accretive. Define the form norm by
\[
  \|f\|_{\mathfrak a}^2
  =\|f\|^2+\Re \mathfrak a[f].
\]
The form is closed if $\mathcal D(\mathfrak a)$ is complete in this norm.  For
a form with a nonzero vertex one first makes a sufficiently large real shift.
The operator associated with a densely defined closed sectorial form is
defined weakly.  Namely, $f\in\mathcal D(A)$ precisely when
$f\in\mathcal D(\mathfrak a)$ and there exists $h\in\mathcal H$ such that
\begin{equation}
  \mathfrak a[f,g]=\langle h,g\rangle,
  \qquad g\in\mathcal D(\mathfrak a).
  \label{eq:associated-operator-definition}
\end{equation}
One then sets $Af=h$.  The vector $h$ is unique because
$\mathcal D(\mathfrak a)$ is dense.
The first representation theorem for sectorial forms states that the
operator $A$ constructed in this way is $m$-sectorial; see
\cite[Chapter~VI, Section~2]{Kato} or
\cite[Chapter~1]{OuhabazHeat}.  Here the letter $m$ means maximal.  In the
accretive case, maximality can be expressed by
\[
       \operatorname{Ran}(A+\lambda)=\mathcal H,
       \qquad \lambda>0.
\]
In particular, $-A$ generates a strongly continuous contraction semigroup by the Lumer-Phillips Theorem. We denote the corresponding semigroup by $(\mathrm e^{-tA})_{t\geq0}$.  Since $A$ is sectorial with angle smaller
than $\pi/2$, this semigroup is holomorphic in time in a complex sector \cite[Chapter IX, Theorem 1.24]{Kato}.
\subsection{The complex Schr\"odinger family}
\label{subsec:complex-schrodinger-family}

Let $Q:\mathbb R^d\to[0,\infty)$ be continuous and satisfy
$Q(x)\to\infty$ as $|x|\to\infty$.  On $\mathcal H=L^2(\mathbb R^d)$ recall that
\[
  \mathcal D
  =H^1(\mathbb R^d)\cap L^2(\mathbb R^d,Q(x)\,\de x)
\]
and, for $z\in\mathbb C$ with $\Re z>0$,
\begin{equation}
  \mathfrak h_z[f,g]
  =\frac12\int_{\mathbb R^d} \de x \,\nabla f(x)\cdot\overline{\nabla g(x)}
     +z\int_{\mathbb R^d}\de x \, Q(x)f(x)\overline{g(x)}.
  \nonumber
\end{equation}

\begin{proposition}
\label{prop:closed-sectorial-app}
For every $z$ with $\Re z>0$, the form $\mathfrak h_z$ is
densely defined, closed, and sectorial on the common domain $\mathcal D$.
Its associated operator, denoted by $H_z$, is the weak realization of
\[
       -\frac12\Delta+zQ.
\]
More precisely,
\[
 \mathcal D(H_z)
 =\left\{f\in\mathcal D:
 -\frac12\Delta f+zQf\in L^2(\mathbb R^d)
 \text{ in the sense of distributions}\right\}.
\]
Furthermore,  $H_z $ has compact resolvent and $H_z^*=H_{\overline z}$.
\end{proposition}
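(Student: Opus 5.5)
The proof will verify the four claims in turn: density, sectoriality, and closedness of $\mathfrak h_z$ on $\mathcal D$; identification of the associated operator; compactness of the resolvent; and the adjoint relation.

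\emph{Density and sectoriality.} $C_c^\infty(\mathbb R^d)\subset\mathcal D$, because $Q$ is continuous and hence locally bounded. So $\mathcal D$ is dense in $L^2(\mathbb R^d)$. Writing $z=\beta-iu$ with $\beta>0$, we have
\[
\Re\mathfrak h_z[f]=\tfrac12\|\nabla f\|_2^2+\beta\int \de x\,Q|f|^2\ge0,
\qquad
|\Im\mathfrak h_z[f]|=|u|\int \de x\,Q|f|^2\le\tfrac{|u|}{\beta}\Re\mathfrak h_z[f].
\]
This is precisely the accretive sectorial condition of Definition~\ref{def:sectorial-form}, with $C=|u|/\beta$.

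\emph{Closedness.} The form norm $\|f\|^2+\Re\mathfrak h_z[f]$ is equivalent, with constants depending on $\beta$, to
\[
\|f\|_2^2+\|\nabla f\|_2^2+\|Q^{1/2}f\|_2^2 .
\]
This is the norm of the intersection $H^1\cap L^2(Q\,\de x)$ of two Hilbert spaces continuously embedded in $L^2$. A Cauchy sequence therefore converges in each space, and the two limits coincide as $L^2$ limits. Hence $\mathcal D$ is complete and the form is closed.

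\emph{The associated operator.} The first representation theorem gives an $m$-sectorial operator $H_z$ defined by \eqref{eq:associated-operator-definition}. Testing against $g\in C_c^\infty$ shows that $H_zf=-\frac12\Delta f+zQf$ in distributions, so $\mathcal D(H_z)$ is contained in the displayed set. For the reverse inclusion, take $f\in\mathcal D$ with $h:=-\frac12\Delta f+zQf\in L^2$. Then $\mathfrak h_z[f,g]=\langle h,g\rangle$ holds for $g\in C_c^\infty$, and we must extend this identity to all $g\in\mathcal D$. This requires $C_c^\infty$ to be a core for the form norm, which is the step I expect to be the only genuinely technical one. My approach is to truncate with cutoffs $\chi(\cdot/n)g$, which converge in form norm by dominated convergence since $Q|g|^2\in L^1$, and then mollify the compactly supported functions, using that $Q$ is bounded on compact sets so that the $L^2(Q)$ part converges as well. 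Both sides of the identity are continuous in $g$ for the form norm: the left side because $f\in\mathcal D$, the right side because $h\in L^2$. The identity therefore extends to $g\in\mathcal D$, which gives $f\in\mathcal D(H_z)$.

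\emph{Compact resolvent.} It suffices to show that the embedding $\mathcal D\hookrightarrow L^2$ is compact. The resolvent maps $L^2$ boundedly into $\mathcal D$ with its form norm, by Lax--Milgram applied to the coercive form $\mathfrak h_z+1$. Take a bounded set in $\mathcal D$. On each ball $B_R$, Rellich--Kondrachov gives precompactness in $L^2(B_R)$. The tails are uniformly small: $\int_{|x|>R}|f|^2\le(\inf_{|x|>R}Q)^{-1}\|Q^{1/2}f\|_2^2\to0$ as $R\to\infty$, because $Q\to\infty$. A diagonal argument then yields precompactness in $L^2(\mathbb R^d)$.

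\emph{Adjoint.} The adjoint of the operator associated with a closed sectorial form $\mathfrak a$ is the operator associated with the adjoint form $\mathfrak a^*[f,g]=\overline{\mathfrak a[g,f]}$; see \cite[Chapter~VI, Theorem~2.5]{Kato}. Since $Q$ is real, $\overline{\mathfrak h_z[g,f]}=\mathfrak h_{\overline z}[f,g]$ on the same domain $\mathcal D$, and hence $H_z^*=H_{\overline z}$.
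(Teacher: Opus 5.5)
Your proposal is correct and follows the paper's proof step for step: the same sector bound $|\Im\mathfrak h_z[f]|\le(|u|/\beta)\,\Re\mathfrak h_z[f]$, closedness from completeness of $H^1$ and the weighted space, Rellich on balls plus the $Q$-tail bound for compactness, and the adjoint-form identity $\overline{\mathfrak h_z[g,f]}=\mathfrak h_{\overline z}[f,g]$. You also usefully spell out the form-core argument that the paper compresses into ``weak integration by parts.'' One small wording fix: $L^2(Q\,\mathrm dx)$ is not embedded in $L^2$ when $Q$ has zeros, so phrase the closedness step via $L^2((1+Q)\,\mathrm dx)$ or an a.e.\ convergent subsequence, rather than via ``the two limits coincide as $L^2$ limits.''
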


\begin{proof}
Since $Q$ is locally bounded,
$C_c^\infty(\mathbb R^d)\subset\mathcal D$, so the form is densely defined.
Write $z=\beta-iu$, where $\beta>0$.  Then
\begin{align}
    \nonumber
 \Re \mathfrak h_{\beta-iu}[f]
 &=\frac12\|\nabla f\|_2^2+\beta\int_{\mathbb R^d} \de x \, Q(x) |f(x)|^2,
 \\
 |\Im\mathfrak h_{\beta-iu}[f]|
 &=|u|\int_{\mathbb R^d} \de x \, Q(x)|f(x)|^2
 \leq\frac{|u|}{\beta}
       \Re \mathfrak h_{\beta-iu}[f].
 \nonumber
\end{align}
Thus the numerical range of the form lies in a sector of semi-angle at most
$\arctan(|u|/\beta)<\pi/2$.
The form norm is equivalent to
\[
  \|f\|_2^2+\|\nabla f\|_2^2+\int_{\mathbb R^d} \de x \, Q(x)|f(x)|^2.
\]
Completeness of $H^1(\mathbb R^d)$ and of the weighted space
$L^2(Q(x)\,\de x)$ therefore gives closedness. The description of the associated
operator follows from \eqref{eq:associated-operator-definition} by weak
integration by parts.
Since  $Q(x)\to\infty $ as  $|x|\to\infty $, Rellich compactness on balls together with control of the $L^2$-tail by the $Q$-term shows that the common form domain  $\mathcal D $ embeds compactly into  $L^2(\mathbb R^d) $. Consequently,  $H_z $ has compact resolvent for every  $\Re z>0 $.
Finally,
\[
  \overline{\mathfrak h_z[g,f]}=\mathfrak h_{\overline z}[f,g],
\]
which, together with the representation theorem, gives
$H_z^*=H_{\overline z}$.
\end{proof}

For real $z=\beta>0$, the form is symmetric and $H_\beta$ is self-adjoint.
For $u\neq0$, the operator $H_{\beta-iu}$ is generally not self-adjoint, but
it is still $m$-sectorial and therefore generates, as discussed in the previous subsection, the holomorphic semigroup
\[
      S_u(t)=\mathrm e^{-tH_{\beta-iu}}.
\]

\begin{lemma}
\label{lem:spectral-to-semigroup}
Let $K\subset\R$ be compact and let $a\in\R$.  Suppose that
\begin{equation}
 \inf_{u\in K}\inf_{\zeta\in\spec(H_{\beta-iu})}\Re\zeta>a
\label{eq:abstract-uniform-spectral-gap}
\end{equation}
and that
\begin{equation}
 \Re\mathfrak h_{\beta-iu}[f]
 \geq a\norm f_2^2,
 \qquad f\in\cD, u\in K.
 \label{eq:numerical-lower-bound}
\end{equation}
Then there are $C,\eta>0$ such that
\[
 \sup_{u\in K}\norm{S_u(t)}_{2\to2}
 \leq C \mathrm e^{-(a+\eta)t},
 \qquad t\geq1.
\]
The same conclusion holds for a norm-continuous family of complementary
spectral semigroups $S_u(t)(I-P_u)$, provided that the corresponding
complementary spectra satisfy \eqref{eq:abstract-uniform-spectral-gap} and
$\sup_{u\in K}\norm{P_u}_{2 \to 2}<\infty$.
\end{lemma}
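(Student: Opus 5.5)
We must prove Lemma \ref{lem:spectral-to-semigroup}: a uniform spectral bound over the compact set $K$, together with the numerical-range lower bound, gives uniform exponential decay of $S_u(t)$ at rate $a+\eta$.

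\textbf{Approach.} Write the semigroup as a contour integral of the resolvent and deform the contour into the half-plane $\{\Re\zeta\ge a+2\eta\}$. Set $\eta=\frac13\bigl(\inf_{u\in K}\inf_{\zeta\in\spec(H_{\beta-iu})}\Re\zeta-a\bigr)>0$. Since $\Re\mathfrak h_{\beta-iu}\ge a$ and $|\Im\mathfrak h_{\beta-iu}[f]|\le\frac{|u|}{\beta}\Re\mathfrak h_{\beta-iu}[f]$ with $|u|\le U:=\max_K|u|$, the numerical range $W_u$ lies in the fixed sector
\[
\Sigma=\{\zeta:\ \Re\zeta\ge a,\ |\Im\zeta|\le (U/\beta)\Re\zeta\}
\]
for all $u\in K$. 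The resolvent bound $\|(H_{\beta-iu}-\zeta)^{-1}\|\le \operatorname{dist}(\zeta,W_u)^{-1}$ holds for $\zeta\notin\overline{\Sigma}$. So outside a slightly enlarged sector the resolvent is bounded uniformly in $u$.

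\textbf{Contour.} Take $\Gamma$ to be the vertical segment $\{\Re\zeta=a+2\eta,\ |\Im\zeta|\le M\}$ joined to two rays $\zeta=a+2\eta\pm iM+re^{\pm i\varphi}$, $r\ge0$. Choose $\varphi\in(\arctan(U/\beta),\pi/2)$ and $M$ large, so that the rays stay outside $\Sigma$ at distance $\gtrsim r+1$. On the rays the uniform bound above applies. The remaining piece is the compact segment. It avoids $\spec(H_{\beta-iu})$ for every $u\in K$, since spectral points have real part at least $a+3\eta$.

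The key step is a uniform bound for $\|(H_{\beta-iu}-\zeta)^{-1}\|$ on the set $\{(u,\zeta)\in K\times\text{segment}\}$. I would obtain it from continuity of $(u,\zeta)\mapsto(H_{\beta-iu}-\zeta)^{-1}$ in norm, together with compactness. Norm continuity in $u$ follows from the resolvent identity in form sense:
\[
(H_{\beta-iu}-\zeta)^{-1}-(H_{\beta-iv}-\zeta)^{-1}=i(u-v)(H_{\beta-iu}-\zeta)^{-1}Q(H_{\beta-iv}-\zeta)^{-1}.
\]
The operator $Q^{1/2}(H_{\beta-iv}-\zeta)^{-1}$ is bounded on $L^2$, locally uniformly, because $\beta\|Q^{1/2}f\|^2\le\Re\mathfrak h[f]$. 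This is the holomorphic type-(B) continuity already invoked in Proposition~\ref{prop:analytic-band}.

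\textbf{Semigroup bound.} Now write
\[
S_u(t)=\frac1{2\pi i}\int_\Gamma \mathrm d\zeta\, e^{-t\zeta}(\zeta-H_{\beta-iu})^{-1},
\]
which is valid for holomorphic semigroups generated by $m$-sectorial operators (Appendix subsection on forms). Shifting the standard sector contour to $\Gamma$ is legitimate because no spectrum lies to the left of $\Gamma$ within the sector. For $t\ge1$:
\begin{itemize}
\item the segment contributes at most $C\,e^{-(a+2\eta)t}$;
\item each ray contributes at most $\int_0^\infty e^{-t(a+2\eta+r\cos\varphi)}(c(r+1))^{-1}\,\mathrm dr\le C e^{-(a+2\eta)t}$, the integral converging since $t\ge1$.
\end{itemize}
This gives $\sup_{u\in K}\|S_u(t)\|\le Ce^{-(a+\eta)t}$.

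\textbf{Complementary version.} For $S_u(t)(I-P_u)$, apply the same integral to the restricted resolvent $(\zeta-H_{\beta-iu})^{-1}(I-P_u)$. This is analytic to the left of $\Gamma$ except at the removed eigenvalue, and is bounded by $\sup_u\|P_u\|$ times the previous bounds. On the segment, continuity in $(u,\zeta)$ follows from norm continuity of $P_u$.

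The main obstacle is the uniform resolvent bound on the compact segment, in particular establishing joint norm continuity in $u$. Note that the numerical-range estimate alone does not control the resolvent inside $\Sigma$.
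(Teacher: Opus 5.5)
Your proof is correct, but it takes a genuinely different route from the paper's.

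\textbf{The paper's route.} The paper never works with the resolvent.
\begin{enumerate}
\item It gets norm continuity of $u\mapsto S_u(t)$ directly from the Feynman--Kac bound $\|S_u(t)-S_v(t)\|_{2\to2}\le|u-v|/(\mathrm e\beta)$.
\item It turns the spectral hypothesis into $r(S_u(1))\le\mathrm e^{-(a+2\eta_0)}$ via the spectral mapping theorem for analytic semigroups.
\item It uses the spectral-radius formula to find a $u$-dependent time $n_u$ with $\|S_u(n_u)\|_{2\to2}\le\mathrm e^{-(a+\eta_0)n_u}$, and spreads this to a neighborhood of $u$ by norm continuity.
\item It takes a finite subcover of $K$ and glues the different block lengths into a common $N$, using the contraction bound $\|S_v(s)\|_{2\to2}\le\mathrm e^{-as}$ supplied by \eqref{eq:numerical-lower-bound}.
\end{enumerate}

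\textbf{Your route and what it buys.} Your Dunford--Cauchy argument places the compactness on the pairs $(u,\zeta)$ with $u\in K$ and $\zeta$ on the vertical segment, and handles infinity through one fixed sector.
\begin{itemize}
\item It gives a quantitative rate: $a+2\eta$, and in fact any rate strictly below the uniform spectral bound. The paper's gluing degrades the gain to $\eta_0 n_*/(2N)$, with no control of $n_*/N$. The paper only needs some positive gain, so this is a matter of cleanliness rather than necessity.
\item It uses \eqref{eq:numerical-lower-bound} only to locate the numerical range. Uniform sectoriality $|\Im\mathfrak h_{\beta-iu}[f]|\le (U/\beta)\Re\mathfrak h_{\beta-iu}[f]$, together with $\Re\mathfrak h_{\beta-iu}\ge 0$, would already suffice. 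In the paper that hypothesis is indispensable for the gluing step.
\end{itemize}

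\textbf{What it costs.} You need joint norm continuity of $(u,\zeta)\mapsto(H_{\beta-iu}-\zeta)^{-1}$, which is a heavier input than the semigroup continuity the paper reads off from Feynman--Kac. As written, your resolvent-identity sketch is slightly circular: bounding $Q^{1/2}(H_{\beta-iu}-\zeta)^{-1}$ presupposes a bound on the very resolvent you want to control. Either run a Neumann series using boundedness of $Q^{1/2}(H_{\beta-iv}-\zeta)^{-1}Q^{1/2}$ for $\zeta\in\rho(H_{\beta-iv})$, or simply cite Kato's joint holomorphy of the resolvent for type-(B) families.

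\textbf{Complementary version.} The reduced resolvent $(\zeta-H_{\beta-iu})^{-1}(I-P_u)$ is \emph{holomorphic} at the removed eigenvalue $\lambda_u$, because its principal part there is annihilated by $I-P_u$. It is not singular there, as your wording suggests. This matters because, for general compact $K$, $\lambda_u$ may lie on or near the segment $\Re\zeta=a+2\eta$. There you cannot bound the integrand by $\|I-P_u\|_{2\to2}$ times the full resolvent, and continuity of $P_u$ alone does not give the uniform bound. It does follow from the maximum modulus principle on small circles of radius less than $\eta$ enclosing $\lambda_u$. On these circles the full resolvent exists and is jointly continuous, and no complementary spectrum lies inside.
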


\begin{proof}
The forms constitute a holomorphic family of type (a), and their associated
operators $H_{\beta-iu}$ form a holomorphic family of type (B); see
\cite[Chapter~VII, Section~4, especially Theorem~4.2]{Kato}.  We shall only
need norm continuity of the semigroup in the real parameter $u$, which also
follows directly from the Feynman-Kac formula. Indeed, for $f\in L^2(\mathbb R^d)$,
\[
 \Vert (S_u(t)-S_v(t))f\Vert_2
 \leq \frac{|u-v|}{\mathrm e\beta}\Vert P_t|f|\Vert_2,
 \qquad
 \norm{S_u(t)-S_v(t)}_{2\to2}
 \leq\frac{|u-v|}{\mathrm e\beta}.
\]
The semigroups are analytic, and analyticity together with compact resolvent
makes them immediately compact.  The spectral mapping theorem for 
norm-continuous, in particular analytic or immediately compact, semigroups
gives
\begin{equation}
 \spec(S_u(t))\setminus\{0\}
 =\{\mathrm e^{-t\zeta}:\zeta\in\spec(H_{\beta-iu})\};
 \label{eq:spectral-mapping}
\end{equation}
see \cite[Chapter~IV, Sections~2-3]{EngelNagelSemigroups}.

By compactness of $K$, the left-hand side of
\eqref{eq:abstract-uniform-spectral-gap} is at least $a+2\eta_0$ for some
$\eta_0>0$.  Hence \eqref{eq:spectral-mapping} implies
\[
 r(S_u(1))\leq \mathrm e^{-(a+2\eta_0)},
 \qquad u\in K,
\]
where $r$ denotes the spectral radius.
For each fixed $u$, the spectral-radius formula gives an integer $n_u\geq1$
such that
\[
 \norm{S_u(n_u)}_{2 \to 2}\leq \mathrm e^{-(a+\eta_0)n_u}.
\]
Norm continuity yields a neighborhood $V_u$ on which the right-hand side can
be replaced by $\mathrm e^{-(a+\eta_0/2)n_u}$.  Select a finite subcover
$V_{u_1},\ldots,V_{u_m}$ and put
$N=\max_j n_{u_j}$ and $n_*=\min_j n_{u_j}$.  The form bound
\eqref{eq:numerical-lower-bound} implies
\begin{equation}
 \norm{S_v(s)}_{2 \to 2}\leq \mathrm e^{-as},
 \qquad s\geq0, v\in K,
 \nonumber
\end{equation}
by the m-accretive representation theorem
\cite[Chapter~VI, Section~2]{Kato}.  If $v\in V_{u_j}$, the semigroup property
therefore gives
\[
 \norm{S_v(N)}_{2 \to 2}
 \leq \mathrm e^{-aN-(\eta_0/2)n_{u_j}}
 \leq \mathrm e^{-(a+\eta_1)N},
 \qquad
 \eta_1=\frac{\eta_0n_*}{2N}>0.
\]
Writing $t=kN+r$, with $0\leq r<N$, and iterating this estimate proves the
claim.  For the complementary family, put
$M_0=\sup_{u\in K}\norm{I-P_u}_{2 \to 2}<\infty$.  Its spectral-radius bound allows
the integers $n_u$ above to be chosen so large that the strict contraction at
time $n_u$ absorbs the factor $M_0$ occurring in the estimate
$\norm{S_u(s)(I-P_u)}_{2 \to 2}\leq M_0e^{-as}$.  The same finite-cover and block
iteration then proves the stated complementary estimate.
\end{proof}

\subsection{The complex Feynman-Kac formula}
\label{subsec:complex-fk-app}

Let $(W_t)_{t\geq0}$ be Brownian motion in $\mathbb R^d$ with generator
$\Delta/2$ and put
\[
       A_T=\int_0^T \de s \, Q(W_s).
\]
Because a Brownian path has compact range on every bounded time interval and
$Q$ is continuous, $A_T<\infty$ almost surely for every $T<\infty$.
For the real-valued version of the formula and its standard Schr\"odinger
semigroup consequences, see, for example, \cite{SimonSemigroups}.
The proof below records the extra points needed when the coupling is complex.

\begin{theorem}
\label{thm:complex-fk-app}
Let $z\in\mathbb C$ satisfy $\Re z>0$.  For every
$f\in L^2(\mathbb R^d)$ and $t>0$,
\begin{equation}
  (\mathrm e^{-tH_z}f)(x)
  =\mathbb E^{\prob_x}\left[
       \mathrm e^{-zA_t}f(W_t)
    \right]
  \label{eq:complex-fk-app}
\end{equation}
for almost every $x$.  The right-hand side defines a canonical pointwise
version whenever it is finite.  In particular, if $z=\beta-iu$, then
\begin{equation}
  |\mathrm e^{-tH_{\beta-iu}}f|(x)
  \leq (\mathrm e^{-tH_\beta}|f|)(x)
  \leq (P_t|f|)(x),
  \label{eq:complex-fk-domination-app}
\end{equation}
where $P_t=\mathrm e^{t\Delta/2}$ is the free Brownian semigroup.
\end{theorem}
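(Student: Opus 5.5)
We prove \eqref{eq:complex-fk-app} by approximating $Q$ with bounded potentials, for which the complex Feynman-Kac formula follows from the Trotter product formula, and then passing to the limit on both sides. Domination comes for free from the pathwise bound $|\mathrm e^{-zA_t}|=\mathrm e^{-\beta A_t}$.

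\textbf{Step 1: bounded potentials.} Let $Q_n=\min(Q,n)$. The operator $zQ_n$ is a bounded multiplication operator, so $H^{(n)}_z=-\frac12\Delta+zQ_n$ is a bounded perturbation of the free generator. The Trotter product formula
\[
\mathrm e^{-tH^{(n)}_z}=\lim_{k\to\infty}\bigl(P_{t/k}\,\mathrm e^{-(t/k)zQ_n}\bigr)^k
\]
holds in the strong operator topology. The $k$-fold product has the explicit kernel representation
\[
\mathbb E^{\prob_x}\Bigl[\exp\Bigl(-z\tfrac tk\sum_{j=1}^{k} Q_n(W_{jt/k})\Bigr)f(W_t)\Bigr].
\]
The Riemann sums converge pathwise to $\int_0^t \de s\,Q_n(W_s)$, since $Q_n$ is continuous and paths are continuous. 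The integrand is bounded in modulus by $|f(W_t)|$, and $P_t|f|$ is finite. Dominated convergence therefore gives \eqref{eq:complex-fk-app} for $Q_n$, and in fact for every $x$. The domination $|\mathrm e^{-zA^{(n)}_t}|=\mathrm e^{-\beta A^{(n)}_t}\le 1$ gives \eqref{eq:complex-fk-domination-app} for $Q_n$.

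\textbf{Step 2: removing the truncation.} On the probabilistic side, $A^{(n)}_t\uparrow A_t<\infty$ almost surely. The integrand $\mathrm e^{-zA^{(n)}_t}f(W_t)$ converges pathwise and is dominated by $|f(W_t)|$. So the right-hand side converges for every $x$, and in $L^2$ by domination with $P_t|f|\in L^2$.

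On the operator side I need $\mathrm e^{-tH^{(n)}_z}\to\mathrm e^{-tH_z}$ strongly, and this is the main obstacle, since the forms are complex and not monotone. Two routes are available.

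\emph{Route A: Kato's monotone convergence for forms.} The real parts $\mathfrak h^{(n)}_\beta$ increase to $\mathfrak h_\beta$. The imaginary parts are controlled uniformly by the sector bound $|\Im\mathfrak h^{(n)}_z[f]|\le (|u|/\beta)\Re \mathfrak h^{(n)}_z[f]$. Sectorial monotone-convergence results then apply, for instance Kato VIII.3, or Ouhabaz's results on convergence of sectorial forms with uniform sector: strong resolvent convergence holds when the forms converge on a core and are uniformly sectorial.

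\emph{Route B: direct resolvent argument.} I would try this first. For $\mu>0$ and $g\in L^2$, set $f_n=(H^{(n)}_z+\mu)^{-1}g$. Accretivity gives $\Re\mathfrak h^{(n)}_z[f_n]+\mu\|f_n\|^2\le\|g\|\|f_n\|$. Hence $\|\nabla f_n\|_2$ and $\int \de x\,Q_n|f_n|^2$ are bounded. Since $Q_n\ge Q_m$ for $n\ge m$, Fatou/compactness (Rellich plus a uniform tail bound from $Q_m$ large at infinity) extracts a subsequence with $f_n\to f$ in $L^2$ and weakly in $H^1$, and gives $\int \de x\,Q|f|^2<\infty$, so $f\in\mathcal D$.

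Now test against $\varphi\in C_c^\infty$. On $\operatorname{supp}\varphi$ we have $Q_n=Q$ for large $n$. Passing to the limit therefore gives $\mathfrak h_z[f,\varphi]+\mu\langle f,\varphi\rangle=\langle g,\varphi\rangle$. $C_c^\infty$ is a form core for $\mathcal D$: cut off and mollify, using the continuity of $Q$. Density of the core then gives $f=(H_z+\mu)^{-1}g$. Uniqueness of the limit yields convergence of the full sequence.

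Strong resolvent convergence, together with the uniform bound $\|\mathrm e^{-tH^{(n)}_z}\|\le1$, gives strong semigroup convergence by the Trotter-Kato theorem. Identifying the limits of both sides gives \eqref{eq:complex-fk-app} almost everywhere, along an $L^2$-convergent subsequence.

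\textbf{Step 3: domination.} The first inequality in \eqref{eq:complex-fk-domination-app} follows from $|\mathbb E[\mathrm e^{-zA_t}f(W_t)]|\le\mathbb E[\mathrm e^{-\beta A_t}|f(W_t)|]$ together with the real case $u=0$ of the formula. The second follows from $\mathrm e^{-\beta A_t}\le1$. Finiteness of the right-hand side for every $x$, since $P_t|f|(x)\le(4\pi t)^{-d/4}\|f\|_2$, justifies calling it the canonical pointwise version.
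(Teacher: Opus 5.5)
Your proposal is correct. Steps~1 and~3 are the paper's argument: the product formula for the truncations $Q\wedge N$, the Markov property turning the $m$-fold product into a Riemann-sum expectation, dominated convergence, and the pathwise bound $|\mathrm e^{-(\beta-iu)A_t}|=\mathrm e^{-\beta A_t}$. Your Route~A is also how the paper removes the cut-off: it invokes the monotone convergence theorem for closed sectorial forms (Vogt--Voigt).

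As you state it, however, Route~A's criterion is too weak. Convergence on a core plus uniform sectoriality does not give resolvent convergence. For example, the constant sequence of Neumann forms on $H^1(0,1)$ agrees with the Dirichlet form on its core $C_c^\infty(0,1)$, yet does not converge to it in the resolvent sense. The theorem the paper uses needs the increments $\mathfrak h_{z,M}-\mathfrak h_{z,N}=z\int(Q_M-Q_N)|f|^2$ to lie in one fixed sector. This is stronger than increasing real parts plus uniform sectoriality of each form, and it is what the paper verifies.

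Your Route~B is a genuinely different and more elementary argument for the operator-side limit, and it is sound. The following steps all check out:
\begin{itemize}
\item the accretivity bound $\frac12\|\nabla f_n\|_2^2+\beta\int Q_n|f_n|^2+\mu\|f_n\|_2^2\le\|g\|_2^2/\mu$;
\item the uniform tail bound from $Q_m\le Q_n$ for $n\ge m$, together with $Q\to\infty$ and Rellich;
\item Fatou, placing the limit in $\mathcal D$;
\item identification of the limit by testing against $C_c^\infty$, where eventually $Q_n=Q$;
\item density of $C_c^\infty$ in $\mathcal D$ for the form norm, via cut-off and mollification;
\item uniqueness of the limit, and Trotter--Kato with the uniform contraction bound.
\end{itemize}

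The trade-off is as follows. The paper's route is a one-line citation and needs neither compactness nor a core argument. Yours is self-contained and avoids a less standard theorem on non-symmetric forms. The price is using local boundedness of $Q$ for the core and the confinement $Q\to\infty$ for compactness. Even the confinement is dispensable: weak limits together with the energy identity already force $\|f_n\|_2\to\|f\|_2$.

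Your observation that the probabilistic side also converges in $L^2$, dominated by $2P_t|f|$, makes the final identification of the two limits slightly more direct than in the paper.
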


\begin{proof}
We separate the proof into the bounded-potential case and the removal of the
boundedness assumption.  This makes clear that no positivity or
self-adjointness is being used for the imaginary part of the potential.
To start, 
let $Q_N=Q\wedge N$ and let $H_{z,N}$ be the operator associated with
\[
  \mathfrak h_{z,N}[f,g]
  =\frac12\int \de x \, \nabla f(x)\cdot\overline{\nabla g(x)}
   +z\int \de x \,  Q_N (x) f (x) \overline{g(x)},
  \qquad \mathcal D(\mathfrak h_{z,N})=H^1(\mathbb R^d).
\]
Since $Q_N$ is bounded, $zQ_N$ is a bounded multiplication operator and
\[
        H_{z,N}=-\frac12\Delta+zQ_N,
        \qquad \mathcal D(H_{z,N})=H^2(\mathbb R^d).
\]
Moreover, note that the Chernoff product formula (\cite[Theorem III.5.3]{EngelNagelSemigroups}) yields
\begin{equation}
 \mathrm e^{-tH_{z,N}}
 =\operatorname*{s-lim}_{m\to\infty}
 \left(P_{t/m}M_{\exp(-tzQ_N/m)}\right)^m
 \label{eq:trotter-bounded-complex-app}
\end{equation}
because
\[
 \frac{P_tM_{\mathrm e^{-tzQ_N}}f-f}{t}
 \longrightarrow \frac12\Delta f-zQ_Nf
 \quad\text{for }f\in C_c^\infty(\mathbb R^d).
\]
Here, we used $M_f$ to denote the multiplication operator
\[M_f :L^2(\bbR^d) \to L^2(\bbR^d), \quad g \mapsto fg.\]
Let $\delta=t/m$.  Iterating the Markov property shows that the operator in
the product on the right-hand side of
\eqref{eq:trotter-bounded-complex-app} equals
\begin{equation}
 \mathbb E^{\prob_x}\left[
  \exp\left(-z\delta\sum_{k=1}^m Q_N(W_{k\delta})\right)f(W_t)
 \right].
 \nonumber
\end{equation}
Since $s\mapsto Q_N(W_s)$ is almost surely continuous, its Riemann sums
converge to $\int_0^t \de s \, Q_N(W_s)$.  Moreover,
\[
 \left|
  \exp\left(-z\delta\sum_{k=1}^mQ_N(W_{k\delta})\right)
 \right|
 \leq1
\]
because $\Re z>0$ and $Q_N\geq0$.  Dominated convergence,
followed by the $L^2$-contractivity of $P_t$, therefore gives
\begin{equation}
 (\mathrm e^{-tH_{z,N}}f)(x)
 =\mathbb E^{\prob_x}\left[
   \mathrm e^{-zA_t^{(N)}}f(W_t)
  \right],
 \qquad
 A_t^{(N)}=\int_0^t \de s \, Q_N(W_s).
 \nonumber
\end{equation}
We now remove the cut-off on the external potential $Q$.
The forms $\mathfrak h_{z,N}$ are uniformly sectorial, because
\[
 |\Im\mathfrak h_{z,N}[f]|
 \leq\frac{|\Im z|}{\Re z}
       \Re \mathfrak h_{z,N}[f].
\]
Moreover, for $M\geq N$ the difference
\[
 \mathfrak h_{z,M}[f]-\mathfrak h_{z,N}[f]
 =z\int \de x \, (Q_M(x)-Q_N)(x)|f(x)|^2
\]
lies in the same fixed sector.  The monotone convergence theorem for closed
sectorial forms therefore applies; see
\cite{VogtVoigt}.  Its limiting form is
$\mathfrak h_z$, with domain
\[
 \left\{f\in H^1(\mathbb R^d):
       \sup_N\int \de x \, Q_N (x)|f(x)|^2<\infty\right\}
 =H^1(\mathbb R^d)\cap L^2(Q(x)\,\de x)=\mathcal D.
\]
Consequently, $\mathrm e^{-tH_{z,N}}f\longrightarrow \mathrm e^{-tH_z}f$ strongly in $L^2(\R^d)$
for every $t\geq0$ and $f\in L^2(\mathbb R^d)$.
Since $Q_N\uparrow Q$, it follows that $A_t^{(N)}\uparrow A_t<\infty$ almost surely.
Hence
\[
      \mathrm   e^{-zA_t^{(N)}}f(W_t)
       \longrightarrow \mathrm  e^{-zA_t}f(W_t)
       \quad\text{almost surely}.
\]
The absolute value is bounded by $|f(W_t)|$.  Thus, for almost every $x$,
dominated convergence gives
\[
 \mathbb E^{\prob_x}[\mathrm e^{-zA_t^{(N)}}f(W_t)]
 \longrightarrow
 \mathbb E^{\prob_x}[\mathrm e^{-zA_t}f(W_t)].
\]

Finally, $|\mathrm e^{-(\beta-iu)A_t}|= \mathrm  e^{-\beta A_t}$
gives the first inequality in
\eqref{eq:complex-fk-domination-app}; the second follows by discarding the
Feynman-Kac weight (recall $Q \ge 0$).
\end{proof}

The same argument gives two estimates used repeatedly in the paper.  First,
for $u,v\in\mathbb R$ and $f\in L^2(\mathbb R^d)$,
\begin{align}
    \nonumber
 |(S_u(t)-S_v(t))f|(x)
 &\leq |u-v|\,
 \mathbb E^{\prob_x}[A_t \mathrm e^{-\beta A_t}|f(W_t)|] 
 \leq \frac{|u-v|}{\mathrm e\beta}P_t|f|(x),
 \nonumber
\end{align}
because $a\mathrm e^{-\beta a}\leq(\mathrm e\beta)^{-1}$ for $a\geq0$.  Therefore $\|S_u(t)-S_v(t)\|_{2\to2}
  \leq\frac{|u-v|}{\mathrm e\beta}.$
Second, for fixed $t$ and $x$, the function
\[
      z\longmapsto Z_t(z,x)=\mathbb E^{\prob_x}[\mathrm e^{-zA_t}]
\]
is holomorphic on $\{\Re z>0\}$.  Indeed, on every compact
subset of that half-plane, differentiation under the expectation is justified
by $A_t^k \mathrm e^{-\delta A_t}\leq C_{k,\delta}$ for any $\delta >0$.

\subsection{Holomorphic form families and Riesz projections}
\label{subsec:holomorphic-riesz-app}

A family $(\mathfrak a_z)_{z\in U}$ of closed sectorial forms is called a
\textbf{holomorphic family of type~(a)} if the form domain is independent of $z$ and
\[
       z\longmapsto\mathfrak a_z[f,g]
\]
is holomorphic for every pair of form-domain vectors $f,g$.  The associated
operators \textbf{form a holomorphic family of type~(B)}. 
The defined family $z\mapsto\mathfrak h_z$ is of type~(a), since it has the common
domain $\mathcal D$ and depends affinely on $z$.  It follows that the resolvent
\[
       z\longmapsto(w-H_z)^{-1}
\]
is operator-norm holomorphic wherever it is defined; see
\cite[Chapter~VII, Section~4]{Kato}.
Let $\beta>0$ be real and let $\lambda=\lambda(\beta)$ be the simple ground
state eigenvalue of $H_\beta$.  Choose a positively oriented circle $\Gamma$ around $\lambda$ containing no other point of
$\operatorname{spec}(H_\beta)$.  For $z$ close to $\beta$, define
\begin{equation}
       P(z)=\frac1{2\pi i}\int_\Gamma \de w \, (w-H_z)^{-1},
       \nonumber
\end{equation}
the Riesz projection associated with the spectral points inside
$\Gamma$.  It is an operator-norm holomorphic projection.  Its rank is locally constant and equals one at $z=\beta$.  Hence, after decreasing the
neighborhood of $\beta$, there is exactly one eigenvalue $\lambda(z)$ inside $\Gamma$, counted with algebraic multiplicity; both $\lambda(z)$ and $P(z)$ are holomorphic.

\section{Spectral Analysis of Schrödinger operators and the ground state transform}
\begin{proposition}
\label{prop:ground-state-spectral-theory}
Let $Q\in\mathcal Q$ and $\beta>0$. Then the following statements hold.
\begin{enumerate}
    \item[(a)] $H_\beta$ is self-adjoint, bounded below, and has compact
    resolvent.
    \item[(b)] Its lowest eigenvalue $\lambda(\beta)$ is simple, and the
    associated normalized eigenfunction $\phi_\beta$ can be chosen strictly
    positive.
    \item[(c)] There exists $a=a(\beta,Q)>0$ such that
    \[
        \exp\bigl(a\langle x\rangle^{1+p/2}\bigr)\phi_\beta
        \in H^1(\mathbb R^d),
        \qquad \langle x\rangle=(1+|x|^2)^{1/2},
    \]
    where $p$ is as in assumption \eqref{bp:ass_2}. In particular,
    $\phi_\beta\in L^1(\mathbb R^d)\cap L^2(\mathbb R^d)$.
    \item[(d)] The map $\beta\mapsto\lambda(\beta)$ and the positive,
    $L^2(\mathbb R^d)$-normalized branch $\beta\mapsto\phi_\beta$ are real analytic on
    $(0,\infty)$.
\end{enumerate}
\end{proposition}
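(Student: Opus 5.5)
For part (a) I would take $H_\beta$ to be the operator associated with the closed, nonnegative, symmetric form $\mathfrak h_\beta$ on $\mathcal D$. Proposition~\ref{prop:closed-sectorial-app} with $z=\beta$ real then gives self-adjointness, semiboundedness ($\mathfrak h_\beta\ge0$) and compact resolvent. The compact resolvent comes from the compact embedding $\mathcal D\hookrightarrow L^2$, which uses $Q\to\infty$: Rellich compactness on balls, together with the tail bound $\int_{|x|>r}|f|^2\le (\inf_{|x|>r}Q)^{-1}\int Q|f|^2$. So the spectrum is discrete and $\lambda(\beta)$ is attained by a minimizer of the Rayleigh quotient.

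For part (b) I would argue by positivity. The Feynman--Kac representation (Theorem~\ref{thm:complex-fk-app} with $u=0$) writes $\mathrm e^{-tH_\beta}$ as an integral operator with kernel $k_t(x,y)$. Comparing with the Gaussian heat kernel on bounded sets, using that $Q$ is locally bounded, gives $k_t(x,y)>0$, so the semigroup is positivity improving. The Perron--Frobenius theorem for positivity-improving semigroups (Reed--Simon XIII.44) then yields that $\lambda(\beta)$ is simple with an eigenfunction that is strictly positive a.e. For continuity and strict positivity everywhere, I would use $\phi_\beta=\mathrm e^{\lambda t}\mathrm e^{-tH_\beta}\phi_\beta$ with the continuous kernel. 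A variational route works as an alternative: if $f$ is a minimizer, so is $|f|$ by the diamagnetic inequality, and a Harnack inequality for $-\tfrac12\Delta+\beta Q-\lambda$ with locally bounded potential gives $|f|>0$.

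Part (c) is an Agmon-type decay estimate, which I expect to be the main technical step. Set $w=\mathrm e^{F}$ with $F=a\langle x\rangle^{1+p/2}$, truncated at height $n$ (as $F_n=\min(F,n)$) so that $w\phi_\beta\in\mathcal D$. Testing the eigenvalue equation against $w^2\phi_\beta$ gives the standard IMS/Agmon identity
\[
\tfrac12\|\nabla(w\phi_\beta)\|_2^2+\int (\beta Q-\lambda-\tfrac12|\nabla F_n|^2)w^2\phi_\beta^2=0 .
\]
Since $|\nabla F|^2\le C a^2\langle x\rangle^{p}$ and $Q\ge c|x|^p$ outside $B_R$, choosing $a$ small makes $\beta Q-\lambda-\tfrac12|\nabla F_n|^2\ge 1$ outside a large ball $B_{R'}$. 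On $B_{R'}$ the weight is bounded independently of $n$. Rearranging and letting $n\to\infty$ by monotone convergence bounds $\|w\phi_\beta\|_{L^2}$ and $\|\nabla(w\phi_\beta)\|_{L^2}$, so $w\phi_\beta\in H^1$. Then $L^1$ follows from Cauchy--Schwarz, $\int\phi_\beta\le\|w\phi_\beta\|_2\|w^{-1}\|_2<\infty$.

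For part (d), the family $z\mapsto\mathfrak h_z$ is holomorphic of type (a) (Appendix~\ref{subsec:holomorphic-riesz-app}), and $\lambda(\beta)$ is isolated and simple. Kato's theory therefore gives holomorphic $\lambda(z)$ and $P(z)$ near each $\beta>0$, which agree with the ground energy for real $z$ by continuity of the spectrum and simplicity. For the eigenvector I would take $\tilde\phi_z=P(z)\phi_{\beta_0}/\|P(z)\phi_{\beta_0}\|_2$. For real $z$ this is real analytic, since $P(z)$ is then an orthogonal projection and the norm is a real-analytic positive function. It is also positive, because $\langle P(\beta)\phi_{\beta_0},\phi_{\beta_0}\rangle>0$ selects the positive sign, matching the positive branch from (b).
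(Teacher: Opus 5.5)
Your proposal is correct. For (d) it is the same argument as the paper's: Kato's analytic perturbation theory for the type-(a) family $\mathfrak h_z$, with the local branch pinned to the positive ground state, here via $\langle \phi_\beta,\phi_{\beta_0}\rangle>0$. For (a)--(c) the paper only cites these facts as classical, and you supply the standard arguments: the compact embedding $\mathcal D\hookrightarrow L^2$, positivity improvement plus Perron--Frobenius, and an Agmon estimate with weight $\mathrm e^{a\langle x\rangle^{1+p/2}}$.

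One step in (c) should be phrased more carefully, namely the limit $n\to\infty$. Monotone convergence gives $\mathrm e^{F}\phi_\beta\in L^2$. For the gradient, however, you need the uniform $H^1$ bound on $\mathrm e^{F_n}\phi_\beta$ together with weak compactness and lower semicontinuity in $H^1$, because $\nabla(\mathrm e^{F_n}\phi_\beta)$ is not monotone in $n$.
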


\begin{proof}
The first three assertions are classical; see e.g. \cite{LHB20}. To prove ~\textup{(d)}, note that for $\Re z>0$, the forms
\[
    h_z[f,g]
    =\frac12\int_{\mathbb R^d} \mathrm dx \, \nabla f(x)\cdot\overline{\nabla g(x)}
      +z\int_{\mathbb R^d}\mathrm dx \, Q(x)f(x)\overline{g(x)}
\]
form a holomorphic family with the common domain $\mathcal D$. Their associated
operators constitute a holomorphic family of type~\textup{(B)}; see
\cite[Chapter~VII, Section~4]{Kato}. Since $\lambda(\beta)$ is isolated and
simple, analytic perturbation theory gives, locally about every
$\beta_0>0$, an analytic eigenvalue, an analytic rank-one eigenprojection,
and an analytic normalized eigenvector. On the real axis the normalized
ground state is uniquely determined by the requirement that it be positive.
The local branches therefore agree on overlaps and give the claimed global
real-analytic branches on $(0,\infty)$.
\end{proof}

For the following result, recall that $I_\beta = \lambda(\beta) - \beta \theta_\beta$.

\begin{proposition}
\label{prop:hellmann-feynman}
For every $\beta > 0$, the ground state energy $\lambda(\beta)$ is differentiable with respect to $\beta$, and
\begin{equation}
\lambda'(\beta) = \int_{\mathbb{R}^d} \de x \, Q(x) \phi_{\beta}(x)^2  =: \theta_{\beta}.
\label{eq:hellmann_feynman}
\end{equation}
Let 
\[\mathcal{D}_{\beta,0} = \{g \in \mathcal{D} : g \text{ is real-valued and } \langle g, \phi_{\beta}\rangle = 0\}\]
and define on this real Hilbert space the coercive form $b_{\beta}[f,g] = h_{\beta}[f,g] - \lambda(\beta)\langle f,g\rangle$ and the bounded linear functional $L_{\beta}(g) = \int_{\mathbb{R}^d} \de x \, (Q(x) - \theta_{\beta})\phi_{\beta}(x)g(x) $.
Then, there exists a unique $u_{\beta} \in \mathcal{D}_{\beta,0}$ satisfying
\begin{equation}
b_{\beta}[u_{\beta}, g] = L_{\beta}(g) \quad \text{for all } g \in \mathcal{D}_{\beta,0}.
\label{eq:lax-milgram-u}
\end{equation}
Furthermore, $\beta \mapsto \phi_{\beta}$ is differentiable in the form norm $\|\cdot\|_{\mathcal{D}}$ with $\partial_{\beta}\phi_{\beta} = -u_{\beta}$, and $\lambda(\beta)$ is twice differentiable with
\begin{equation}
\lambda''(\beta) = -2 b_{\beta}[u_{\beta}, u_{\beta}] < 0.
\nonumber
\end{equation}
In particular, $\beta \mapsto \theta_{\beta}$ is strictly decreasing, $\sigma_{\beta}^2 = -\lambda''(\beta) > 0$, and $I_{\beta} = \frac{1}{2} \int_{\mathbb{R}^d} \de x \, |\nabla\phi_{\beta}(x)|^2$.
\end{proposition}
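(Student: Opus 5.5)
We would prove the statement by first-order perturbation theory on the real axis, using the analytic branches $\beta\mapsto\lambda(\beta)$ and $\beta\mapsto\phi_\beta$ from Proposition~\ref{prop:ground-state-spectral-theory}(d). The argument has four steps: Hellmann--Feynman, the corrector equation, the second derivative, and the identification of $I_\beta$.

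\textbf{Step 1 (Hellmann--Feynman).} Start from the identity
\[
\lambda(\beta)=h_\beta[\phi_\beta,\phi_\beta],
\qquad h_\beta=\tfrac12\|\nabla\cdot\|_2^2+\beta\!\int Q|\cdot|^2 .
\]
Differentiate in $\beta$, which is legitimate because $\phi_\beta$ is differentiable in the form norm. This gives
\[
\lambda'(\beta)=\int Q\phi_\beta^2+2h_\beta[\dot\phi_\beta,\phi_\beta].
\]
The eigenvalue equation in weak form gives $h_\beta[\dot\phi_\beta,\phi_\beta]=\lambda(\beta)\langle\dot\phi_\beta,\phi_\beta\rangle$. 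Normalisation $\|\phi_\beta\|_2=1$ forces $\langle\dot\phi_\beta,\phi_\beta\rangle=0$, so the second term vanishes and \eqref{eq:hellmann_feynman} follows.

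\textbf{Step 2 (the corrector $u_\beta$).}
\begin{enumerate}
\item Coercivity of $b_\beta$ on $\mathcal D_{\beta,0}$ follows from the spectral gap $\lambda_1(\beta)-\lambda(\beta)>0$. We get
\[
b_\beta[g,g]\ge(\lambda_1-\lambda)\|g\|_2^2 .
\]
Combining this with the trivial bound $b_\beta[g]\ge h_\beta[g]-\lambda\|g\|^2$, a convex combination of the two gives equivalence with the form norm.
\item $L_\beta$ is bounded on $\mathcal D_{\beta,0}$ because
\[
|\textstyle\int Q\phi_\beta g|\le \|Q^{1/2}\phi_\beta\|_2\,\|Q^{1/2}g\|_2 .
\]
\item Lax--Milgram then yields a unique $u_\beta$ solving \eqref{eq:lax-milgram-u}.
\item To identify $\partial_\beta\phi_\beta=-u_\beta$, differentiate the weak eigenvalue equation
\[
h_\beta[\phi_\beta,g]=\lambda(\beta)\langle\phi_\beta,g\rangle .
\]
This gives
\[
b_\beta[\dot\phi_\beta,g]=-\int(Q-\theta_\beta)\phi_\beta g .
\]
Both $\dot\phi_\beta$ and $-u_\beta$ lie in $\mathcal D_{\beta,0}$ (real-valuedness holds since the branch is real), and both satisfy the same equation. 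Uniqueness gives equality.
\end{enumerate}

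\textbf{Step 3 (second derivative).} Differentiate \eqref{eq:hellmann_feynman}:
\[
\lambda''(\beta)=2\int Q\phi_\beta\dot\phi_\beta=-2\int (Q-\theta_\beta)\phi_\beta u_\beta=-2L_\beta(u_\beta)=-2b_\beta[u_\beta,u_\beta].
\]
Here the shift by $\theta_\beta$ is free because $u_\beta\perp\phi_\beta$.

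\textbf{Step 4 (strict negativity).} Strict negativity of $\lambda''$ is the main obstacle. By coercivity it suffices to show $u_\beta\neq0$, i.e.\ $L_\beta\neq0$ on $\mathcal D_{\beta,0}$. Suppose instead $L_\beta\equiv0$. Then $(Q-\theta_\beta)\phi_\beta$ is orthogonal to $\mathcal D_{\beta,0}$. Since $\mathcal D\supset C_c^\infty$ is dense, this means $(Q-\theta_\beta)\phi_\beta=c\,\phi_\beta$ almost everywhere. Positivity of $\phi_\beta$ then makes $Q$ constant a.e., hence constant by continuity, which contradicts $Q\in\cQ$. One point needs care: the density argument requires testing against $g-\langle g,\phi_\beta\rangle\phi_\beta$ with $g\in C_c^\infty$. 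This is valid since $\phi_\beta\in\mathcal D$, and $Q\phi_\beta\in L^2_{\rm loc}$ suffices for the pairing.

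\textbf{Consequences.} Strict decrease of $\theta_\beta$ and $\sigma_\beta^2>0$ follow at once. Finally, by the eigenvalue identity,
\[
I_\beta=\lambda-\beta\theta_\beta=h_\beta[\phi_\beta]-\beta\int Q\phi_\beta^2=\tfrac12\|\nabla\phi_\beta\|_2^2 .
\]
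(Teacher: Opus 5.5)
Your Steps 2--4 and the identity for $I_\beta$ repeat the paper's computations: Lax--Milgram for $u_\beta$, $\lambda''=-2L_\beta(u_\beta)=-2b_\beta[u_\beta,u_\beta]$, and testing $L_\beta$ against $C_c^\infty$ functions to force $Q$ to be constant. They are correct \emph{provided} $\beta\mapsto\phi_\beta$ is already known to be differentiable as a $\mathcal D$-valued map. That is the gap.

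In Step~1 you justify differentiating $h_\beta[\phi_\beta,\phi_\beta]$ by saying that $\phi_\beta$ ``is differentiable in the form norm''. But that is one of the conclusions of the proposition. Proposition~\ref{prop:ground-state-spectral-theory}(d) only gives real-analyticity of the normalized branch through the analytic eigenprojection, i.e.\ as an $L^2$-valued map. $L^2$-differentiability does not by itself give $\dot\phi_\beta\in\mathcal D$. Without it, several steps lose their footing:
\begin{itemize}
\item $h_\beta[\dot\phi_\beta,\phi_\beta]$ in Step~1 and $b_\beta[\dot\phi_\beta,g]$ in Step~2 are not defined.
\item The claim $\dot\phi_\beta\in\mathcal D_{\beta,0}$, which your uniqueness argument needs, is unsupported.
\item Step~3 differentiates $\int Q\phi_\beta^2\,\de x$, which requires differentiability in $L^2(Q(x)\,\de x)$; that again comes from the form norm.
\end{itemize}

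There are two ways to close this.

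The first is to invoke, and justify, the stronger output of Kato's theory for type-(a) families. The resolvent factors through the square root of the (shifted) real-part operator, so $(w-H_z)^{-1}$ is holomorphic as a map into $\mathcal B(L^2(\mathbb R^d),\mathcal D)$. Hence the Riesz projection and the normalized eigenvector are $\mathcal D$-valued analytic. With that input stated, your argument is a genuinely shorter route than the paper's.

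The second is to argue by hand, as the paper does.
\begin{itemize}
\item Hellmann--Feynman follows from the Rayleigh--Ritz sandwich $\int Q\phi_{\beta+\delta}^2\,\de x\le\delta^{-1}(\lambda(\beta+\delta)-\lambda(\beta))\le\int Q\phi_\beta^2\,\de x$ for $\delta>0$ (reversed for $\delta<0$). Combine it with $L^2$-continuity of $\phi_\beta$, which comes from compactness and simplicity, and Fatou.
\item The same sandwich upgrades the convergence to $\phi_{\beta+\delta}\to\phi_\beta$ in $L^2(Q(x)\,\de x)$.
\item Then apply your corrector equation to the difference quotient $w_\delta=(\phi_{\beta+\delta}-\phi_\beta)/\delta$. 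Its component orthogonal to $\phi_\beta$ satisfies $b_\beta[w_\delta^\perp,g]=-\int(Q-\delta^{-1}(\lambda(\beta+\delta)-\lambda(\beta)))\phi_{\beta+\delta}\,g\,\de x$, so coercivity gives $w_\delta^\perp\to-u_\beta$ in $\mathcal D$.
\item The normalization identity $c_\delta(2+\delta c_\delta)=-\delta\|w_\delta^\perp\|_2^2$ then forces the parallel component $c_\delta\to0$.
\end{itemize}
This \emph{produces} the form-norm derivative $-u_\beta$ rather than presupposing it, and uses only the spectral gap at the single $\beta$ and the compact embedding.
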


\begin{proof}
We begin by proving \eqref{eq:hellmann_feynman}.
Fix $\beta > 0$ and let $\delta > 0$ be a real perturbation. Let $\phi_{\beta}$ and $\phi_{\beta+\delta}$ be the unique positive, $L^2$-normalized ground states for $H_{\beta}$ and $H_{\beta+\delta}$, with eigenvalues $\lambda(\beta)$ and $\lambda(\beta+\delta)$, respectively. By the Rayleigh-Ritz variational principle for $h_{\beta+\delta}$ it holds that
\[
\lambda(\beta+\delta) = h_{\beta+\delta}[\phi_{\beta+\delta}] = h_{\beta}[\phi_{\beta+\delta}] + \delta \int_{\mathbb{R}^d} \de x \, Q(x) \phi_{\beta+\delta}^2 (x).
\]
Testing $h_{\beta+\delta}$ with the trial function $\phi_{\beta} \in \mathcal{D}$ yields
\[
\lambda(\beta+\delta) \le h_{\beta+\delta}[\phi_{\beta}] = \lambda(\beta) + \delta \int_{\mathbb{R}^d} \de x \, Q(x) \phi_{\beta}^2(x).
\]
Rearranging gives $\frac{\lambda(\beta+\delta) - \lambda(\beta)}{\delta} \le \int_{\mathbb{R}^d} \de x \, Q(x) \phi_{\beta}^2 (x)$ for $\delta > 0$. Reversing the roles of $\beta$ and $\beta+\delta$ yields the lower bound $\int_{\mathbb{R}^d} \de x \, Q(x) \phi_{\beta+\delta}^2 (x) \le \frac{\lambda(\beta+\delta) - \lambda(\beta)}{\delta}$. Combining yields
\begin{equation}
\int_{\mathbb{R}^d} \de x \, Q(x) \phi_{\beta+\delta}^2 (x)  \le \frac{\lambda(\beta+\delta) - \lambda(\beta)}{\delta} \le \int_{\mathbb{R}^d} \de x \, Q(x) \phi_{\beta}^2 (x).
\label{eq:rr-sandwich}
\end{equation}
Since $\lambda(\beta+\delta)$ is bounded near $\delta = 0$, the family $\{\phi_{\beta+\delta}\}$ is uniformly bounded in the form domain $\mathcal{D}$. Compactness of the embedding $\mathcal{D}\hookrightarrow L^2(\mathbb{R}^d)$ and simplicity of the positive ground state imply that $\phi_{\beta+\delta}\to\phi_{\beta}$ strongly in $L^2(\mathbb{R}^d)$ as $\delta\to0$. Fatou's Lemma on $Q\phi_{\beta+\delta}^2$ together with \eqref{eq:rr-sandwich} yields $\lim_{\delta\to0}\int_{\mathbb{R}^d}\de x\,Q(x)\phi_{\beta+\delta}^2(x)=\int_{\mathbb{R}^d}\de x\,Q(x)\phi_{\beta}^2(x)$. Taking $\delta\to0$ in~\eqref{eq:rr-sandwich} establishes differentiability of $\lambda(\beta)$ and proves~\eqref{eq:hellmann_feynman}.

For future use we improve the strong convergence of $\phi_{\beta+\delta} \to \phi_\beta$ in $L^2(\bbR^d)$ to strong convergence in the weighted $L^2$-space $L^2(\R^d,Q(x)\,\de x)$. We have seen already that $\phi_{\beta+\delta}\rightharpoonup\phi_\beta$ in $L^2(\mathbb R^d,Q(x)\,\de x)$. 
Together with
\[
 \int_{\mathbb R^d}\de x \, Q(x)\phi_{\beta+\delta}(x)^2
 \longrightarrow
 \int_{\mathbb R^d} \de x \,Q(x)\phi_\beta(x)^2,
\]
the Hilbert-space characterization of strong convergence therefore yields $\phi_{\beta+\delta}\longrightarrow\phi_\beta$ strongly in $L^2(\mathbb R^d,Q(x)\,\de x)$.

By the spectral gap of $H_\beta$, $\lambda_1(\beta) - \lambda(\beta) > 0$, which yields that the quadratic form $b_{\beta}[g,g] \ge (\lambda_1(\beta) - \lambda(\beta))\|g\|_2^2$ is coercive and induces an equivalent norm on the closed real Hilbert subspace $\mathcal{D}_{\beta,0}$. By Cauchy-Schwarz, $L_{\beta}$ is a bounded linear functional on $\mathcal{D}_{\beta,0}$. The Lax-Milgram theorem guarantees a unique $u_{\beta} \in \mathcal{D}_{\beta,0}$ satisfying~\eqref{eq:lax-milgram-u}.

Next, set $w_{\delta} = \frac{\phi_{\beta+\delta} - \phi_{\beta}}{\delta}$ and decompose it as $w_{\delta} = w_{\delta}^\perp + c_{\delta} \phi_{\beta}$ with $w_{\delta}^\perp \in \mathcal{D}_{\beta,0}$. Subtracting the weak eigenvalue equations for $H_{\beta+\delta}$ and $H_{\beta}$ tested against $g \in \mathcal{D}_{\beta,0}$ yields
\[
b_{\beta}[w_{\delta}^\perp, g] = -\int_{\mathbb{R}^d} \de x \, \left(Q(x) - \frac{\lambda(\beta+\delta) - \lambda(\beta)}{\delta}\right) \phi_{\beta+\delta} (x) \, g(x).
\]
As $\delta \to 0$, the right-hand side converges uniformly on unit form-norm vectors $g$ to $-L_{\beta}(g)$. Coercivity of $b_{\beta}$ implies $w_{\delta}^\perp \to -u_{\beta}$ strongly in $\mathcal{D}$. 
It remains to
control the component parallel to $\phi_\beta$. Since
\[
 \delta c_\delta
 =\langle\phi_{\beta+\delta}-\phi_\beta,\phi_\beta\rangle
 \longrightarrow 0,
\]
and $\|\phi_{\beta+\delta}\|_2=\|\phi_\beta\|_2=1$, we have
\[
 0
 =2c_\delta+\delta\|w_\delta\|_2^2
 =2c_\delta+\delta\bigl(\|w_\delta^\perp\|_2^2+c_\delta^2\bigr).
\]
Thus
\[
 c_\delta(2+\delta c_\delta)
 =-\delta\|w_\delta^\perp\|_2^2.
\]
Since $w_\delta^\perp$ is bounded in $\mathcal D$ and
$\delta c_\delta\to0$, it follows that $c_\delta\to0$. Hence
$w_\delta\to-u_\beta$ strongly in $\mathcal D$, proving
$\partial_\beta\phi_\beta=-u_\beta$.

To compute $\lambda''(\beta)$, we again consider the difference quotient of $\lambda'(\beta) = \theta_{\beta}$ for which we already found that
\[
\frac{\lambda'(\beta+\delta) - \lambda'(\beta)}{\delta} = \int_{\mathbb{R}^d} \de x \, Q(x) \, w_{\delta}(x) \, \big(\phi_{\beta+\delta}(x) + \phi_{\beta}(x) \big).
\]
Since $w_{\delta} \to -u_{\beta}$ and $\phi_{\beta+\delta} \to \phi_{\beta}$ strongly in $L^2(\mathbb{R}^d,Q(x)\,\de x)$, taking $\delta \to 0$ yields
\[
\lambda''(\beta) = -2 \int_{\mathbb{R}^d} \de x \, Q(x) \, u_{\beta}(x) \, \phi_{\beta}(x) .
\]
Since $u_{\beta} \in \mathcal{D}_{\beta,0}$, we have $\theta_{\beta} \int_{\mathbb{R}^d} \de x \, u_{\beta}(x)\phi_{\beta}(x) = 0$. Subtracting this zero term gives
\[
\lambda''(\beta) = -2 \int_{\mathbb{R}^d} \de x \, \big(Q(x) - \theta_{\beta}\big)\phi_{\beta}(x) u_{\beta}(x)  = -2 L_{\beta}(u_{\beta}) = -2 b_{\beta}[u_{\beta}, u_{\beta}],
\]
where the last equality follows from taking $g = u_{\beta}$ in~\eqref{eq:lax-milgram-u}.
By coercivity, $b_{\beta}[u_{\beta}, u_{\beta}] \ge 0$. If $u_{\beta} = 0$, then $L_{\beta}$ vanishes on $\mathcal{D}_{\beta,0}$ and on $\phi_{\beta}$, so $L_{\beta} = 0$ on all of $\mathcal{D}$. Testing against $C_c^\infty(\mathbb{R}^d)$ would give $(Q - \theta_{\beta})\phi_{\beta} = 0$ a.e., forcing $Q$ to be constant since $\phi_{\beta} > 0$ everywhere. This contradicts the assumptions on $Q$. Thus $u_{\beta} \neq 0$, proving $\lambda''(\beta) < 0$.

Finally, taking the inner product of the eigenvalue equation $H_\beta \phi_\beta = \lambda(\beta)\phi_\beta$ with $\phi_\beta$ gives $\lambda(\beta) = \frac{1}{2}\|\nabla\phi_\beta\|_2^2 + \beta \theta_\beta$, which rearranges directly to $I_\beta = \lambda(\beta) - \beta \theta_\beta = \frac{1}{2}\|\nabla\phi_\beta\|_2^2$.
\end{proof}

Recall that
\[
    q_*=\min_{x\in\mathbb R^d}Q(x),
    \qquad
    \theta_\beta=\lambda'(\beta)
      =\int_{\mathbb R^d} \mathrm dx \, Q(x)\phi_\beta(x)^2.
\]
The following result justifies the definition of $\beta_\theta$ used in the
main theorem.

\begin{lemma}
\label{lem:range-constraint-coupling}
Assume that $Q\colon\mathbb R^d\to[0,\infty)$ is continuous,
$Q(x)\to\infty$ as $|x|\to\infty$, and is not constant.  Then
\[
    \lim_{\beta\downarrow0}\lambda'(\beta)=\infty,
    \qquad
    \lim_{\beta\uparrow\infty}\lambda'(\beta)=q_*.
\]
Consequently,
\[
    \beta\longmapsto\lambda'(\beta)
\]
is a continuous strictly decreasing bijection from $(0,\infty)$ onto
$(q_*,\infty)$.  In particular, for every $\theta>q_*$ there is a unique
$\beta_\theta>0$ such that $\lambda'(\beta_\theta)=\theta$.
\end{lemma}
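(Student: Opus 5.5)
By Proposition~\ref{prop:hellmann-feynman}, $\beta\mapsto\lambda'(\beta)=\theta_\beta=\int\de x\,Q\phi_\beta^2$ is continuous (indeed differentiable) and strictly decreasing on $(0,\infty)$, since $\lambda''<0$. Once the two limits are established, the intermediate value theorem gives a continuous strictly decreasing bijection onto $(q_*,\infty)$, and uniqueness of $\beta_\theta$ follows. The work therefore reduces to the two limits. For both we use the concavity/Rayleigh--Ritz structure: $\lambda(\beta)=\inf_{\|f\|_2=1}\{\tfrac12\|\nabla f\|_2^2+\beta\int Q|f|^2\}$ is concave, $\lambda(0+)=0$, and $\lambda(\beta)\ge\beta\lambda'(\beta)$ because $\lambda(\beta)=\tfrac12\|\nabla\phi_\beta\|^2+\beta\theta_\beta$.

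\emph{Limit $\beta\uparrow\infty$.} The lower bound $\theta_\beta\ge q_*$ is trivial since $\phi_\beta^2$ is a probability density. For the upper bound, fix $\varepsilon>0$ and choose a ball $B$ on which $Q\le q_*+\varepsilon$ (continuity at a minimizer). Take a fixed normalized $f\in C_c^\infty(B)$. Rayleigh--Ritz gives $\lambda(\beta)\le C_f+\beta(q_*+\varepsilon)$. Concavity gives $\lambda(\beta)\ge\lambda(\beta/2)+\tfrac{\beta}{2}\lambda'(\beta)$, and also $\lambda(\beta/2)\ge\tfrac{\beta}{2}q_*$ since $Q\ge q_*$. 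Combining,
\[
\tfrac{\beta}{2}\lambda'(\beta)\le C_f+\beta(q_*+\varepsilon)-\tfrac{\beta}{2}q_*,
\]
so $\lambda'(\beta)\le q_*+2\varepsilon+2C_f/\beta$. Letting $\beta\to\infty$ and then $\varepsilon\to0$ gives $\limsup\lambda'(\beta)\le q_*$. (Alternatively one can use $\lambda(\beta)-\lambda(\beta_0)=\int_{\beta_0}^\beta\lambda'$ together with monotonicity; the two routes are equivalent.)

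\emph{Limit $\beta\downarrow0$.} Since $\lambda'$ is decreasing, the limit $L=\lim_{\beta\downarrow0}\lambda'(\beta)$ exists in $(q_*,\infty]$. Suppose $L<\infty$. Then $\lambda'\le L$ throughout, so $\lambda(\beta)\le\lambda(0+)+L\beta\le L\beta$, using $\lambda(0+)=0$. That $\lambda(\beta)\to0$ follows from Rayleigh--Ritz with spread-out trial functions $f_R(x)=R^{-d/2}f(x/R)$, which give $\lambda(\beta)\le CR^{-2}+\beta\int Q|f_R|^2$; choosing $R$ large and then $\beta$ small makes this arbitrarily small. It then suffices to show $\lambda(\beta)/\beta\to\infty$, which contradicts $\lambda(\beta)\le L\beta$. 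Fix $M$ and let $B_M=\{Q\le M\}$, a bounded set since $Q$ is confining. For normalized $f$,
\[
\tfrac12\|\nabla f\|^2+\beta\int Q|f|^2\ \ge\ \tfrac12\|\nabla f\|^2+\beta M\Bigl(1-\int_{B_M}|f|^2\Bigr).
\]
We must show the right-hand side is at least $\beta M/2$ for small $\beta$, that is, that no normalized $f$ can keep half its mass in $B_M$ while having kinetic energy $o(\beta)$. By a Faber--Krahn/Sobolev-type localization bound, the mass $\int_{B_M}|f|^2$ is controlled as $\beta\to0$ in terms of $\|\nabla f\|$: in $d\ge3$ by Sobolev and H\"older, in $d\le2$ by Nash's inequality or an explicit argument. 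Concretely, if $\|\nabla f\|_2^2\le2\beta M$, then the mass of $f$ in any bounded set $B$ tends to $0$ uniformly as $\beta\to0$, via Nash: $\|f\|_{L^2(B)}^2\le|B|^{\cdots}\|\nabla f\|^{\cdots}$. This yields $\lambda(\beta)\ge\beta M/2$ for small $\beta$, so $L\ge M/2$ for every $M$, a contradiction.

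The main obstacle is this last localization estimate, particularly in $d=1,2$, where low kinetic energy does not directly bound local mass via Sobolev embedding. The plan there is to use the uncertainty bound $\|f\|_{L^2(B)}^2\le|B|\,\|f\|_\infty^2$ together with a Nash/Gagliardo--Nirenberg interpolation such as $\|f\|_\infty^2\le C\|f\|_2\|\nabla f\|_2$ in $d=1$, or a truncation argument, to get local mass $\lesssim\|\nabla f\|_2^{\alpha}$ with some $\alpha>0$.
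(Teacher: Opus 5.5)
Your proof is correct. The monotonicity and bijection step and the limit $\beta\uparrow\infty$ match the paper's argument: a trial function supported where $Q\le q_*+\varepsilon$, concavity giving $\tfrac{\beta}{2}\lambda'(\beta)\le\lambda(\beta)-\lambda(\beta/2)$, and $\lambda(\beta/2)\ge\beta q_*/2$.

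The limit $\beta\downarrow0$ is where you take a genuinely different route. The paper argues by compactness on the ground states. If $\theta_{\beta_n}\le M$ along $\beta_n\downarrow0$, then $\lambda(\beta_n)=\tfrac12\|\nabla\phi_n\|_2^2+\beta_n\theta_{\beta_n}\to0$ forces $\|\nabla\phi_n\|_2\to0$. The bound $\int Q\phi_n^2\le M$ gives tightness. Rellich on balls with a diagonal subsequence then yields a local limit that is constant (zero gradient) and nonzero (tightness), which is incompatible with $\|\phi_n\|_2=1$.

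You never touch $\phi_\beta$. You turn $L<\infty$ into $\lambda(\beta)\le L\beta$, and then prove the stronger fact $\lambda(\beta)/\beta\to\infty$ directly from the Rayleigh--Ritz problem. The tool is a quantitative uncertainty estimate $\int_B|f|^2\le C_B\|\nabla f\|_2^{\alpha}$ for $\|f\|_2=1$.

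Both proofs rest on the same principle: a normalized function with vanishing kinetic energy cannot keep mass in a bounded set. The paper's version is soft and dimension-free and needs only Rellich. Yours is quantitative, at the price of a functional inequality. If you let $M$ depend on $\beta$, it even gives a rate, for instance $\lambda(\beta)\gtrsim\beta^{2/(p+2)}$ when $Q\ge c|x|^p$ near infinity.

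One caution about the inequality you flagged as the main obstacle. In $d=2$ you cannot pass through $\|f\|_\infty$, since $H^1\not\hookrightarrow L^\infty$. Nash's inequality in its usual form involves $\|f\|_1$ and does not localize directly. Two fixes work:
\begin{itemize}
\item Use Ladyzhenskaya's inequality $\|f\|_4^2\le C\|f\|_2\|\nabla f\|_2$, which gives $\int_B|f|^2\le C|B|^{1/2}\|\nabla f\|_2$.
\item Uniformly in $d$, split $\widehat f$ at frequency $r$ to obtain $\|f\|_{L^2(B)}\le C_d|B|^{1/2}r^{d/2}+r^{-1}\|\nabla f\|_2$, and optimize over $r$.
\end{itemize}
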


\begin{proof}
Continuity and strict monotonicity of $\lambda'$ follow from the real
analyticity of $\lambda$ and the strict inequality $\lambda''(\beta)<0$
proved above.  Moreover,
\[
    \lambda'(\beta)-q_*
    =\int_{\mathbb R^d}\de x \, (Q(x)-q_*)\phi_\beta(x)^2>0.
\]
Indeed, $Q-q_*\geq0$, the ground state is strictly positive, and $Q$ is not
constant.  It remains to identify the two endpoint limits.
We first prove that
\begin{equation}
    \lambda(\beta)\longrightarrow0
    \qquad\text{as }\beta\downarrow0.
\label{eq:lambda-small-beta}
\end{equation}
Choose $\eta\in C_c^\infty(B(0,1))$ with $\|\eta\|_2=1$ and put
$f_L(x)=L^{-d/2}\eta(x/L)$.
Then $\|f_L\|_2=1$ and
\[
    \frac12\|\nabla f_L\|_2^2
    =\frac{1}{2L^2}\|\nabla\eta\|_2^2,
    \qquad
    \int_{\mathbb R^d} \de x \, Q(x)|f_L(x)|^2
    \leq \sup_{|x|\leq L}Q(x)<\infty.
\]
By Rayleigh-Ritz,
\[
    0\leq\lambda(\beta)
    \leq \frac{1}{2L^2}\|\nabla\eta\|_2^2
       +\beta\sup_{|x|\leq L}Q(x).
\]
Given $\varepsilon>0$, we first choose $L$ so that the kinetic term is at
most $\varepsilon/2$, and then choose $\beta$ so that the potential term is
at most $\varepsilon/2$.  This proves \eqref{eq:lambda-small-beta}.

Suppose now, contrary to the first claimed limit, that there are
$\beta_n\downarrow0$ and $M<\infty$ such that $\theta_{\beta_n}=\lambda'(\beta_n)\leq M$ for every $n$.
Write $\phi_n=\phi_{\beta_n}$.  Taking the inner product of the
ground-state equation with $\phi_n$ gives
\[
    \lambda(\beta_n)
    =\frac12\|\nabla\phi_n\|_2^2
      +\beta_n\theta_{\beta_n}.
\]
In view of \eqref{eq:lambda-small-beta} it must hold that
\begin{equation}
    \|\nabla\phi_n\|_2\longrightarrow0.
\label{eq:derivative-vanishes}
\end{equation}
On the other hand, the bound
\[
    \int_{\mathbb R^d}\de x \, Q(x) \phi_n(x)^2 \leq M
\]
makes the probability measures $\phi_n(x)^2\,\mathrm dx$ tight, because
$Q(x)\to\infty$.  In particular, one can choose $R<\infty$, independently
of $n$, such that
\begin{equation}
    \int_{B(0,R)} \de x \, \phi_n(x)^2\geq\frac12.
\label{eq:tight-mass}
\end{equation}
For each $m\geq R$, the sequence $(\phi_n)$ is bounded in $H^1(B(0,m))$.
By Rellich compactness and a diagonal subsequence, there is
$\phi\in L^2_{\mathrm{loc}}(\mathbb R^d)$ such that
$\phi_n\to\phi$ strongly in $L^2(B(0,m))$ for every $m$.  Equation
\eqref{eq:derivative-vanishes} implies $\nabla\phi=0$ in the distributional
sense.  Since $\mathbb R^d$ is connected, $\phi=c$ almost everywhere for
some constant $c$.  The mass bound \eqref{eq:tight-mass} gives
$|c|^2|B(0,R)|\geq1/2$, so $c\neq0$.  On the other hand, local $L^2$
convergence and $\|\phi_n\|_2=1$ give
$|c|^2|B(0,m)|\leq1$ for every $m$, a contradiction as $m\to\infty$.
Hence
$\lambda'(\beta)\to\infty$ as $\beta\downarrow0$.
We finally consider $\beta\to\infty$.  Fix $\varepsilon>0$ and choose
$x_*\in\mathbb R^d$ with $Q(x_*)=q_*$.  By continuity, there is a nonempty
open neighborhood $U_\varepsilon$ containing $x_*$ such that $Q(x)\leq q_*+\varepsilon$ for $x\in U_\varepsilon$.
Choose $f_\varepsilon\in C_c^\infty(U_\varepsilon)$ with
$\|f_\varepsilon\|_2=1$ and set $K_\varepsilon=\frac12\|\nabla f_\varepsilon\|_2^2$.
Rayleigh-Ritz gives
\begin{equation}
    \lambda(\beta)
    \leq K_\varepsilon+\beta(q_*+\varepsilon).
\label{eq:large-beta-upper}
\end{equation}
Since $Q\geq q_*$, we also have
\begin{equation}
    \lambda(\beta/2)\geq\frac\beta2q_*.
\label{eq:large-beta-lower}
\end{equation}
The function $\lambda$ is concave, being the infimum of affine functions of
$\beta$.  Therefore its derivative at the right endpoint is bounded above by
\[
    \lambda'(\beta)
    \leq \frac{\lambda(\beta)-\lambda(\beta/2)}{\beta/2}.
\]
Using \eqref{eq:large-beta-upper} and \eqref{eq:large-beta-lower}, we obtain
\[
    q_*<\lambda'(\beta)
    \leq q_*+2\varepsilon+\frac{2K_\varepsilon}{\beta}.
\]
Taking first $\beta\to\infty$ and then $\varepsilon\downarrow0$ proves $\lim_{\beta\to\infty}\lambda'(\beta)=q_*$.
The endpoint limits, continuity, and strict monotonicity complete the proof.
\end{proof}

\begin{proposition}
\label{prop:ground-state-transform-rigorous}
The process
\begin{equation}
 M_t^\beta
 = \mathrm e^{\lambda(\beta)t-\beta A_t}
   \frac{\phi_\beta(W_t)}{\phi_\beta(x)},
 \qquad t\ge0,
 \nonumber
\end{equation}
is a positive mean-one  $(\mathcal F_t) $-martingale under
 $\mathbb P_x $.  The consistent measures
\[
 \de \mathbb P_x^\beta|_{\mathcal F_t}
 =M_t^\beta\,\de \mathbb P_x|_{\mathcal F_t}
\]
define the law of the diffusion
\begin{equation}
 \de X_t=\de B_t+\nabla\log\phi_\beta(X_t)\,\de t,
 \qquad X_0=x.
 \label{eq:ground-state-sde-rigorous}
\end{equation}
\end{proposition}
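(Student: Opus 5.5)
The proof has two parts. First I show that $M^\beta$ is a mean-one martingale, using only the Feynman-Kac eigenfunction identity and the Markov property. Then I identify the tilted measure as the law of the diffusion via Girsanov's theorem, applied with an Itô-type formula for $\log\phi_\beta$.

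\textbf{Martingale property.} For the mean-one property I use the pointwise eigenfunction identity $\mathbb E^{\prob_y}[\mathrm e^{-\beta A_t}\phi_\beta(W_t)]=\mathrm e^{-\lambda(\beta)t}\phi_\beta(y)$ for every $y$. This is the continuous version of $\mathrm e^{-tH_\beta}\phi_\beta=\mathrm e^{-\lambda(\beta)t}\phi_\beta$, and it was already used in the proof of Proposition~\ref{prop:partition-asymptotic-rigorous}. To upgrade the almost-everywhere identity of Theorem~\ref{thm:complex-fk-app} to every point, I will:
\begin{enumerate}
\item[(i)] note that both sides are continuous in $y$, since the right side is continuous because $\phi_\beta$ is, and the left side equals $\mathbb E^{\prob_y}[\mathrm e^{-\beta A_s}(\mathrm e^{-(t-s)H_\beta}\phi_\beta)(W_s)]$, which is continuous by heat-kernel smoothing and Lemma~\ref{lem:real-boundary-smoothing};
\item[(ii)] conclude that two continuous functions agreeing almost everywhere agree everywhere.
\end{enumerate}
Positivity of $M^\beta_t$ follows from $\phi_\beta>0$. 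For the martingale property, take $s<t$ and apply the Markov property at time $s$ together with additivity $A_t=A_s+A_{t-s}\circ\vartheta_s$:
\[
\mathbb E^{\prob_x}[M_t^\beta\mid\mathcal F_s]=\mathrm e^{\lambda(\beta)t-\beta A_s}\phi_\beta(x)^{-1}\,\mathbb E^{\prob_{W_s}}[\mathrm e^{-\beta A_{t-s}}\phi_\beta(W_{t-s})]=M_s^\beta.
\]
Integrability comes for free, because the expectations are finite by the identity itself. Consistency of the measures $\mathbb P_x^\beta|_{\mathcal F_t}$ then follows, and Kolmogorov extension on $C([0,\infty);\mathbb R^d)$ produces $\mathbb P^\beta_x$.

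\textbf{Identification with the SDE.} I plan to show that $\log\phi_\beta\in W^{2,\mathrm{loc}}$-type regularity gives the formula
\[
\log M_t^\beta=\int_0^t\nabla\log\phi_\beta(W_s)\cdot\de W_s-\frac12\int_0^t|\nabla\log\phi_\beta(W_s)|^2\,\de s.
\]
This is Itô's formula applied to $\log\phi_\beta(W_t)$, using $\frac12\Delta\phi_\beta=(\beta Q-\lambda)\phi_\beta$ and the identity $\frac12\Delta\log\phi=\frac{\Delta\phi}{2\phi}-\frac12|\nabla\log\phi|^2$. Once this holds, Girsanov's theorem shows that under $\mathbb P^\beta_x$ the process $B_t=W_t-x-\int_0^t\nabla\log\phi_\beta(W_s)\,\de s$ is a Brownian motion, so the canonical process is a weak solution of \eqref{eq:ground-state-sde-rigorous}.

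\textbf{Main obstacle.} The hard step is regularity: $Q$ is only continuous, so $\phi_\beta\in W^{2,r}_{\mathrm{loc}}$ for all $r<\infty$ by elliptic $L^r$ theory, and hence $\phi_\beta\in C^{1}$ but not necessarily $C^2$. I would handle this as follows:
\begin{enumerate}
\item[(i)] use Krylov's version of Itô's formula for $W^{2,d}_{\mathrm{loc}}$ functions along Brownian motion, or equivalently mollify $\phi_\beta$ and pass to the limit via occupation-density bounds;
\item[(ii)] localize with the stopping times $\tau_n=\inf\{t:|W_t|\ge n\}$, on which $\phi_\beta$ is bounded below and $\nabla\log\phi_\beta$ is bounded;
\item[(iii)] use that $\mathbb P^\beta_x(\tau_n\le t)\to0$, which holds because $M^\beta$ is a true martingale with mean one, so no mass escapes; this is exactly why the first part is needed.
\end{enumerate}
The SDE is understood weakly, so this localized Girsanov argument suffices and uniqueness of the solution is not required.
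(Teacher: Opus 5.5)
Your proposal is correct, but it takes a genuinely different route from the paper. The paper gives no argument of its own and simply refers to \cite[Chapters 4 and 5]{LHB20} for the construction of the ground-state-transformed process.

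You prove the statement directly, in two steps. First, you get the mean-one martingale property from the pointwise eigenfunction identity and the Markov property; this is the same identity the paper already uses in Proposition~\ref{prop:partition-asymptotic-rigorous}. Second, you get the SDE from elliptic regularity ($\phi_\beta\in W^{2,r}_{\mathrm{loc}}$ for all $r<\infty$, so $\nabla\log\phi_\beta$ is continuous), Krylov's It\^o formula, and Girsanov applied to $M^\beta$ viewed as the stochastic exponential of $\int_0^\cdot\nabla\log\phi_\beta(W_s)\cdot\mathrm dW_s$.

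Your route is self-contained and makes explicit two things the citation hides. Merely continuous $Q$ suffices for the weak-SDE interpretation. And the true-martingale property is exactly what replaces a Novikov condition for the unbounded drift and rules out explosion. The citation, in turn, places the result inside the general theory of the transformed Markov semigroup, with invariant density $\phi_\beta^2$, which you do not need for the paper's purposes.

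Two small tightenings.

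First, your continuity argument in step (i) is too quick: the factor $\mathrm e^{-\beta A_s}$ blocks a direct appeal to heat-kernel smoothing. Instead, argue as follows.
\begin{itemize}
\item The map $y\mapsto\mathbb E^{\prob_y}[\mathrm e^{-\beta\int_s^t Q(W_r)\,\mathrm dr}\phi_\beta(W_t)]=P_s(\mathrm e^{-(t-s)H_\beta}\phi_\beta)(y)$ is continuous.
\item The function $\mathrm e^{-(t-s)H_\beta}\phi_\beta$ is bounded, by \eqref{eq:point-evaluation-bound}.
\item The difference from the full expression is at most a constant times $\mathbb E^{\prob_y}[1-\mathrm e^{-\beta A_s}]$, which vanishes locally uniformly in $y$ as $s\downarrow0$.
\end{itemize}

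Second, you say uniqueness is not needed, but it comes for free. Because $\nabla\log\phi_\beta$ is locally bounded, localized Girsanov shows that every non-exploding weak solution of \eqref{eq:ground-state-sde-rigorous} has law $\mathbb P_x^\beta$. This justifies speaking of \emph{the} law of the diffusion.
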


\begin{proof}
    See \cite[Chapters 4 and 5]{LHB20}.
\end{proof}

\section{A Stone Lemma}

We use the Fourier-transform convention
\[
 \widehat h(u)=\int_\R \de y \, \mathrm e^{iuy}h(y).
\]
For completeness, we state the precise smoothing lemma needed below.  If
$h\geq0$, set
\[
 U_d(h)=d\sum_{k\in\mathbb Z}\sup_{[kd,(k+1)d)}h,
 \qquad
 L_d(h)=d\sum_{k\in\mathbb Z}\inf_{[kd,(k+1)d)}h.
\]
We call $h$ directly Riemann-integrable if these sums are finite for some
mesh and both converge to $\int_\R \de x \, h(x)$ as $d\downarrow0$.  A real-valued $h$
is directly Riemann-integrable if both $h^+$ and $h^-$ have this property.
\begin{lemma}
\label{lem:stone-smoothing}
Let $(\mu_{T,x})_{T\geq1,x\in\mathbb R^d}$ be probability measures on $\mathbb R$, with
characteristic functions $\chi_{T,x}$.  Suppose that, for every compact
$K\subset\mathbb R^d$, the following hold for some $\sigma^2>0$.
\begin{enumerate}
 \item Locally uniformly in $(x,v)$, \label{equ:stone_1}
       \[
       \chi_{T,x}(v/\sqrt T)\longrightarrow \mathrm e^{-\sigma^2v^2/2}.
       \]
 \item There are $\varepsilon,c,C,\gamma>0$ such that for $T$ large enough
       \[
       \sup_{x\in K}|\chi_{T,x}(u)|
       \leq C\bigl(\mathrm e^{-cTu^2}+ \mathrm e^{-\gamma T}\bigr),
       \qquad |u|\leq\varepsilon.
       \]\label{equ:stone_2}
 \item For every $0<\delta<R<\infty$, there are $C_{K,\delta,R},\eta>0$
       such that for $T$ large enough
       \[
       \sup_{\substack{x\in K\\\delta\leq|u|\leq R}}
       |\chi_{T,x}(u)|
       \leq C_{K,\delta,R}\mathrm e^{-\eta T}.
       \]\label{equ:stone_3}
\end{enumerate}
Then, for every directly Riemann-integrable $h$ and every $M<\infty$,
\begin{equation}
 \sup_{\substack{x\in K\\|a|\leq M}}
 \left|
  \sqrt T\int_\R \mu_{T,x}(\de y) \, h(y-a)
  -\frac1{\sqrt{2\pi\sigma^2}}
   \int_\R \de y \, h(y)
 \right|
 \longrightarrow0.
 \label{eq:abstract-stone-conclusion}
\end{equation}
\end{lemma}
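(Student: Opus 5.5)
We follow Stone's smoothing approach: first establish the conclusion for test functions with compactly supported Fourier transform, then pass to directly Riemann-integrable $h$ by sandwiching.

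\emph{Step 1: smooth kernels.} Let $k\in L^1(\mathbb R)$ satisfy $\widehat k\in C_c(\mathbb R)$, with $\operatorname{supp}\widehat k\subset[-R,R]$. Fourier inversion gives
\[
 \sqrt T\int\mu_{T,x}(\de y)\,k(y-a)=\frac{\sqrt T}{2\pi}\int_{-R}^{R}\de u\,\widehat k(u)\,\mathrm e^{-iua}\,\chi_{T,x}(-u).
\]
We split the $u$-integral into $|u|\le\varepsilon$ and $\varepsilon\le|u|\le R$. By hypothesis~\eqref{equ:stone_3}, the annular piece is $O(\sqrt T\mathrm e^{-\eta T})\to0$, uniformly in $x\in K$ and in $a$. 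On $|u|\le\varepsilon$ we substitute $u=v/\sqrt T$, which turns the integrand into $\widehat k(v/\sqrt T)\mathrm e^{-iva/\sqrt T}\chi_{T,x}(-v/\sqrt T)$ on $|v|\le\varepsilon\sqrt T$. Hypothesis~\eqref{equ:stone_2} dominates this by $C(\mathrm e^{-cv^2}+\mathrm e^{-\gamma T})$, and the $\mathrm e^{-\gamma T}$ term integrates to $O(\sqrt T\mathrm e^{-\gamma T})$. Hypothesis~\eqref{equ:stone_1}, together with continuity of $\widehat k$ at $0$ and $|a|\le M$, gives pointwise convergence to $\widehat k(0)\mathrm e^{-\sigma^2v^2/2}$, locally uniformly. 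Dominated convergence then yields the limit $\frac{\widehat k(0)}{2\pi}\sqrt{2\pi/\sigma^2}=\frac{1}{\sqrt{2\pi\sigma^2}}\int k$. Uniformity in $(x,a)\in K\times[-M,M]$ follows by the same bounds, or by contradiction along sequences combined with the local uniformity in~\eqref{equ:stone_1}.

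\emph{Step 2: approximation.} Fix a nonnegative kernel $\kappa$ with $\widehat\kappa$ compactly supported and $\int\kappa=1$, for example a Fejér-type kernel $\kappa(y)=c\,(\sin y/y)^2$, and set $\kappa_r(y)=r^{-1}\kappa(y/r)$. Step 1 applies to the convolution $\mathbf 1_{I}*\kappa_r$ for any bounded interval $I$, and also to $\kappa_r(\cdot-b)$ itself. The standard Stone argument then shows that, for an interval $I$ of length $\ell$, $\sqrt T\mu_{T,x}(I+a)$ is asymptotically bounded above by $(\ell+2\rho)/\sqrt{2\pi\sigma^2}+o_\rho(1)$ and below by $(\ell-2\rho)/\sqrt{2\pi\sigma^2}-o_\rho(1)$. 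The idea is that $\mathbf 1_I\le(1+o(1))\,\mathbf 1_{I^{\rho}}*\kappa_r$ plus a tail error, and the tail $\int_{|y|>\rho}\kappa_r$ is small. A uniform a priori bound is needed to control this tail contribution: $\sup_{x,a,T}\sqrt T\mu_{T,x}([a,a+1])<\infty$. This also follows from Step 1, using a nonnegative $k$ with $k\ge\mathbf 1_{[0,1]}$ and compact Fourier support.

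\emph{Step 3: general $h$.} For $h\ge0$ directly Riemann integrable, we sandwich $h$ between the step functions $\sum_k \inf_{[kd,(k+1)d)}h\,\mathbf 1_{[kd,(k+1)d)}$ and $\sum_k \sup_{[kd,(k+1)d)}h\,\mathbf 1_{[kd,(k+1)d)}$. Finitely many cells are handled by Step 2. The infinite tails are controlled by the uniform unit-interval bound from Step 2 times $\sum_{|k|>N}\sup h$, which is small because $U_d(h)<\infty$. Letting $d\downarrow0$ gives the result, and the real-valued case follows by splitting $h=h^+-h^-$.

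The main obstacle is Step 2, namely getting the one-sided interval bounds uniformly in $(x,a)$, since the smoothing kernel has noncompact support. This is where the a priori local bound is essential, and that bound itself is obtained through Step 1.
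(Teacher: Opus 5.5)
Your proposal is correct and follows essentially the same route as the paper. Both arguments handle band-limited test functions via Fourier inversion and the three hypotheses, then establish a shift-uniform concentration bound, interval asymptotics, and a step-function sandwich for directly Riemann-integrable $h$. The only real difference is that you get the interval asymptotics by Fej\'er-kernel smoothing, with the a priori bound absorbing the kernel's tail on the minorant side, whereas the paper uses Beurling--Selberg majorants and minorants. One point to make explicit: justify that a priori bound from the absolute-value version of the Step~1 estimate, where the phase $\mathrm e^{-iva/\sqrt T}$ has modulus one. The Step~1 limit itself is only uniform for $|a|\le M$, while you need the bound over all shifts.
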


\begin{proof}
We first treat $h\in L^1(\R)$ such that
$\widehat h\in C_c(\R)$.  If
$\operatorname{supp}\widehat h\subset[-R,R]$, Fourier inversion gives
\begin{align*}
 \sqrt T\int_\R \mu_{T,x}(\de y) \, h(y-a)
 =
 \frac{\sqrt T}{2\pi}
 \int_{-R}^R \de u \, \mathrm e^{iua}\widehat h(u)\chi_{T,x}(-u).
\end{align*}
On $|u|\leq\varepsilon$, substitute $v=\sqrt T\,u$.  Assumptions \eqref{equ:stone_1}-\eqref{equ:stone_2}
and dominated convergence show, uniformly for $x\in K$ and $|a|\leq M$,
that this part converges to
\[
 \frac{\widehat h(0)}{2\pi}
 \int_\R \de v \, \mathrm e^{-\sigma^2v^2/2}
 =\frac1{\sqrt{2\pi\sigma^2}}\int_\R \de x \, h(x).
\]
The contribution of $\varepsilon\leq|u|\leq R$ tends to zero by Assumption \eqref{equ:stone_3}.
Thus \eqref{eq:abstract-stone-conclusion} holds for band-limited test
functions.

We record two standard consequences.  First, using Beurling-Selberg majorants, for every bounded interval $I$ and every
$\rho>0$, there exist integrable functions $b^-_{I,\rho}$ and
$b^+_{I,\rho}$ with compactly supported Fourier transforms such that
\[
 b^-_{I,\rho}\leq\mathbf 1_I\leq b^+_{I,\rho},
 \qquad
 \int_\R \de x \, \big(b^+_{I,\rho} (x) -b^-_{I,\rho} (x)\big)\leq\rho.
\]
This is quite classical; see e.g. the works of Stone
\cite{StoneLLT,StoneRatio}.  Applying the already proved band-limited limit
and then letting $\rho\downarrow0$ gives, for every bounded interval $I$,
\begin{equation}
 \sqrt T\,\mu_{T,x}(I+a)
 \longrightarrow
 \frac{|I|}{\sqrt{2\pi\sigma^2}},
 \label{eq:interval-llt}
\end{equation}
locally uniformly in $(x,a)$.

Second, we need a uniform concentration bound.  Choose a nonnegative
$\kappa\in L^1(\R)$ such that $\int_\R \de x \, \kappa (x) =1$ and
$\widehat\kappa$ has compact support; for example, a normalized version of
$(\sin y/y)^2$.  Fix $r>0$ and put
$m_r=\int_{-r}^r \de y \,\kappa(y)>0$.  If $I=[b,b+d]$, then
\[
 G_I(y)=m_r^{-1}
 \bigl(\mathbf 1_{[b-r,b+d+r]}*\kappa\bigr)(y)
\]
is nonnegative, has compactly supported Fourier transform, and satisfies
$G_I\geq\mathbf 1_I$.  Applying the Fourier estimate used in the first paragraph,
but taking absolute values rather than a limit, yields
\begin{equation}
 \sup_{T\geq1}\sup_{x\in K}\sup_{b\in\bbR}
 \sqrt T\,\mu_{T,x}([b,b+d])<\infty
 \label{eq:uniform-concentration}
\end{equation}
for every fixed $d>0$.  Crucially, the translation $b$ contributes only a
complex factor of modulus one, so the bound is uniform over all $b$.

Let now $h\geq0$ be directly Riemann-integrable.  For a mesh $d>0$, put
\[
 I_k=[kd,(k+1)d),\qquad
 m_k=\inf_{I_k}h,\qquad M_k=\sup_{I_k}h.
\]
Then
\[
 \sum_km_k\mu_{T,x}(I_k+a)
 \leq\int \mu_{T,x}(\de y) \, h(y-a)
 \leq\sum_kM_k\mu_{T,x}(I_k+a).
\]
For a finite number of blocks, \eqref{eq:interval-llt} applies uniformly for
$x\in K$ and $|a|\leq M$.  The tails are uniformly negligible by
\eqref{eq:uniform-concentration} and the summability of
$(M_k)_{k\in\mathbb Z}$.  Consequently,
\begin{align*}
 \frac{d}{\sqrt{2\pi\sigma^2}}\sum_km_k
 &\leq \liminf_{T\to\infty}
       \sqrt T\int \mu_{T,x}(\de y) \, h(y-a)\\
 &\leq \limsup_{T\to\infty}
       \sqrt T\int \mu_{T,x}(\de y) \, h(y-a)
 \leq\frac{d}{\sqrt{2\pi\sigma^2}}\sum_kM_k,
\end{align*}
with the inequalities uniform on $x\in K$, $|a|\leq M$.  Letting
$d\downarrow0$ proves the assertion for nonnegative $h$.  Applying the result
to the positive and negative parts proves it for every directly
Riemann-integrable $h$.
\end{proof}

\end{appendix}

\end{document}

%% file: commands.tex
\newcommand{\bea}{\begin{eqnarray}}
\newcommand{\eea}{\end{eqnarray}}
\newcommand{\<}{\langle}
\renewcommand{\>}{\rangle}

\newcommand\eg{{\text{\eg~}}}

\def\spec{{\rm spec}}

\def\cF{{\mathcal F}}

\def\Z{{\mathbb Z}}

\def\R{{\mathbb R}}

\def\Z{{\mathbb Z}}

\def\de{{\rm d}}

\def\prob{{\mathbb P}}

\def\<{\langle}
\def\>{\rangle}

\def\cL{{\cal L}}

\def\cD{{\cal D}}

\def\b0{{\boldsymbol{0}}}

\def\spec{{\mathrm {spec}}}

\def\de{{\mathrm {d}}}

\def\cD{{{\mathcal D}}}

\renewcommand{\b}{\mathbf{b}}

\def\lt{\left}
\def\rt{\right}

\def\bbC{{\mathbb{C}}}
\def\bbE{{\mathbb{E}}}

\def\bbP{{\mathbb{P}}}
\def\bbPh{{\hat{\mathbb{P}}}}
\def\bbR{{\mathbb{R}}}

\def\cF{{\mathcal{F}}}

\def\cQ{{\mathcal{Q}}}

\def\cL{{\mathcal{L}}}

\newcommand{\norm}[1]{{\lt\|#1\rt\|}}

\newcommand{\ip}[2]{\left\langle #1,#2\right\rangle}